\newtheorem*{ithm}{Theorem}
\newtheorem*{thma}{Theorem A}
\newtheorem*{conj}{Conjecture}
\newcounter{thm}[section]\setcounter{thm}{1}
\renewcommand{\thethm} {\arabic{section}.\arabic{thm}}
\newenvironment{thm}{\par\medskip\noindent\refstepcounter{thm}
\bgroup{\hspace*{-0.15 cm}\bf{Theorem}
\thethm.}\bgroup\it}{\egroup \egroup\par\medskip}
\newenvironment{lemma}{\par\medskip\noindent\refstepcounter{thm}
\bgroup{\hspace*{-0.15 cm}\bf{Lemma} \thethm.}\bgroup\it}{\egroup
\egroup\par\medskip}
\newenvironment{prop}{\par\medskip\noindent\refstepcounter{thm}
\bgroup{\hspace*{-0.15 cm}\bf{Proposition}
\thethm.}\bgroup\it}{\egroup \egroup\par\medskip}
\newenvironment{cor}{\par\medskip\noindent\refstepcounter{thm}
\bgroup{\hspace*{-0.15 cm}\bf{Corollary}
\thethm.}\bgroup\it}{\egroup \egroup\par\medskip}
\newenvironment{define}{\par\medskip\noindent\refstepcounter{thm}
\bgroup{\hspace*{-0.15 cm}\bf{Definition}
\thethm.}\bgroup}{\egroup \egroup\par\medskip}
\newenvironment{rem}{\par\medskip\noindent\refstepcounter{thm}
\bgroup{\hspace*{-0.15 cm}\bf{Remark} \thethm.}\bgroup}{\egroup
\egroup\par\medskip}
\newcounter{Athm}[section]\setcounter{Athm}{1}
\renewcommand{\theAthm} {\arabic{Athm}}
\newenvironment{Athm}{\par\medskip\noindent\refstepcounter{Athm}
\bgroup{\hspace*{-0.15 cm}\bf{Theorem}
A.\theAthm.}\bgroup\it}{\egroup \egroup\par\medskip}
\newenvironment{Aprop}{\par\medskip\noindent\refstepcounter{Athm}
\bgroup{\hspace*{-0.15 cm}\bf{Proposition}
A.\theAthm.}\bgroup\it}{\egroup \egroup\par\medskip}
\newenvironment{Adefine}{\par\medskip\noindent\refstepcounter{Athm}
\bgroup{\hspace*{-0.15 cm}\bf{Definition}
A.\theAthm.}\bgroup}{\egroup \egroup\par\medskip}
\newenvironment{Arem}{\par\medskip\noindent\refstepcounter{Athm}
\bgroup{\hspace*{-0.15 cm}\bf{Remark} A.\theAthm.}\bgroup}{\egroup
\egroup\par\medskip}
\long\def\symbolfootnote[#1]#2{\begingroup%
\def\thefootnote{\fnsymbol{footnote}}\footnote[#1]{#2}\endgroup} 
\newcommand{\LL}{\Lambda}
\newcommand{\QQ}{\mathbb{Q}}
\newcommand{\FF}{\mathcal{F}}
\newcommand{\oo}{\mathcal{O}}
\newcommand{\FFc}{\mathcal{F}_{\textup{\lowercase{can}}}}
\newcommand{\lra}{\longrightarrow}
\newcommand{\ZZ}{\mathbb{Z}}
\newcommand{\PP}{\mathcal{P}}
\newcommand{\NN}{\mathcal{N}}
\newcommand{\ra}{\rightarrow}	
\newcommand{\xx}{\mathbf{X}}
\newcommand{\be}{\begin{equation}}
\newcommand{\ee}{\end{equation}}
\newcommand{\sss}{\scriptsize}
\newcommand{\XX}{\mathcal{X}}
\newcommand{\mm}{\hbox{\frakfamily m}}
\newcommand{\KS}{\textbf{\textup{KS}}}
\newcommand{\length}{\textup{length}}
\newcommand{\tk}{\tilde{K}}
\newcommand{\tH}{\tilde{H}}
\newcommand{\tgamma}{\tilde{\gamma}}
\newcommand{\tlambda}{\tilde{\Lambda}}
\newcommand{\tGamma}{\tilde{\Gamma}}
\newcommand{\fontaineO}{\mathcal{O}_{\widehat{\varepsilon^{\textup{nr}}}}}
\newcommand{\fontaineOK}{\mathcal{O}_{\varepsilon(K)}}
\newcommand{\htam}{$\mathbf{\mathbb{H}.T}$}
\newcommand{\hsez}{$\mathbf{\mathbb{H}.sEZ}$}
\newcommand{\hez}{$\mathbf{\mathbb{H}.EZ}$}
\newcommand{\unr}{\textup{unr}}
\newcommand{\Gal}{\textup{Gal}}
\newcommand{\Fr}{\textup{Fr}}
\newcommand{\TT}{\mathcal{T}}
\newcommand{\hone}{$\mathbf{H.1}$}
\newcommand{\hthree}{$\mathbf{H.3}$}
\newcommand{\hfour}{$\mathbf{H.4}$}
\newcommand{\xiv}{\mbox{\boldmath$\xi$}}
\newcommand{\FFl}{\mathcal{F}_{\mathcal{L}}}
\newcommand{\LLL}{\mathbb{L}_{\infty}}
\newcommand{\hsezF}{$\mathbf{\mathbb{H}.sEZ}_{/F}$}
\newcommand{\htamF}{$\mathbf{\mathbb{H}.T}_{/F}$}
\newcommand{\Ll}{\mathcal{L}}
\newcommand{\kk}{\mathcal{K}}
\def\begin{tabbing}
\addsymbol \mathcal{D}_\ell: {A fixed decomposition group at $\ell$}{symbol:Dec}
\addsymbol \mathcal{I}_\ell: {Inertia subgroup}{symbol:Inert}
\addsymbol \textup{Fr}_\ell: {Frobenius element}{symbol:frob}
\addsymbol \mathcal{S}: {Simplicial sheaf}{symbol:calS}
\addsymbol \mathcal{H}: {Selmer sheaf}{symbol:calH}
\addsymbol \overline{\KS}: {$\LL$-module $\LL$-adic Kolyvagin systems}{symbol:KS}
\addsymbol \mathbf{c}^\rho: {Cyclotomic unit Euler system}{symbol:crho}
\addsymbol \pmb{\kappa}^{\rho,\infty}: {Cyclotomic unit $\LL$-adic Kolyvagin system}{symbol:krhoinfty}
\addsymbol \pmb{\kappa}^{\rho}: {Cyclotomic unit Kolyvagin system}{symbol:krho}
\addsymbol \pmb{\kappa}^{\textup{Kato},\infty}: {Kato's $\LL$-adic Kolyvagin system}{symbol:katoinfty}
\addsymbol \pmb{\kappa}^{\textup{Kato}}: {Kato's Kolyvagin system}{symbol:kato}
\addsymbol \mathbf{\xiv}_r: {Perrin-Riou's conjectural cohomology class}{symbol:xi}
\addsymbol \pmb{\kappa}^{\textup{PR}}: {$\LL$-adic Kolyvagin system derived from Perrin-Riou's conjectural classes}{symbol:PR}
\addsymbol \pmb{\kappa}^{\textup{Stark}}: {Kolyvagin system derived from Rubin-Stark elements}{symbol:krs}
\addsymbol \pmb{\kappa}^{\textup{pr},\infty}: {$\LL$-adic Kolyvagin system for $\FFc$ obtained from Rubin-Stark elements}{symbol:prrs}
\addsymbol \mathcal{D}_\ell: {A fixed decomposition group at $\ell$}{symbol:Dec}
\addsymbol \mathcal{I}_\ell: {Inertia subgroup}{symbol:Inert}
\addsymbol \textup{Fr}_\ell: {Frobenius element}{symbol:frob}
\addsymbol \mathcal{S}: {Simplicial sheaf}{symbol:calS}
\addsymbol \mathcal{H}: {Selmer sheaf}{symbol:calH}
\addsymbol \overline{\KS}: {$\LL$-module $\LL$-adic Kolyvagin systems}{symbol:KS}
\addsymbol \mathbf{c}^\rho: {Cyclotomic unit Euler system}{symbol:crho}
\addsymbol \pmb{\kappa}^{\rho,\infty}: {Cyclotomic unit $\LL$-adic Kolyvagin system}{symbol:krhoinfty}
\addsymbol \pmb{\kappa}^{\rho}: {Cyclotomic unit Kolyvagin system}{symbol:krho}
\addsymbol \pmb{\kappa}^{\textup{Kato},\infty}: {Kato's $\LL$-adic Kolyvagin system}{symbol:katoinfty}
\addsymbol \pmb{\kappa}^{\textup{Kato}}: {Kato's Kolyvagin system}{symbol:kato}
\addsymbol \mathbf{\xiv}_r: {Perrin-Riou's conjectural cohomology class}{symbol:xi}
\addsymbol \pmb{\kappa}^{\textup{PR}}: {$\LL$-adic Kolyvagin system derived from Perrin-Riou's conjectural classes}{symbol:PR}
\addsymbol \pmb{\kappa}^{\textup{Stark}}: {Kolyvagin system derived from Rubin-Stark elements}{symbol:krs}
\addsymbol \pmb{\kappa}^{\textup{pr},\infty}: {$\LL$-adic Kolyvagin system for $\FFc$ obtained from Rubin-Stark elements}{symbol:prrs}
\def\addsymbol #1: #2#3{$#1$ \> \parbox{5in}{#2 \dotfill \pageref{#3}}\\}
\def\newnot#1{\label{#1}} 
\begin{document}
\title{$\Lambda$-adic Kolyvagin Systems}

\author{K\^az\i m B\"uy\"ukboduk}

\address{Kazim Buyukboduk \hfill\break\indent
Max Planck Institut f\"ur Mathematik\\
\hfill\break\indent Vivatsgasse 7 \hfill\break\indent 53111 Bonn
\hfill\break\indent Germany} \email{{\tt kazim@math.stanford.edu}}

\curraddr{
\hfill\break\indent Ko\c{c} University, Mathematics \hfill\break\indent Rumeli Feneri Yolu \hfill\break\indent 34450 Sar\i yer / \.Istanbul
\hfill\break\indent Turkey}

\keywords{Iwasawa theory, Kolyvagin Systems.}
\subjclass[2000]{Primary 11F80, 11G40; Secondary 11F85, 11R23, 11R34, 11R42}

\begin{abstract}

In this paper we study Kolyvagin Systems, as defined by Mazur and Rubin, over the cyclotomic $\ZZ_p$-tower for a Gal$(\overline{\QQ}/\QQ)$-representation $T$. We prove, under certain hypotheses, that the module of $\LL$-adic Kolyvagin Systems for the \emph{cyclotomic deformation} $T\otimes\LL$ is free of rank one over the cyclotomic Iwasawa algebra. We link our result with a web of conjectures due to Perrin-Riou and Rubin; and we relate the $\LL$-adic Kolyvagin Systems we prove to exist to (conjectural) $p$-adic $L$-functions. We also study the Iwasawa theory of Rubin-Stark elements via the perspective offered by our main theorem, and outline a strategy to deduce the main conjectures of Iwasawa theory for totally real number fields assuming the Rubin-Stark conjecture.
\end{abstract}

\maketitle
\tableofcontents
\section{Introduction}
Kolyvagin's machinery of Euler Systems is a powerful tool to bound the order of Selmer groups. It takes an Euler System (which is a collection of cohomology classes  with certain coherence relations) as an input, and produces, using a descent argument, the so called derivative classes. These classes give the bounds we seek for.

There is of course a considerable cost for such a useful tool: Finding an Euler System is an extremely difficult task. There are only a few Euler systems known to exist, all of which with great importance, and yet we fail to know if there should be Euler Systems in any other setting than these few number of instances.

However, one may answer an existence problem for the derivative classes. In~\cite{mr02}, Mazur and Rubin isolate the notion of a \emph{Kolyvagin system}, which has the same formal properties that Kolyvagin's derivative classes have, except that the {Kolyvagin Systems} enjoy some other local conditions than utilized before. This new property adds to the level of rigidity, and one  gains  control over the size of the module of Kolyvagin Systems.

Fix a finite extension $\Phi$\newnot{symbol:ff} of $\QQ_p$, and let $\oo$\newnot{symbol:coeff} be its ring of integers, with $\mm$\newnot{symbol:maxidealO} its maximal ideal, $\varpi$\newnot{symbol:uniformizerO} its uniformizer, and $\mathbb{F}=\oo/\mm$ its residue field. For any field $K$ with a fixed separable closure $\overline{K}$, write $G_{K}=\textup{Gal}(\overline{K}/K)$\newnot{symbol:Gal}. Let $T$\newnot{symbol:T} be a free $\oo$-module of finite rank, endowed with a continuous $G_{\QQ}$-action which is unramified outside finitely many primes. Let $\FF$  denote a Selmer structure on $T$ and $\Sigma(\FF)$ a finite set of places of $\QQ$ that contains the infinite place, the prime $p$ and all primes where $T$ is ramified (see Definition 2.1.1 of~\cite{mr02} for a definition of a \emph{Selmer structure}) and let $\PP$ denote a set of rational primes disjoint from $\Sigma(\FF)$ which we will occasionally refer to as \emph{Kolyvagin primes}.  Let also $\chi(T,\FF)$\newnot{symbol:coreselmer} denote the core Selmer rank of the Selmer structure $\FF$ on $T$, as in \cite[Definition 4.1.11]{mr02}.

The following result is proved in \cite{mr02} (Corollary 4.5.1 and Corollary 4.5.2) under suitable hypotheses:

\begin{ithm}[Howard, Mazur, Rubin]
Let $\mathbf{KS}(T/\mm^kT,\FF,\PP)$ be the module of Kolyvagin Systems over $\oo/\mm^k$, with $k \in \ZZ^+$. Then
\begin{itemize}
\item[(i)] If $\chi(T,\FF)=0$, then $\mathbf{KS}(T/\mm^kT,\FF,\PP)=0$.
\item[(ii)] If  $\chi(T,\FF)=1$, then $\mathbf{KS}(T/\mm^kT,\FF,\PP)$ is free of rank one as an $\oo/\mm^k$-module.
\item[(iii)] If $\chi(T,\FF)>1$, then for every $r$, the $\oo/\mm^k$-module $\mathbf{KS}(T/\mm^kT,\FF,\PP)$ contains a free submodule of rank $r$.
\end{itemize}

\end{ithm}

Therefore, we have an answer for when one could hope to apply Kolyvagin's machinery, at least for the residual representations $T/\mm^kT$. Furthermore, when $\chi(T,\FF)=1$, one may go further and use the result above to obtain Kolyvagin systems for the representation $T$ itself (and not only for its quotients $T/\mm^kT$). These Kolyvagin systems proved to exist essentially have the same use as the derivative classes obtained from an Euler  System.

Mazur and Rubin also show ({c.f.}, \cite[Theorem 5.3.3]{mr02}) how to obtain Kolyvagin Systems over the cyclotomic Iwasawa algebra starting from an Euler System (in the sense of~\cite{r00}). Using these Kolyvagin systems over the cyclotomic tower, one \emph{expects} to prove one divisibility in main conjectures of Iwasawa Theory\footnote{These Kolyvagin system which we also prove to exist in this article without assuming the existence of Euler systems, have a priori no relation with the $L$-values. Conjecturally, the classes that we prove to exist here should be related to the relevant $p$-adic $L$-function via an independent recipe due in full generality to Perrin-Riou~\cite{pr}. See Section~\ref{applications} below for further details.}. Once again, one may wonder if there should exist Kolyvagin Systems over an Iwasawa algebra in general. This is what we answer in this paper. Before we describe our main results, we introduce some more notation.

Let $T^*=\textup{Hom}(T,\Phi/\oo(1))$\newnot{symbol:T*}  be the Cartier dual of $T$ and let $A=T\otimes_{\oo}\Phi/\oo$\newnot{symbol:A}. Let also $\mathcal{I}_{\ell}$ denote a fixed inertia subgroup of $G_{\QQ}$ at $\ell$. Let $\QQ_{\infty}$\newnot{symbol:qinfty} be the cyclotomic $\ZZ_p$-extension of $\QQ$, and $\Gamma:=\Gal(\QQ_\infty/\QQ)$ be its Galois group. As usual, $\LL:=\oo[[\Gamma]]$ is the cyclotomic Iwasawa algebra.

 Let $\FFc$ denote the canonical Selmer structure on $T\otimes \Lambda$ (see Section~\ref{prelim} below), and let $\chi(T)=\chi(T,\FFc)$ denote the core Selmer rank of the canonical Selmer structure $\FFc$ on $T$ (see \cite[Definition 4.1.11 and Theorem 5.2.15]{mr02} for a definition).  Let $\overline{\mathbf{KS}}(T\otimes\Lambda\,,\FFc,\PP)$ denote the $\LL$-module of Kolyvagin systems for $\FFc$ on $T\otimes \LL$ (defined as in \S\ref{KS(Lambda)}; see also Remark~\ref{comparison-KS} for a comparison of this definition to \cite[Definitions 3.1.3 and 3.1.6]{mr02}). Define also $\overline{\mathbf{KS}}(T,\FFc,\PP)$ to be the $\oo$-module of Kolyvagin systems for $\FFc$ on $T$ (which has been extensively  studied in~\cite[\S5.2]{mr02}, see particularly~\cite[Theorem 5.2.12]{mr02}).

The main technical result of this paper (Theorem~\ref{main})  shows that the Kolyvagin Systems exist over the cyclotomic Iwasawa algebra under certain technical hypotheses:
\begin{thma}
\label{lifting}
Assume the hypotheses in \S \ref{subsec:hypo} and that $\chi(T)=1$. Then:
\begin{enumerate}
\item[\textbf{(i)}] The $\LL$-module $\overline{\mathbf{KS}}(T\otimes\Lambda,\FFc,\PP)$ is free of rank one.
\item[\textbf{(ii)}] The specialization map
$$\overline{\mathbf{KS}}(T\otimes\Lambda,\FFc,\PP) \lra \overline{\mathbf{KS}}(T,\FFc,\PP)$$
 is surjective.
\end{enumerate}
\end{thma}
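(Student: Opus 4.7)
The plan is to deduce Theorem~A from the finite-level structure theorem of Mazur-Rubin via a Mittag-Leffler limit argument along the tower of finite Artinian quotients of $\LL$. The guiding principle is that the canonical Selmer structure $\FFc$ is engineered precisely so that the core Selmer rank is preserved under the cyclotomic deformation, which allows the rigidity theorem at each finite level to be threaded together into a $\LL$-adic rigidity statement.

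Concretely, fix a topological generator $\gamma$ of $\Gamma$, set $\omega_n := \gamma^{p^n}-1$, and let $R_{n,k} := \LL/(\omega_n, p^k)$; each $R_{n,k}$ is a finite principal local ring with residue field $\mathbb{F}_p$, and $\LL = \varprojlim_{n,k} R_{n,k}$. For each pair $(n,k)$, the representation $T \otimes R_{n,k}$ carries the canonical Selmer structure inherited from $T \otimes \LL$. The first main step is to establish that $\overline{\KS}(T\otimes R_{n,k}, \FFc)$ is a free $R_{n,k}$-module of rank one. Since the residual representation of $T\otimes R_{n,k}$ agrees with that of $T$, the hypotheses of \S\ref{subsec:hypo} transfer directly, and it remains to verify the core rank hypothesis $\XX(T\otimes R_{n,k}, \FFc) = 1$. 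Granting this, the conclusion follows from the Mazur-Rubin structure theorem for Kolyvagin systems over principal Artinian local rings (\cite{mr02} Corollary~4.5.2).

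With the finite-level result in hand, part~(i) is obtained by passing to the inverse limit. The projection $R_{n+1,k+1}\twoheadrightarrow R_{n,k}$ induces a transition map of free rank-one modules which is a surjection once generators are chosen coherently, and coherent generators are built level-by-level by successively lifting and invoking freeness at the previous step. Hence the inverse system satisfies Mittag-Leffler, and $\overline{\KS}(T\otimes \LL, \FFc) = \varprojlim_{n,k} \overline{\KS}(T\otimes R_{n,k}, \FFc)$ is free of rank one over $\LL$. For~(ii), the specialization map is the projection to the $n=0$ subsystem, and its surjectivity is again a consequence of Mittag-Leffler applied to the surjective finite-level transition maps (the target $\overline{\KS}(T,\FFc) = \varprojlim_k \overline{\KS}(T/p^k T,\FFc)$ is itself an inverse limit of the $n=0$ slice).

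The main obstacle is the stability of the core Selmer rank, i.e., the equality $\XX(T\otimes R_{n,k}, \FFc) = 1$ for all $(n,k)$. Morally this is the defining feature of the canonical Selmer structure, but rigorously it requires a careful analysis of how the local conditions at primes dividing $p$ interact with the quotient $R_{n,k}$, together with a Poitou-Tate duality computation that controls the changes in Selmer ranks along the tower. Secondary technical issues include the coherent choice of Kolyvagin-prime sets $\PP$ across levels so that the transition maps genuinely carry Kolyvagin systems to Kolyvagin systems, and the verification that the global Euler characteristic formulas balance correctly so as to pin down the core rank exactly at one rather than merely bound it from below.
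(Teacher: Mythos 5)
Your proposal contains a genuine gap that the paper must, and does, work hard to close. You claim that each quotient $R_{n,k}=\LL/(\omega_n,p^k)$ with $\omega_n=\gamma^{p^n}-1$ is a finite \emph{principal} local ring, and then you invoke the Mazur--Rubin structure theorem (\cite{mr02} Corollary~4.5.2) at each finite level. But $R_{n,k}$ is principal only in two degenerate directions of the tower: for $n=0$ (where $R_{0,k}=\ZZ/p^k\ZZ$) and for $k=1$ (where $R_{n,1}\cong\mathbb{F}_p[\xx]/(\xx^{p^n})$). For $n\geq1$ and $k\geq2$ the maximal ideal $(p,\gamma-1)$ of $R_{n,k}\cong(\ZZ/p^k\ZZ)[\gamma]/(\gamma^{p^n}-1)$ is not principal: reducing mod $\gamma-1$ shows $p\notin(\gamma-1)$, and reducing mod $p$ shows $\gamma-1\notin(p)$. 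The Mazur--Rubin rigidity theorem is proved only for the principal Artinian ring $\ZZ/p^k\ZZ$, so it cannot be cited at the generic finite level, and the whole inverse-limit scaffolding of your argument loses its base case.

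This is precisely the hard part of the paper. Rather than passing directly to the structure theorem, the paper works with $R_{k,m}=\LL/(p^k,(\gamma-1)^m)$ and re-establishes the rigidity result for these two-parameter, \emph{non-principal} Artinian quotients from scratch. That requires: (a) proving the canonical Selmer structure (and its transverse modification $\FFc(n)$) is \emph{cartesian} on the family $\{T_{k,m}\}$ (Propositions~\ref{prop:cart}, \ref{cart-at-p}, \ref{cart-tr}), which is where the hypotheses $\mathbf{\mathbb{H}.T}$ and $\mathbf{\mathbb{H}.sEZ}$ enter; (b) constructing core vertices $n\in\NN_{k,m}$ with $H^1_{\FFc(n)}(\QQ,T_{k,m})$ free of rank one over $R_{k,m}$ (Theorem~\ref{thm1}), which relies on a $\ZZ_p$-length computation (Lemma~\ref{lem:lower}) that uses $\mathbf{\mathbb{H}.sEZ}$ to kill $\textup{corank}_{\ZZ_p}H^0(\QQ_p,T_m^*)$ and thereby pin the Euler characteristic to exactly $km$; and (c) redoing Howard's Appendix~B argument to show $\mathbf{KS}(T_{k,m},\PP_{k,m})\cong H^1_{\FFc(n)}(\QQ,T_{k,m})$ for such $n$ (Corollary~\ref{KolSys}). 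Only after all this does one pass to the limit (Lemmas~\ref{lem:j}, \ref{lem:surj}). Your ``main obstacle'' paragraph correctly senses that the core-rank stability is delicate, but the obstacle is in fact worse than you allow, and the remedy is not a Poitou--Tate tweak on top of Mazur--Rubin's theorem but a full re-proof of it for a class of rings their theorem does not cover.
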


Any element of the module  $\overline{\mathbf{KS}}(T\otimes\Lambda,\FFc,\PP)$ will be called a \emph{$\LL$-adic Kolyvagin system}.

Before we explain our strategy to prove the theorem above, we first discuss its consequences in several aspects. When $T=\oo(1)\otimes \rho^{-1}$, where $\rho$ is an even Dirichlet character, the Kolyvagin system that Theorem~A proves to exist comes from the cyclotomic unit Euler system (via the Euler systems to Kolyvagin systems map of Mazur and Rubin, {c.f.}, \cite[Theorem 5.3.3]{mr02}). Our theorem further shows that the Kolyvagin system which arises from the cyclotomic unit Euler system is the ``best possible'' in this setting. See Proposition~\ref{prop:cycloprimitive} below for more details.

Consider now an elliptic curve $E_{/\QQ}$  without CM such that
\begin{itemize}
\item[(E1)] $p$ is coprime to all its Tamagawa factors,
\item[(E2)] $p$ is not anomalous for $E$ (in the sense of Mazur~\cite{mazur-anom}),
\end{itemize}
and let $T=T_p(E)$ be the $p$-adic Tate module.  In this case,  the $\LL$-adic Kolyvagin system Theorem~A proves to exist comes from Kato's~\cite{ka1} Euler system (again via the Euler systems to Kolyvagin systems map of Mazur and Rubin). And again our theorem shows (assuming the Birch and Swinnerton-Dyer conjecture) that the Kolyvagin system which arises from Kato's Euler system is the ``best possible'' in this setting. See Proposition~\ref{prop:katoprimitive} below for further details.

Besides these now-classical examples mentioned above, we also discuss several other arithmetic applications of Theorem~A. As the first application, we study in \S\ref{p-adicL-func} the relation of $\LL$-adic Kolyvagin systems with $p$-adic $L$-functions, using the rigidity offered by our main result. 
Rubin~\cite[\S8]{r00} (see also \S\ref{p-adicL-func} below) constructs an Euler system starting from Perrin-Riou's~\cite{pr2} conjectural $p$-adic $L$-functions. This conjectural Euler system gives rise to a $\Lambda$-adic Kolyvagin system using the Euler system to Kolyvagin system map of Mazur and Rubin \cite[Theorem 5.3.3]{mr02}  and the first part of Theorem~A implies  that the cohomology classes that we prove to exist in this article should be related (up to multiplication by an element of $\Lambda$) to the cohomology classes that Perrin-Riou~\cite[\S4.4]{pr2} and Rubin~\cite[Conjecture 8.2.6]{r00} predict to exist. Thus, $\LL$-adic Kolyvagin systems we construct here relate to Perrin-Riou's conjectural $p$-adic $L$-functions. On the other hand, one may also think of Theorem~A as an evidence (albeit quite weak) for the conjectures of Perrin-Riou  and Rubin; as it proves a consequence of these conjectures, namely the existence of $\LL$-adic Kolyvagin systems.

As a further application, we study in \S\ref{sec:stark-iwasawa} the Iwasawa theory of Rubin-Stark elements from the perspective offered by Theorem~A. Let $F$ be a totally real number field, and $r=[F:\QQ]$. Let $\rho$ be a totally even character of $G_F$ (i.e., it is trivial on all complex conjugations inside $G_F$) into $\oo^\times$ that has finite prime-to-$p$ order, and let $f_{\rho}$ be its conductor. We assume that $(p,f_{\rho})=1$ and $\rho(\wp)\neq1$ for any prime $\wp$ of $F$ lying over $p$. Assume also for notational simplicity that $p$ is unramified in $F/\QQ$. Let $T=\oo(1)\otimes\rho^{-1}$, but now considered as a $G_F$-representation. The main difficulty in this setting is that the core Selmer rank of the canonical Selmer structure is greater than one. We show first how to overcome this difficulty by constructing an auxiliary  Selmer structure (denoted by $\FF_{\LLL}$ in \S\ref{modified selmer}), by modifying the local conditions at $p$. For the construction of $\FF_{\LLL}$, we utilize a structure theorem for the semi-local cohomology group $H^1(F_p,T\otimes\LL)$ due to Perrin-Riou (see Appendix A). Once the auxiliary  Selmer structure is constructed, we apply Theorem~A to show the existence of $\LL$-adic Kolyvagin systems for these modified Selmer structures. On the other hand, Rubin~\cite[\S6]{ru96} shows that the Rubin-Stark elements give rise to an Euler system in this setting. If we apply Mazur and Rubin's {Euler systems to Kolyvagin systems map}, we \emph{only} obtain a $\LL$-adic Kolyvagin system for the canonical Selmer structure $\FFc$ on $T\otimes\LL$, which is coarser than our modified Selmer structure $\FF_{\LLL}$; and the techniques of~\cite{mr02} do not apply. We discuss how to obtain a  $\LL$-adic Kolyvagin system
 for the modified Selmer structure $\FF_{\LLL}$ starting with the Rubin-Stark elements, based on the methods of~\cite{kbbstark}, and how to deduce the main conjectures using these $\LL$-adic Kolyvagin systems.  In a forthcoming work~\cite{kbb-iwasawa}, we extend the methods of~\cite{kbbstark} to verify all the speculative claims in \S\ref{sec:stark-iwasawa}.

In a forthcoming work~\cite{kbb-stick}, the author gives another important arithmetic application of the first part of Theorem A (i.e., the rigidity of $\LL$-adic Kolyvagin systems) which we do not present here: He describes a relation between the Stickelberger elements and the Rubin-Stark elements for totally real fields. The existence of the Rubin-Stark elements is conjectural, whereas the existence of the Euler system of Stickelberger elements constructed in~\cite{kbb-stick} (see also~\cite{kurihara} for a construction of an Euler system using Stickelberger elements in a different way) rely only on a special case of Brumer's conjecture, which is proved in a variety of cases (c.f., \cite{wiles-brumer, kurihara, greither}) from Wiles' proof\footnote{The author has been warned that there might be a gap in Wiles' proof of Brumer's conjecture and his proof of the main conjecture for totally real fields in~\cite{wiles-mainconj}. He was also told that Buzzard and Taylor are able to mend this problem by making use of the recent developments in the theory of $p$-adic Hilbert modular forms.} of the main conjectures~\cite{wiles-mainconj}.

We would also like to point out that the rigidity phenomenon for Kolyvagin systems plays a central role in a recent work of Mazur and Rubin~\cite{mrdarmon}, where they prove an important portion of Darmon's conjecture~\cite{darmonconj}.

There are three main steps in proving Theorem~A. Let us give a brief outline of these in this paragraph. Let $\gamma$ be a topological generator of $\Gamma$ and let $R_{k,m}$\newnot{symbol:rkm} be the artinian ring $\LL/(\mm^k,(\gamma-1)^k)$ with $k,m \in \ZZ^+$. The main idea of the current article is to consider ``non-arithmetic specializations" (as Ochiai~\cite{ochiai-nearly-ordinary} also calls them) $T_{k,m}:=T\otimes R_{k,m}$ of the \emph{big} Galois representation $T\otimes\LL$. The first step is to choose (see \S\ref{primes}) a collection of \emph{Kolyvagin primes} $\PP_{k,m}$ for each $G_\QQ$-representation $T_{k,m}$. The second step is the construction (see \S\ref{subsec:core}) of what Mazur and Rubin call the \textit{core vertices} for the canonical Selmer structure $\FF_{\textup{can}}$ on $T_{k,m}$. This construction builds on the ideas developed in~\cite[\S4.1]{mr02}, however, the situation in this paper is technically more involved as one has to deal with two-dimensional artinian coefficient rings $R_{k,m}$, whereas Mazur and Rubin give a construction of core vertices for principal artinian coefficient rings. To achieve this, we use an Iwasawa theoretic control mechanism in order to give an upper bound on the size of certain Selmer groups (see Lemma~\ref{lem:applycart} and Proposition~\ref{lem:upper}) and an argument based on global duality to give a lower bound on the size of these Selmer groups (see Proposition~\ref{lem:lower}). 
In the final step, we prove (see \S\ref{KS1}) mostly following Howard's arguments in~\cite[Appendix B]{mr02} that there is a canonical isomorphism from the $R_{k,m}$-module  $\mathbf{KS}(T_{k,m},\FFc , \PP_{k,m})$  onto the module $H^1 _{\FFc(n)}(\QQ, T_{k,m})$, for any core vertex $n$. Furthermore, these isomorphisms are compatible with the base change maps $T_{k,m} \ra T_{k^{\prime}, m^{\prime}}$, which allows us to patch them together and construct (in \S\ref{KS(Lambda)}) the sought after classes inside $H^1 (\QQ, T\otimes\Lambda)$.

Theorem~A, particularly the second part, can be interpreted as a deformation theoretical result: We prove that a Kolyvagin system for the $\oo$-representation $T$ can be deformed to a $\LL$-adic Kolyvagin system for the \emph{cyclotomic deformation} $T\otimes\LL$ of $T$. Although we restrict our attention to the case of cyclotomic deformations of Galois representations  in this paper, one hopes that our formalism would apply in a similar way for other deformations of $T$. 
For example, consider an elliptic curve $E$ defined over $\QQ$ and let
$T=T_p(E)$ be its $p$-adic Tate module. In this setting, Kato has
constructed an Euler system, which gives rise to a Kolyvagin system
for the cyclotomic deformation of $T$. Ochiai~\cite{ochiai-nearly-ordinary} proved that this
Euler system may be further deformed to a \emph{big} Euler system  for
the nearly ordinary deformation of $T$ under certain technical
hypotheses (most notable of which is the assumption that the
deformation ring in question is a power series ring). An appropriate
generalization of Theorem A (and the work of Ochiai) should say
similarly that for a general Galois representation $T$, the Kolyvagin
systems which Mazur and Rubin~\cite{mr02} prove to exist may be deformed to
Kolyvagin systems for various types of deformations of $T$.

\section{Core Vertices over $\mathbf{\Lambda}$}
\label{sec:Core}

\subsection{Preliminaries}
\label{prelim}
Let $T$ be a free $\oo$-module of finite rank, on which the absolute Galois group $G_{\QQ}$ acts continuously, and let $V=T \otimes \Phi$\newnot{symbol:V}, and $A=V/T$.
We will denote $G_{\QQ_{\ell}}$ by $\mathcal{D}_{\ell}$\newnot{symbol:Dec} whenever we wish to identify this group with a closed subgroup of $G_{\QQ}$; namely with a particular decomposition group at $\ell$. We further define $\mathcal{I}_{\ell} \subset \mathcal{D}_{\ell}$\newnot{symbol:Inert} to be the inertia group and $\textup{Fr}_{\ell} \in \mathcal{D}_{\ell}/\mathcal{I}_{\ell}$\newnot{symbol:frob} to be the arithmetic Frobenius element  at $\ell$. We also write $\QQ_{\ell}^{\textup{unr}} \newnot{symbol:maxunr}\subset \overline{\QQ}_{\ell}$ for the maximal unramified subfield of $\overline{\QQ}_{\ell}$.

Let $\pmb{\mu}_{p^n} \subset \overline{\QQ}$ be the $p^n$-th roots of unity  and $\pmb{\mu}_{p^{\infty}}=\varinjlim_n \pmb{\mu}_{p^n}$. For any ring $R$, an ideal $\mathfrak{a}\subset  R$ and any $R$-module $M$, write $M[\mathfrak{a}]$ for the submodule killed by the elements of $\mathfrak{a}$.

Let $\LL := \oo[[\Gamma]]$\newnot{symbol:LL}, with $\Gamma= \hbox{Gal}(\QQ _{\infty}/ \QQ)$\newnot{symbol:Gamma} and $\QQ_\infty$ the cyclotomic $\ZZ_p$-extension of $\QQ$. Fixing a topological generator $\gamma$ of $\Gamma$, we may identify  $\Lambda$ with the power series ring $\oo[[\gamma-1]]$ in one variable over $\oo$, and we will occasionally do so, denoting $\gamma-1$ by $\mathbf{X}$. We will consider modules $T_f := T\otimes \Lambda / (f)$\newnot{symbol:Tf} and $T_{k,f} := T \otimes \Lambda / (\mm^k,f)$\newnot{symbol:Tkf}, where $k$ is a positive integer and $f$ is an element of $\LL$ such that the quotient $\LL/(f)$ is free of non-zero finite rank over $\oo$. Such an element $f\in\LL$ will be called a \emph{distinguished power series}. We will let $G_{\QQ}$ act on the tensor products above via acting on both factors.  For $\ell \neq p$, the inertia subgroup $\mathcal{I}_{\ell}$ acts trivially on $\Lambda$, hence $T_f$ (resp., $T_{k,f}$) is ramified at $\ell$ if and only if $T$ (resp., $T/\mm^kT$) is ramified at $\ell$.

We recall a definition from~\cite[\S2]{mr02}.
\begin{define}
\label{selmer structure}
Let $M$ be any $\oo[[G_{\QQ}]]$-module. A \emph{Selmer structure} $\FF$\newnot{symbol:selmerstr} on $M$ is a collection of the following data:
\begin{itemize}
\item A finite set $\Sigma(\FF)$ of places of $\QQ$, including $\infty$, $p$, and all primes where $M$ is ramified.
\item For every $\ell \in \Sigma(\FF)$, a local condition on $M$ (which we now view as a $\ZZ_p[[\mathcal{D}_{\ell}]]$-module), i.e., a choice of an $\oo$-submodule $H^1_{\FF}(\QQ_{\ell},M) \subset H^1(\QQ_{\ell},M).$
 \end{itemize}
\end{define}

\begin{define}\label{selmer triple}
A \emph{Selmer triple} is a triple $(T,\FF,\PP)$ where $\FF$ is a Selmer structure on $T$ and $\PP$ is a set of rational primes, disjoint from $\Sigma(\FF)$.
\end{define}
Define $\FFc$\newnot{symbol:FFc}, the \emph{canonical Selmer structure}, on $T \otimes \Lambda /(f)$ as follows:
\begin{itemize}
\item  $\Sigma(\FFc)=\{\ell: T \hbox{ is ramified at } \ell\} \cup \{p,\infty\}$.

\item $$H ^{1} _{\FFc}(\QQ _{\ell}, T _f):= \left \{
\begin{array}{ccl}
	 H ^{1} (\QQ _{p} \,, T_f)& ,& \hbox{if } \ell = p,    \\
          H ^{1}_{f}(\QQ _{\ell},T_f)& ,& \hbox{if } \ell \in \Sigma(\FFc)-\{p,\infty\}.
\end{array}
\right.$$
\end{itemize}
Here $H ^{1}_{f} (\QQ _{\ell}, T_f)\newnot{symbol:finite}:= \ker\left\{ H^{1} (\QQ _{\ell} , T_f) \lra H^{1} _{\unr} (\QQ _{\ell} , T _f \otimes \Phi)\right \}$, where, for any $\oo[[G_{\QQ_{\ell}}]]$-module $M$, we define
 $$H^1_{\textup{unr}}(\QQ_{\ell},M)\newnot{symbol:unr}:=\ker\left\{H^1(\QQ_{\ell},M) \lra H^1(\QQ_{\ell}^{\textup{unr}},M)\right)=H^1(\QQ_{\ell}^{\textup{unr}}/\QQ_{\ell},M^{\mathcal{I}_{\ell}}\}$$
  as in~\cite[Definitions 1.1.6 and 3.2.1]{mr02}.

We denote the Selmer structure on the quotients $T _{k,f}$ obtained by \emph{propagating} the local conditions given by $\FFc$ on $T_f$ to $T _{k,f}$ also by $\FFc$. See~\cite[Example 1.1.2]{mr02} for a definition of the \emph{propagation} of local conditions.

The main objective of this Section is to prove, under the assumptions listed below, the existence of \emph{core vertices}
for the Selmer structure $\FFc$ on $T_{k,m} := T _{k,\textbf{X} ^m}$ \newnot{symbol:Tkm} (see Theorem~\ref{thm1}).

\begin{define}
\label{my-transverse}
Let $\pmb{\mu}_\ell$ denote group $\ell$-th roots of unity inside $\overline{\QQ}_{\ell}$. For a $\oo[[\mathcal{D}_\ell]]$-module $M$ which is finite over $\oo$, the submodule
$$H^1_{\textup{tr}}(\QQ_{\ell},M\newnot{symbol:trans}):=H^1\left(\QQ_{\ell}(\pmb{\mu}_{\ell})/\QQ_{\ell},H^0(\QQ(\pmb{\mu}_{\ell}),M)\right) \subset H^1(\QQ_{\ell},M)$$
chosen as the local condition is called the \emph{transverse condition} on $M$ at $\ell$.
\end{define}

We will often make use of the following notation from~\cite{mr02}:
\begin{define}
\label{modified selmer-1}
Let $a,b,c \in \ZZ^+$ be pairwise coprime, and assume that $c$ is not divisible by any prime in $\Sigma(\FF)$. Then we write $\FF^a_b(c)$\newnot{symbol:modsel} for the Selmer structure on a $\oo[[G_{\QQ}]]$-module $M$ given by
\begin{itemize}
\item $\Sigma(\FF^a_b(c))=\Sigma(\FF) \cup \{\ell: \ell \mid abc\}$
\item $H^{1} _{\FF^a_b(c)}(\QQ _{\ell}, M)= \left \{
\begin{array}{ccl}
	H^{1}_{\FF}(\QQ _{\ell},M)& ,& \hbox{if } \ell \in   \Sigma(\FF) \hbox{ and } \ell\nmid ab \\
	H ^{1}(\QQ _{\ell},M) & ,& \hbox{if } \ell \mid a \\
   0 & ,& \hbox{if } \ell \mid b \\
   H^ {1} _{\textup{tr}}(\QQ_{\ell}, M) & ,& \hbox{if } \ell \mid c
\end{array}
\right.$
\end{itemize}
Whenever any of $a,b,c$ equals 1, we will suppress it from the notation.
\end{define}

\subsection{Hypotheses on $T$}
\label{subsec:hypo}
We will be assuming all the hypotheses introduced in \cite[\S3.5]{mr02}, except for $\mathbf{H.5}$ and $\mathbf{H.6}$:
\begin{itemize}
\item[($\mathbf{H.1}$)] $T/\mm T$ is an absolutely irreducible $\mathbb{F} [[G_{\QQ}]]$-representation.
\item[($\mathbf{H.2}$)] There is a $\tau \in G_{\QQ}$ such that $\tau=1 \hbox{ on } \pmb{\mu} _{p ^{\infty}}$ and the $\oo$-module $T/(\tau -1)T$ is free of rank one.
\item[($\mathbf{H.3}$)] $H^{1}(\QQ (T , \pmb{\mu}_{p ^{\infty}})/\QQ, T/\mm T)=H^{1}(\QQ  (T  , \pmb{\mu} _{p ^{\infty}})/\QQ, T ^{*}[\mm]) = 0$.

Here, $\QQ(T)$ is the smallest extension of $\QQ$ such that the $G_{\QQ}$-action on $T$ factors through $\Gal(\QQ(T)/\QQ)$ and $\QQ(T,\pmb{\mu}_{p^\infty}) = \QQ(T)(\pmb{\mu}_{p^\infty})$.
\item[($\mathbf{H.4}$)] Either $\textup{Hom}_{\mathbb{F}[[G_{\QQ}]]}(T/\mm T, T ^{*}[\mm])=0$, or
 $p>4$.
\end{itemize}

In addition to the hypothesis $\mathbf{H.1}$-$\mathbf{H.4}$, we will need the following assumptions for the main results of this paper:
\begin{itemize}
\item[($\mathbf{\mathbb{H}.T}$)] \emph{Tamagawa Condition}: The $\oo$-module $A^{\mathcal{I}_{\ell}}$ is divisible for every $\ell \neq p$.
\item[($\mathbf{\mathbb{H}.sEZ}$)] \emph{Strong Exceptional Zero-like Condition}: $H^0(\QQ_p,T^*)=0$.
\end{itemize}

We will also consider the following weaker version of $\mathbf{\mathbb{H}.sEZ}$:
\begin{itemize}
\item[($\mathbf{\mathbb{H}.EZ}$)] \emph{Exceptional Zero-like Condition}: $H^0(\QQ_p,T^*)$ is finite.
\end{itemize}

We will later choose a set of rational primes $\mathcal{P} _{k,m}$ (in \S\ref{primes} below) and verify that the analogues of hypotheses $\mathbf{H.5}$ and $\mathbf{H.6}$ of \cite[\S3.5]{mr02} hold for the collection $\{\PP _{k,m}\}$ and for the Selmer structures $\FFc(n)$, for every $n\in \mathcal{N}_{k,m} := \{\hbox{square free products of primes in } \mathcal{P}_{k,m}\}.$

\subsection{Cartesian Property for $\FFc$ on $T\otimes\LL$}
\label{subsec:cartesian}
Recall that an element $f \in \LL$ is called a distinguished power series if $\LL/(f)$ is a free $\oo$-module of finite non-zero rank. Let $\mathcal{T}=\{T_{k,f}\}$ denote the collection of quotients of $T\otimes \Lambda$, where $f$ ranges over distinguished power series and $k \in \ZZ^+ \cup \{+\infty\}$ (with the convention that $T_{k,f}=T_f$ when $k=+\infty$).

The main theorems of this paper will only concern the sub-collection
$$\mathcal{T}_0=\{T_{k,\xx^m}:k \in \ZZ^+\cup\{\infty\},m \in \ZZ^+\} \subset \mathcal{T},$$
 yet we still give the proofs of certain auxiliary facts in greater generality for the sake of clarity.
 \begin{define}
\label{iwasawa-cartesian}
A local condition $\FF$ is \emph{cartesian} on $\mathcal{T}$ if it satisfies the following conditions for every prime:
\begin{enumerate}
\item[\textbf{(C1)}]  (Functoriality) If $f$ and $g$ are distinguished polynomials and $k \leq k^{\prime}$ are positive integers then
\begin{itemize}
\item[\textbf{(C1.a)}] $H^1_{\FF}(\QQ_{\ell},T_{k,f})$ is the image of $H^1_{\FF}(\QQ_{\ell},T_{k^{\prime},f})$ under the canonical map $$H^1(\QQ_{\ell},T_{k^{\prime},f}) \lra H^1(\QQ_{\ell},T_{k,f}), \hbox{ and}$$
\item[\textbf{(C1.b)}] $H^1_{\FF}(\QQ_{\ell},T_{k,f})$ is the image of $H^1_{\FF}(\QQ_{\ell},T_{k,fg})$ under the canonical map $$H^1(\QQ_{\ell},T_{k,fg}) \lra H^1(\QQ_{\ell},T_{k,f}).$$
\end{itemize}
\item[\textbf{(C2)}] (Cartesian condition over power series) If $f$ and $g$ are distinguished power series and $k \in \ZZ^+$, then $H^1_{\FF}(\QQ_{\ell},T_{k,f})$ is the inverse image of $H^1_{\FF}(\QQ_{\ell},T_{k,fg})$ under the natural map $$H^1(\QQ_{\ell},T_{k,f}) \lra H^1(\QQ_{\ell},T_{k,fg}),$$ which is induced from the injection $T_{k,f}\stackrel{[g]}{\ra}T_{k,fg}$, where $[g]$ stands for multiplication by $g$.
\item[\textbf{(C3)}] (Cartesian condition over powers of $p$) If $f$ is a distinguished power series and $k \leq k^{\prime}$ are positive integers, then $H^1_{\FF}(\QQ_{\ell},T_{k,f})$ is the inverse image of $H^1_{\FF}(\QQ_{\ell},T_{k^{\prime},f})$ under the natural map $H^1_{\FF}(\QQ_{\ell},T_{k,f}) \lra H^1_{\FF}(\QQ_{\ell},T_{k^{\prime},f})$, which is induced from the injection  $T_{k,f} \stackrel{[p^{k^{\prime}-k}]}{\lra} T_{k^{\prime},f}$, where $[p^{k^{\prime}-k}]$ is the multiplication by $p^{k^{\prime}-k}$.
\end{enumerate}
\end{define}

Condition \textbf{C1} may sometimes be replaced by the weaker:
\begin{enumerate}
\item[\textbf{(C1$^{\prime}$)}]  (weak Functoriality) If $f$ and $g$ are distinguished power series and $k \leq k^{\prime}$ are positive integers then
\begin{itemize}
\item[\textbf{(C1$^{\prime}$.a)}] $H^1_{\FF}(\QQ_{\ell},T_{k,f})$ lies inside the image of $H^1_{\FF}(\QQ_{\ell},T_{k^{\prime},f})$ under the canonical map $$H^1(\QQ_{\ell},T_{k^{\prime},f}) \lra H^1(\QQ_{\ell},T_{k,f}),$$
\item[\textbf{(C1$^{\prime}$.b)}] $H^1_{\FF}(\QQ_{\ell},T_{k,f})$ lies inside the image of  $H^1_{\FF}(\QQ_{\ell},T_{k,fg})$ under the canonical map $$H^1(\QQ_{\ell},T_{k,fg}) \lra H^1(\QQ_{\ell},T_{k,f}).$$
\end{itemize}
\end{enumerate}
which is sufficient for most of our purposes. When a Selmer structure $\FF$ satisfies \textbf{C1{$^\prime$}}, \textbf{C2} and  \textbf{C3} we still say $\FF$ is \emph{weakly cartesian} on $\TT$.

In this section, we will check the cartesian properties for $\FFc$ and for $\FFc(n)$ on certain sub-collections of $\mathcal{T}$. We remark that the local condition $\FFc$ on $T_{k,f}$ is obtained by propagating $\FFc$ defined on $T_f$. Hence  the condition \textbf{C1.a} of Definition~\ref{iwasawa-cartesian} is automatically satisfied for $\FFc$.

Since we assume $p>2$, we have $H^1(\mathbb{R},T)=0$, hence the local conditions at the infinite place will be forced to be \emph{zero}. We therefore trivially have the cartesian property at $\infty$, regardless of the choice of a Selmer structure.

\subsubsection{Cartesian property at $\ell \neq p$}
Throughout this section, $\ell$ denotes a rational prime other than $p$ and we assume $\mathbf{\mathbb{H}.T}$. Note that if $\ell \notin \Sigma(\FFc)$, then $A^{\mathcal{I}_{\ell}}=A$ and therefore $A^{\mathcal{I}_{\ell}}$ is divisible. Hence, $\mathbf{\mathbb{H}.T}$ is satisfied for primes $\ell \notin \Sigma(\FFc)$.

\begin{lemma}
\label{all-divisible}For any distinguished power series  $f \in \Lambda$,  the $\oo$-module $A_f^{\mathcal{I}_{\ell}}$ is divisible.

\end{lemma}
\begin{proof}
Since $\ell \neq p$, the inertia group $\mathcal{I}_{\ell}$ acts trivially on $\Lambda/(f)$, hence $A_f^{\mathcal{I}_{\ell}}=A^{\mathcal{I}_{\ell}}\otimes \Lambda/(f)$.
\end{proof}
Let $V_f$ denote the $\Phi$-vector space $T_f \otimes \Phi$, and $A_f$ denote $V_f/T_f$. Note that there is a natural injection $T_{k,f} \hookrightarrow A_f$; in fact, we may identify the image of $T_{k,f}$ under this injection with the $\mm^k$-torsion subgroup $A_f[\mm^k]\subset A_f$.

Recall the definition of the unramified local cohomology group
$$H^1_{\textup{unr}}(\QQ_{\ell},V_f):=\ker\{H^1(\QQ_{\ell},V_f) \lra H^1(\mathcal{I}_{\ell},V_f)\}\cong H^1(\QQ_{\ell}^{\unr}/\QQ_{\ell},V_f^{\mathcal{I}_{\ell}}).$$
We define $H^1_{{f}}(\QQ_{\ell},A_{f})$ as the image of $H^1_{\textup{unr}}(\QQ_{\ell},V_f)$ under the natural map $$H^1(\QQ_{\ell},V_f) \lra H^1(\QQ_{\ell},A_f).$$ We also define $H^1_{{f}}(\QQ_{\ell},T_{k,f})$ as the inverse image of $H^1_{{f}}(\QQ_{\ell},A_{f})$ under the map $H^1(\QQ_{\ell},T_{k,f}) \ra H^1(\QQ_{\ell},A_f)$, which is induced from the injection $T_{k,f} \hookrightarrow A_f$.

\begin{lemma}\cite[Lemma 1.3.8(i)]{r00}
\label{compare-ffc-finite}
$H^1_{\FFc}(\QQ_{\ell},T_{k,f})=H^1_{{f}}(\QQ_{\ell},T_{k,f})$.
\end{lemma}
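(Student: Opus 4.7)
The plan is to establish the chain of equalities
$$H^1_{\FFc}(\QQ_\ell, T_{k,f}) = H^1_{\unr}(\QQ_\ell, T_{k,f}) = H^1_{\textup{f}}(\QQ_\ell, T_{k,f}),$$
leveraging the divisibility of $A_f^{\mathcal{I}_\ell}$ furnished by Lemma~\ref{all-divisible}. The inclusion $H^1_{\FFc}(\QQ_\ell, T_{k,f}) \subseteq H^1_{\textup{f}}(\QQ_\ell, T_{k,f})$ is formal: any class obtained by propagating $H^1_{\textup{f}}(\QQ_\ell, T_f)$ has image in $H^1(\QQ_\ell, A_f)$ that factors through $H^1_{\unr}(\QQ_\ell, V_f)$, hence lies in $H^1_{\textup{f}}(\QQ_\ell, A_f)$ by definition.

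For the first equality $H^1_{\unr}(\QQ_\ell, T_{k,f}) = H^1_{\textup{f}}(\QQ_\ell, T_{k,f})$, I would take $\mathcal{I}_\ell$-cohomology of $0 \to T_{k,f} \to A_f \stackrel{p^k}{\to} A_f \to 0$: since $A_f^{\mathcal{I}_\ell}$ is $p$-divisible, multiplication by $p^k$ is surjective on it, so the long exact sequence yields an injection $H^1(\mathcal{I}_\ell, T_{k,f}) \hookrightarrow H^1(\mathcal{I}_\ell, A_f)$. Next, from the $\mathcal{I}_\ell$-cohomology of $0 \to T_f \to V_f \to A_f \to 0$, the connecting map $A_f^{\mathcal{I}_\ell} \to H^1(\mathcal{I}_\ell, T_f)$ has divisible image inside the finitely generated $\ZZ_p$-module $H^1(\mathcal{I}_\ell, T_f)$, and so vanishes by the Krull intersection theorem. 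This gives the surjection $V_f^{\mathcal{I}_\ell} \twoheadrightarrow A_f^{\mathcal{I}_\ell}$; applying $H^1(\QQ_\ell^{\unr}/\QQ_\ell, -)$ --- which has cohomological dimension one --- produces a surjection $H^1_{\unr}(\QQ_\ell, V_f) \twoheadrightarrow H^1_{\unr}(\QQ_\ell, A_f)$, whence $H^1_{\textup{f}}(\QQ_\ell, A_f) = H^1_{\unr}(\QQ_\ell, A_f)$. Combining this with the earlier inertia injection, a routine diagram chase yields $H^1_{\textup{f}}(\QQ_\ell, T_{k,f}) = H^1_{\unr}(\QQ_\ell, T_{k,f})$.

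For the second equality $H^1_{\FFc}(\QQ_\ell, T_{k,f}) = H^1_{\unr}(\QQ_\ell, T_{k,f})$, I would exploit the same vanishing connecting map: it gives $H^1(\mathcal{I}_\ell, T_f) \hookrightarrow H^1(\mathcal{I}_\ell, V_f)$, and the right-hand side is torsion-free, so $H^1(\mathcal{I}_\ell, T_f)[p^k] = 0$. The inertia long exact sequence of $0 \to T_f \stackrel{p^k}{\to} T_f \to T_{k,f} \to 0$ then forces an isomorphism $T_f^{\mathcal{I}_\ell}/p^k T_f^{\mathcal{I}_\ell} \stackrel{\sim}{\to} T_{k,f}^{\mathcal{I}_\ell}$. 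Passing to $H^1(\QQ_\ell^{\unr}/\QQ_\ell, -)$ converts this into a surjection $H^1_{\unr}(\QQ_\ell, T_f) \twoheadrightarrow H^1_{\unr}(\QQ_\ell, T_{k,f})$, whose image is exactly the propagated local condition $H^1_{\FFc}(\QQ_\ell, T_{k,f})$.

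The only substantive step, where Lemma~\ref{all-divisible} (and hence the Tamagawa hypothesis \htam) enters decisively, is the vanishing of the inertia connecting map $A_f^{\mathcal{I}_\ell} \to H^1(\mathcal{I}_\ell, T_f)$; everything else reduces to diagram chasing and inflation--restriction.
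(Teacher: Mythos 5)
Your argument is correct. One thing to be aware of: the paper gives no proof at all here, simply citing Lemma 1.3.8(i) of Rubin's \emph{Euler Systems}, so you are supplying the details the paper delegates to a reference. Your route is to insert $H^1_{\unr}(\QQ_\ell,T_{k,f})$ as an intermediary and show $H^1_{\FFc}=H^1_{\unr}=H^1_{\textup{f}}$, with the key input being the $p$-divisibility of $A_f^{\mathcal{I}_\ell}$ (Lemma~\ref{all-divisible}, i.e.\ the Tamagawa hypothesis \htam), which forces the inertia connecting map $A_f^{\mathcal{I}_\ell}\to H^1(\mathcal{I}_\ell,T_f)$ to vanish and makes $H^1_{\textup{f}}=H^1_{\unr}$ on both $T_f$ and $A_f$. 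This is essentially the content of the paper's own Corollary~\ref{compare-finite-unr-2} and Corollary~\ref{unr-functorial}, which the paper derives just \emph{after} citing this lemma, so your argument interleaves with those rather than standing independent of them.

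The only substantive difference from Rubin is that his Lemma~1.3.8(i) is an unconditional statement about propagation (no Tamagawa-type hypothesis is needed; the identification of the propagated condition with the preimage condition follows from the commutativity of the boundary maps for $0\to T_f\to V_f\to A_f\to 0$ and $0\to T_{k,f}\to A_f\stackrel{p^k}{\to}A_f\to 0$), whereas your proof invokes \htam. Since \htam\ is assumed throughout this section of the paper, this costs nothing here, but it does mean your argument proves a conditional version of a lemma that is in fact unconditional, and in particular would not serve as a drop-in replacement for Rubin's proof in a setting where \htam\ fails. If you wanted to match Rubin, you would argue directly from the compatibility of the two long exact sequences, rather than reducing to the unramified condition.
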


\begin{lemma} \cite[Lemma 1.3.5]{r00}
\label{compare-finite-unr-1}
For every $\ell \neq p$, the following sequences are exact:
\begin{enumerate}
\item[\textbf{(i)}] $0\lra H^{1}_{{f}}(\QQ_ {\ell},A_{f})\lra H^{1}_{\textup{unr}}(\QQ_ {\ell},A_{f})\lra \mathcal{W}_{f}/(\textup{Fr}_{\ell}-1)\mathcal{W}_{f}\lra 0$,
\item[\textbf{(ii)}] $
0\lra H^{1}_{\textup{unr}}(\QQ_ {\ell},T_{f})\lra H^{1}_{{f}}(\QQ_ {\ell},T_{f})\lra \mathcal{W}_{f}^{\textup{Fr}_{\ell}=1} \lra 0$.
\end{enumerate}
with $\mathcal{W}_{f}= A_f ^{\mathcal{I}_{\ell}}/(A_f ^{\mathcal{I}_{\ell}})_{\textup{div}}$ and $M_{\textup{div}}$ stands for the maximal divisible submodule of an $\oo$-module $M$.
\end{lemma}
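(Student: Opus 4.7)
The plan is to derive both exact sequences from a snake-lemma analysis applied to $\Fr_\ell-1$ acting on $\mathcal{I}_\ell$-invariants, starting from the standard identification
$$H^1_{\textup{unr}}(\QQ_\ell, M)\;\cong\;M^{\mathcal{I}_\ell}\big/(\Fr_\ell-1)M^{\mathcal{I}_\ell}$$
valid for any continuous $G_{\QQ_\ell}$-module $M$, obtained from Hochschild--Serre together with the topological generation of $\Gal(\QQ_\ell^{\unr}/\QQ_\ell)\cong\widehat{\ZZ}$ by $\Fr_\ell$. The key preliminary observation is that the image of $V_f^{\mathcal{I}_\ell}$ in $A_f^{\mathcal{I}_\ell}$ coincides with $(A_f^{\mathcal{I}_\ell})_{\textup{div}}$: the image is divisible because it is a quotient of a $\QQ_p$-vector space, and conversely it contains every divisible subgroup of $A_f^{\mathcal{I}_\ell}$ because the cokernel injects into $H^1(\mathcal{I}_\ell,T_f)$, all of whose divisible part is controlled by $V_f^{\mathcal{I}_\ell}$. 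This yields two short exact sequences of $\Gal(\QQ_\ell^{\unr}/\QQ_\ell)$-modules:
\begin{align*}
0 &\lra T_f^{\mathcal{I}_\ell} \lra V_f^{\mathcal{I}_\ell} \lra (A_f^{\mathcal{I}_\ell})_{\textup{div}} \lra 0, \\
0 &\lra (A_f^{\mathcal{I}_\ell})_{\textup{div}} \lra A_f^{\mathcal{I}_\ell} \lra \mathcal{W}_f \lra 0.
\end{align*}

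For (i), I would apply the snake lemma to the second sequence with endomorphism $\Fr_\ell-1$, extracting the last three terms
$$(A_f^{\mathcal{I}_\ell})_{\textup{div}}/(\Fr_\ell-1)\lra A_f^{\mathcal{I}_\ell}/(\Fr_\ell-1)\lra \mathcal{W}_f/(\Fr_\ell-1)\lra 0.$$
Unwinding the definition of $H^1_{\textup{f}}(\QQ_\ell, A_f)$ as the image of $H^1_{\textup{unr}}(\QQ_\ell, V_f) = V_f^{\mathcal{I}_\ell}/(\Fr_\ell-1)$ in $H^1_{\textup{unr}}(\QQ_\ell, A_f) = A_f^{\mathcal{I}_\ell}/(\Fr_\ell-1)$, the first short exact sequence shows this image coincides with the image of $(A_f^{\mathcal{I}_\ell})_{\textup{div}}/(\Fr_\ell-1)$. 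Hence the snake sequence immediately yields (i).

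For (ii), I would apply the snake lemma to the first short exact sequence with $\Fr_\ell-1$ and combine it with Bloch--Kato's definition of $H^1_{\textup{f}}(\QQ_\ell, T_f)$ as the preimage of $H^1_{\textup{unr}}(\QQ_\ell, V_f)$ in $H^1(\QQ_\ell,T_f)$. The composition $H^1_{\textup{unr}}(\QQ_\ell, T_f) \hookrightarrow H^1_{\textup{f}}(\QQ_\ell, T_f)$ is automatic since unramified classes land in $V_f^{\mathcal{I}_\ell}/(\Fr_\ell-1)$ after inclusion. The cokernel is computed by chasing: the quotient injects into $\ker\bigl(H^1_{\textup{unr}}(V_f)\to H^1_{\textup{unr}}(A_f)\bigr)/\im\bigl(H^1_{\textup{unr}}(T_f)\bigr)$, which the first snake sequence from (i) identifies with precisely the image of $\mathcal{W}_f^{\Fr_\ell=1}$ under the connecting homomorphism into $(A_f^{\mathcal{I}_\ell})_{\textup{div}}/(\Fr_\ell-1)$. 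A further diagram chase, using that this connecting map is injective on the subgroup detected by $H^1_{\textup{f}}(\QQ_\ell, T_f)$, gives the full cokernel $\mathcal{W}_f^{\Fr_\ell=1}$.

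The main obstacle is the double snake lemma bookkeeping in (ii): one must carefully distinguish the kernel of the map to $H^1_{\textup{unr}}(\QQ_\ell, A_f)$ from the kernel of the map to $H^1_{\textup{f}}(\QQ_\ell, A_f)$, and verify that passing between the two contributes exactly the $\Fr_\ell$-fixed part of $\mathcal{W}_f$ rather than merely a quotient of it. The identification of the image of $V_f^{\mathcal{I}_\ell}$ with the divisible part of $A_f^{\mathcal{I}_\ell}$ is the linchpin that makes both computations compatible; everything else is a diagram chase in the two six-term exact sequences produced by the snake lemma.
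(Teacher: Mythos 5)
Your proposal reproduces the argument of Rubin's Lemma 1.3.5 in \cite{r00}, which is exactly what the paper appeals to (it states this lemma with a citation and no proof), so you have reconstructed the intended route: identify the image of $V_f^{\mathcal{I}_\ell}$ in $A_f^{\mathcal{I}_\ell}$ with the maximal divisible subgroup, then apply the snake lemma for $\Fr_\ell-1$ to the two resulting short exact sequences. One spot to sharpen: your justification that the image is \emph{all} of $(A_f^{\mathcal{I}_\ell})_{\textup{div}}$ (``all of whose divisible part is controlled by $V_f^{\mathcal{I}_\ell}$'') should be made precise by observing that, since $\ell\neq p$, the group $H^1(\mathcal{I}_\ell,T_f)$ is a finitely generated $\ZZ_p$-module (wild inertia is pro-$\ell$, and only the pro-$p$ part of tame inertia contributes), hence contains no nonzero divisible submodule; a divisible quotient of $(A_f^{\mathcal{I}_\ell})_{\textup{div}}$ injecting into it must therefore vanish. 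Likewise, the cokernel computation in (ii) is only gestured at; the bookkeeping you flag as the main obstacle does go through, but the chase needs to be written out carefully (in particular the map ``$H^1_{\textup{unr}}(V_f)\to H^1_{\textup{unr}}(A_f)$'' you invoke is not injective in general, so the identification of the cokernel with $\mathcal{W}_f^{\Fr_\ell=1}$ requires the full six-term snake sequence rather than just its tail).
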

As an immediate consequence of Lemma~\ref{compare-finite-unr-1}, we obtain:
\begin{cor}
\label{compare-finite-unr-2}
If $\mathbf{\mathbb{H}.T}$ holds, then  $H^{1}_{{f}}(\QQ_ {\ell},A_{f})= H^{1}_{\textup{unr}}(\QQ_ {\ell},A_{f})$ and $H^{1}_{\textup{unr}}(\QQ_ {\ell},T_{f})= H^{1}_{{f}}(\QQ_ {\ell},T_{f})$ for every $\ell \neq p$.
\end{cor}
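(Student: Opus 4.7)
The plan is to deduce this corollary directly from Lemma~\ref{compare-finite-unr-1} by showing that the quotient module $\mathcal{W}_f$ which controls the discrepancy in both exact sequences is actually zero under the hypothesis \htam.

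First, I would unpack the definition $\mathcal{W}_f = A_f^{\mathcal{I}_\ell}/(A_f^{\mathcal{I}_\ell})_{\textup{div}}$ and observe that $\mathcal{W}_f$ vanishes if and only if $A_f^{\mathcal{I}_\ell}$ is $p$-divisible. The hypothesis \htam\ asserts exactly that $A^{\mathcal{I}_\ell}$ is divisible for all $\ell \neq p$, which, combined with the fact that $\Lambda/(f)$ is a free $\ZZ_p$-module of finite rank (so tensoring preserves $p$-divisibility on one factor), already suggests that $A_f^{\mathcal{I}_\ell}$ inherits $p$-divisibility.

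Next, I would invoke Lemma~\ref{all-divisible} directly: since $\ell \neq p$ and $\mathcal{I}_\ell$ acts trivially on $\Lambda/(f)$, we have the identification $A_f^{\mathcal{I}_\ell} = A^{\mathcal{I}_\ell} \otimes_{\ZZ_p} \Lambda/(f)$, and tensoring a divisible $\ZZ_p$-module with a free $\ZZ_p$-module of finite rank yields a $p$-divisible group. Hence $(A_f^{\mathcal{I}_\ell})_{\textup{div}} = A_f^{\mathcal{I}_\ell}$, and therefore $\mathcal{W}_f = 0$.

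With $\mathcal{W}_f = 0$ in hand, the two short exact sequences of Lemma~\ref{compare-finite-unr-1} collapse: sequence \textbf{(i)} gives $H^{1}_{\textup{f}}(\QQ_ {\ell},A_{f}) = H^{1}_{\textup{unr}}(\QQ_ {\ell},A_{f})$, and sequence \textbf{(ii)} gives $H^{1}_{\textup{unr}}(\QQ_ {\ell},T_{f}) = H^{1}_{\textup{f}}(\QQ_ {\ell},T_{f})$, which is exactly the claim. There is no real obstacle here; the whole argument is essentially a one-line reduction to Lemma~\ref{all-divisible} and Lemma~\ref{compare-finite-unr-1}, and the only point worth verifying carefully is the identification $A_f^{\mathcal{I}_\ell} = A^{\mathcal{I}_\ell} \otimes \Lambda/(f)$, which follows from the triviality of the $\mathcal{I}_\ell$-action on the Iwasawa variable at primes away from $p$.
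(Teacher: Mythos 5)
Your proof is correct and is exactly the intended argument: under \htam\ Lemma~\ref{all-divisible} gives $A_f^{\mathcal{I}_\ell}=A^{\mathcal{I}_\ell}\otimes\Lambda/(f)$ $p$-divisible, so $\mathcal{W}_f=0$ and both exact sequences in Lemma~\ref{compare-finite-unr-1} collapse to equalities. The paper treats this as immediate and gives no further detail, so you have supplied the same reasoning the author leaves implicit.
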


\begin{cor}
\label{unr-functorial}
Assume  $\mathbf{\mathbb{H}.T}$ holds. Then  $H^{1}_{\textup{unr}}(\QQ_ {\ell},T_{k,f})$ is the image of $H^{1}_{\textup{unr}}(\QQ_ {\ell},T_{f})$ under the map $H^{1}(\QQ_ {\ell},T_{f}) \ra H^{1}(\QQ_ {\ell},T_{k,f})$ restricted to $H^{1}_{\textup{unr}}(\QQ_ {\ell},T_{f})$.
\end{cor}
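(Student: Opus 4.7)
The plan is to reduce everything to the already-proven identities between $H^1_{\textup{unr}}$, $H^1_{\textup{f}}$ and $H^1_{\FFc}$, using the Tamagawa hypothesis $\mathbf{\mathbb{H}.T}$ twice: once on the source $T_f$ and once on the target $T_{k,f}$. The key intermediate claim I would establish first is that, under $\mathbf{\mathbb{H}.T}$,
\[
H^{1}_{\textup{f}}(\QQ_{\ell},T_{k,f}) \;=\; H^{1}_{\textup{unr}}(\QQ_{\ell},T_{k,f}).
\]
Granting this, the corollary follows by chaining: by this equality together with Lemma~\ref{compare-ffc-finite} one has $H^{1}_{\textup{unr}}(\QQ_{\ell},T_{k,f}) = H^{1}_{\FFc}(\QQ_{\ell},T_{k,f})$; by the very definition of propagating $\FFc$ from $T_f$ to $T_{k,f}$, the right-hand side is the image of $H^{1}_{\FFc}(\QQ_{\ell},T_f)$; and by Corollary~\ref{compare-finite-unr-2}, $H^{1}_{\FFc}(\QQ_{\ell},T_f) = H^{1}_{\textup{f}}(\QQ_{\ell},T_f) = H^{1}_{\textup{unr}}(\QQ_{\ell},T_f)$. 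Splicing these together gives exactly the statement of the corollary.

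So the whole content is the intermediate claim, and this is where I would focus. Identifying $T_{k,f}$ with the $p^k$-torsion $A_f[p^k]$, one has the short exact sequence of $\mathcal{D}_\ell$-modules
\[
0 \lra T_{k,f} \lra A_f \stackrel{p^k}{\lra} A_f \lra 0,
\]
where surjectivity of $p^k$ on $A_f$ comes from the fact that $T_f$ is a free $\ZZ_p$-module so $A_f$ is $p$-divisible. Restricting to $\mathcal{I}_\ell$ and taking the long exact sequence of $\mathcal{I}_\ell$-cohomology, the kernel of $H^1(\mathcal{I}_\ell,T_{k,f}) \lra H^1(\mathcal{I}_\ell,A_f)$ is the cokernel of multiplication by $p^k$ on $A_f^{\mathcal{I}_\ell}$, namely $A_f^{\mathcal{I}_\ell}/p^k A_f^{\mathcal{I}_\ell}$. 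But Lemma~\ref{all-divisible} says exactly that $A_f^{\mathcal{I}_\ell}$ is $p$-divisible, so this cokernel is zero and the map $H^1(\mathcal{I}_\ell,T_{k,f}) \hookrightarrow H^1(\mathcal{I}_\ell,A_f)$ is injective.

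Given this injectivity, a class $c\in H^1(\QQ_\ell,T_{k,f})$ lies in the inverse image of $H^1_{\textup{f}}(\QQ_\ell,A_f)=H^1_{\textup{unr}}(\QQ_\ell,A_f)$ (the second equality by Corollary~\ref{compare-finite-unr-2}, using $\mathbf{\mathbb{H}.T}$) if and only if its restriction to $\mathcal{I}_\ell$ dies in $H^1(\mathcal{I}_\ell,A_f)$, which by injectivity happens if and only if its restriction to $\mathcal{I}_\ell$ already dies in $H^1(\mathcal{I}_\ell,T_{k,f})$; that is to say, if and only if $c \in H^1_{\textup{unr}}(\QQ_\ell,T_{k,f})$. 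This establishes the intermediate claim, and the main obstacle — making sure the Tamagawa condition propagates to the finite level — is exactly handled here. The rest, as sketched, is bookkeeping with the definitions.
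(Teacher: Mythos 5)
Your proof is correct and rests on the same ingredients as the paper's, but it is organized around a different pivot. The paper proves the corollary by a squeeze: using Lemma~\ref{compare-ffc-finite}, the propagation $H^1_{\textup{f}}(\QQ_{\ell},T_{k,f})=\textup{image}(H^1_{\textup{unr}}(\QQ_{\ell},T_f))$ is contained in $H^1_{\textup{unr}}(\QQ_{\ell},T_{k,f})$ (unramified classes stay unramified under functoriality), while by definition and Corollary~\ref{compare-finite-unr-2} the same module $H^1_{\textup{f}}(\QQ_{\ell},T_{k,f})=\iota^{-1}_{k,f}(H^1_{\textup{unr}}(\QQ_{\ell},A_f))$ contains $H^1_{\textup{unr}}(\QQ_{\ell},T_{k,f})$; these two containments force equalities throughout without ever proving $H^1_{\textup{f}}=H^1_{\textup{unr}}$ on $T_{k,f}$ directly. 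You instead prove this equality at $T_{k,f}$ head-on, by taking the $\mathcal{I}_\ell$-cohomology of $0\to T_{k,f}\to A_f\xrightarrow{p^k} A_f\to 0$ and using Lemma~\ref{all-divisible} to show $H^1(\mathcal{I}_\ell,T_{k,f})\hookrightarrow H^1(\mathcal{I}_\ell,A_f)$, from which the agreement of the unramified kernels follows; the corollary then drops out by chaining through the definitions. Your version has the advantage of isolating exactly where $\mathbf{\mathbb{H}.T}$ is used (the cokernel of $p^k$ on $A_f^{\mathcal{I}_\ell}$ must vanish), whereas the paper's squeeze avoids the explicit long exact sequence at the cost of invoking both characterizations of $H^1_{\textup{f}}(\QQ_\ell,T_{k,f})$ simultaneously.
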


\begin{proof} By Lemma~\ref{compare-ffc-finite}, Lemma~\ref{compare-finite-unr-1} and Corollary~\ref{compare-finite-unr-2} we have the following diagram:
$$\xymatrix @C=.15in@R=.12in{H^1_{{f}}(\QQ_{\ell},T_{k,f})\ar @{=}[d]\ar@{=}[r]&\textup{image}(H^1_{{f}}(\QQ_{\ell},T_f))\ar@{=}[r]&\textup{image}(H^1_{\textup{unr}}(\QQ_{\ell},T_f))\subset H^1_{\textup{unr}}(\QQ_{\ell},T_{k,f})\\
H^1_{{f}}(\QQ_{\ell},T_{k,f})\ar@{=}[r]&\iota^{-1}_{k,f}(H^1_{{f}}(\QQ_{\ell},A_f))\ar@{=}[r]&\iota^{-1}_{k,f}(H^1_{\textup{unr}}(\QQ_{\ell},A_f))\supset H^1_{\textup{unr}}(\QQ_{\ell},T_{k,f})
}$$ which shows that all the containments above are in fact equalities. This completes the proof.
\end{proof}
\begin{prop}
\label{prop:cart}
Assuming $\mathbf{\mathbb{H}.T}$, the canonical Selmer structure $\FFc$ is cartesian on $\mathcal{T}$ at $\ell \neq p$.
\end{prop}

\begin{proof}
As we have remarked earlier, \textbf{C1.a} is satisfied by the definition of $\FFc$ on $\mathcal{T}$.

 \textbf{C1.b} also follows from Lemma~\ref{compare-ffc-finite}, Lemma~\ref{compare-finite-unr-2} and Corollary~\ref{unr-functorial}.

We next check \textbf{C2}. Suppose $f$ and $g$ are any distinguished power series. Let $V_f$ and $A_f$ be as above. Then we have the following commutative diagram with exact rows:
\begin{equation}
\label{diag:1}
\vcenter{
\xymatrix@R=.2in{
0\ar[r] &H ^1 _{\FFc}(\QQ_{\ell} \,,T_{k,f})\ar[r]\ar[d]& H ^1 (\QQ_{\ell} \,,T_{k,f})\ar[r]\ar[d] & \frac{H ^1 (\QQ_{\ell} \,,\,A_f)}{H^{1}_{{f}}(\QQ_{\ell},\,A_f)}\ar[d]\\
0\ar[r] &H ^1 _{\FFc}(\QQ_{\ell} \,,T_{k,fg})\ar[r]& H ^1 (\QQ_{\ell} \,,T_{k,fg})\ar[r]& \frac{H ^1 (\QQ_{\ell} \,,\,A_{fg})}{H^{1}_{{f}}(\QQ_ {\ell} \,,\,A_{fg})}}
}
\end{equation}

The rows are exact by Lemma~\ref{compare-ffc-finite} and the vertical maps are all induced from the multiplication by $g$ map $[g]: \Lambda /(f) \ra \Lambda /(fg).$

Using the Hochschild-Serre spectral sequence and the fact that the cohomological dimension  of $\textup{Gal}(\QQ_{\ell}^{\textup{unr}}/\QQ_{\ell})\cong \hat{\ZZ}$ is one, we obtain the following diagram:
\begin{equation}
\label{diag:2}
\vcenter{
\xymatrix @C=.5in @R=.25in{
\frac{H ^1 (\QQ_{\ell} ,A_f)}{H^{1}(\QQ_ {\ell} ^{\textup{unr}}/\QQ_{\ell},A_f(\mathcal{I}_{\ell}))} \ar[d]\ar[r]^(.50)\sim& H^{1}(\QQ_ {\ell} ^{\unr},A_{f})^{\textup{Fr}_{\ell}=1}\ar @{=}[r]\ar[d]&H^{1}(\mathcal{I}_{\ell},A_{f})^{\textup{Fr}_{\ell}=1}\ar[d]\\
\frac{H ^1 (\QQ_{\ell} ,A_{fg})}{H^{1}(\QQ_ {\ell} ^{\textup{unr}}/\QQ_{\ell},A_{fg}(\mathcal{I}_{\ell}))}\ar[r]^(.50)\sim& H^{1}(\QQ_ {\ell} ^{\unr},A_{fg})^{\textup{Fr}_{\ell}=1}\ar @{=}[r]& H^{1}(\mathcal{I}_{\ell},A_{fg})^{\textup{ Fr}_{\ell}=1}
}}
\end{equation}
where $A_f(\mathcal{I}_{\ell}):=A_f^{\mathcal{I}_{\ell}}$. Further, the $\mathcal{I}_{\ell}$-cohomology of the exact sequence \be\label{exact seq-main} \xymatrix @R=.12in{
&A_f\ar @{=}[d]& A_{fg}\ar @{=}[d] & A_g \ar @{=}[d]\\
0 \ar[r] &A \otimes \Lambda/(f) \ar[r]^{[g]} &A \otimes \Lambda/(fg) \ar[r]& A \otimes \Lambda/(g) \ar[r] &0
}\ee
(which exists because of the division algorithm for distinguished polynomials, see \cite[\S7]{washington}) gives, using the proof of Lemma~\ref{all-divisible},
$$0 \lra A^{\mathcal{I}_{\ell}} \otimes \Lambda/(f) \stackrel{[g]}{\lra} A^{\mathcal{I}_{\ell}} \otimes \Lambda/(fg) \lra A^{\mathcal{I}_{\ell}} \otimes \Lambda/(g).$$
Since tensoring with $A ^{\mathcal{I}_{\ell}}$ is right exact, the very right map in the exact sequence above is in fact surjective, thus the following sequence is exact:
\be\label{diag:exact1}0 \lra A^{\mathcal{I}_{\ell}} \otimes \Lambda/(f) \stackrel{[g]}{\lra} A^{\mathcal{I}_{\ell}} \otimes \Lambda/(fg) \lra A^{\mathcal{I}_{\ell}} \otimes \Lambda/(g)\lra 0\ee
 Furthermore, the $\mathcal{I}_{\ell}$-cohomology of the exact sequence~(\ref{exact seq-main}) and the exact sequence~(\ref{diag:exact1}) give rise to the following diagram with exact rows:
$$\xymatrix @R=.15in{H^{0}(\mathcal{I}_{\ell}\,,A_{fg})\ar[r]\ar @{=}[d]&H^{0}(\mathcal{I}_{\ell}\,,A_{g})\ar[r]\ar @{=}[d]&H^{1}(\mathcal{I}_{\ell}\,,A_{f})\ar[r]&H^{1}(\mathcal{I}_{\ell}\,,A_{fg})\\
A ^{\mathcal{I}_{\ell}}\otimes \Lambda/(fg)\ar[r]&A^{\mathcal{I}_{\ell}}\otimes \Lambda/(g)\ar[r]&0&&
}$$
This shows that the map $H^{1}(\mathcal{I}_{\ell},A_{f})\ra H^{1}(\mathcal{I}_{\ell},A_{fg})$ is injective and therefore the map on the  right hand side in~(\ref{diag:2}) is also injective (as well as the map on the left hand side), hence
\begin{equation}
\label{eq:1}
H^{1}_{\unr}(\QQ_ {\ell} ,A_{f})=\ker\left(H^{1}(\QQ_ {\ell} ,A_{f}) \lra \frac{H^{1}(\QQ_ {\ell},A_{fg})}{H^{1}_{\unr}(\QQ_ {\ell},A_{fg})}\right).
\end{equation}
Since we assumed  $\mathbf{\mathbb{H}.T}$, it follows by Corollary~\ref{compare-finite-unr-2} that $H^{1}_{{f}}(\QQ_ {\ell},A_{f})=H^{1}_{\textup{unr}}(\QQ_ {\ell},A_{f})$ for every distinguished power series $f$. This in turn implies, using (\ref{eq:1}),
 $$H^{1}_{{f}}(\QQ_ {\ell},A_{f})=\ker\left(H^{1}(\QQ_ {\ell},A_{f}) \lra \frac{H^{1}(\QQ_ {\ell},A_{fg})}{H^{1}_{f}(\QQ_ {\ell},A_{fg})}\right).$$
  But this means that the rightmost map in (\ref{diag:1}) is injective, therefore $$H ^1 _{\FFc}(\QQ_{\ell},T_{k,f})= \ker\left(H ^1 (\QQ_{\ell},T_{k,f}) \lra \frac{H ^1 (\QQ_{\ell},T_{k,fg})}{H ^1 _{\FFc}(\QQ_{\ell},T_{k,fg})}\right)$$
   which is the property \textbf{C2} in Definition~\ref{iwasawa-cartesian}.

 To verify the property \textbf{C3} in Definition~\ref{iwasawa-cartesian}, note that by the definition of $H^1_{\FFc}(\QQ_{\ell},T_f)$, we have an injection $${H^1(\QQ_{\ell},T_f)/H^1_{\FFc}(\QQ_{\ell},T_f) \hookrightarrow  H^1(\QQ_{\ell},V_f)/H^1_{\FFc}(\QQ_{\ell},V_f)},$$ which shows that $H^1(\QQ_{\ell},T_f)/H^1_{\FFc}(\QQ_{\ell},T_f)$ is $\oo$-torsion free. Property \textbf{C3} now follows from \cite[Lemma 3.7.1(i)]{mr02}.
\end{proof}

\subsubsection{Cartesian property at $p$}
In this section we verify that  $\FFc$ is cartesian on the collection
$$\mathcal{T}_0 =\{T_{k,\xx^m}: k,m \in \ZZ^+ \} \subset \mathcal{T}.$$

We will write $T_{k,m}$ instead of $T_{k,\xx^m}$, and $T_m$ instead of $T_{\xx^m}$ for notational convenience. Define also $R_{k,m}:=\LL/(\mm^k,\xx^m)$ for every $k,m\in \ZZ^+$.

Throughout this section we assume that \hsez\, holds.

\begin{lemma}
\label{h2-vanishing}
$H^2(\QQ_p,T\otimes \Lambda)=0$.
\end{lemma}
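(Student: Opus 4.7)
The plan is to identify $H^2(\QQ_p,T\otimes\Lambda)$ with the Pontryagin dual of $H^0(\QQ_{p,\infty},T^*)$, where $\QQ_{p,\infty}\subset\overline{\QQ}_p$ denotes the unique $\ZZ_p$-extension of $\QQ_p$ (equivalently, the completion of $\QQ_\infty$ at its unique prime above $p$), and then to deduce the vanishing of that $H^0$ from \hsez\ using the fact that $\Gamma=\Gal(\QQ_{p,\infty}/\QQ_p)$ is a pro-$p$ group.

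For the identification, first I would write $\Lambda=\varprojlim_n \ZZ_p[\Gamma_n]$ with $\Gamma_n=\Gamma/\Gamma^{p^n}$, and note that as a continuous $G_{\QQ_p}$-module $T\otimes_{\ZZ_p}\ZZ_p[\Gamma_n]\cong \textup{Ind}_{G_{\QQ_{p,n}}}^{G_{\QQ_p}}T$, where $\QQ_{p,n}\subset\QQ_{p,\infty}$ is the fixed field of $\Gamma^{p^n}$. Shapiro's lemma at each finite level, together with the standard passage of continuous cohomology to the inverse limit (the Mittag--Leffler hypothesis is trivially satisfied), yields
\[
H^i(\QQ_p,T\otimes\Lambda)\;\cong\;\varprojlim_n H^i(\QQ_{p,n},T).
\]
Local Tate duality at each layer identifies $H^2(\QQ_{p,n},T)$ with the Pontryagin dual of $H^0(\QQ_{p,n},T^*)$, and under this identification the corestriction maps on the $H^2$ side correspond to the inclusion maps on the $H^0$ side. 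Dualizing the inverse limit then gives
\[
H^2(\QQ_p,T\otimes\Lambda)\;\cong\;\bigl(\varinjlim_n H^0(\QQ_{p,n},T^*)\bigr)^{\!\vee}\;=\;H^0(\QQ_{p,\infty},T^*)^\vee.
\]

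The last step is to show $(T^*)^{G_{\QQ_{p,\infty}}}=0$. Suppose it is nonzero; since $T^*$ is a $p$-primary torsion group, so is this submodule, hence its $p$-torsion is a nonzero finite $\mathbb{F}_p$-vector space on which $\Gamma\cong\ZZ_p$ acts continuously. A pro-$p$ group acting on a nonzero finite $p$-group has nonzero fixed vectors, and any such vector would lie in $(T^*)^{G_{\QQ_p}}$, contradicting \hsez.

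The only real obstacle is bookkeeping: verifying that Shapiro's isomorphism, local Tate duality, and Pontryagin duality of the relevant inverse/direct limits all fit together compatibly (what one is really using is the local-at-$p$ input to Iwasawa-theoretic Poitou--Tate duality). Once the identification $H^2(\QQ_p,T\otimes\Lambda)\cong H^0(\QQ_{p,\infty},T^*)^\vee$ is in place, the pro-$p$ argument that finishes the proof is immediate.
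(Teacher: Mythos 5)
Your proof is correct, but it takes a genuinely different route from the paper's. The paper argues purely algebraically at the top of the tower: it applies local Tate duality once, at the base level, to get $H^2(\QQ_p,T)=0$ from $(T^*)^{G_{\QQ_p}}=0$; then it uses the exact sequence $0\to T\otimes\Lambda\xrightarrow{\gamma-1}T\otimes\Lambda\to T\to 0$ together with $\textup{cd}_p(G_{\QQ_p})=2$ to get $H^2(\QQ_p,T\otimes\Lambda)/(\gamma-1)H^2(\QQ_p,T\otimes\Lambda)\cong H^2(\QQ_p,T)=0$, and concludes by Nakayama's lemma (since $H^2(\QQ_p,T\otimes\Lambda)$ is a finitely generated $\Lambda$-module). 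Your argument instead unwinds the Iwasawa cohomology completely: Shapiro plus Tate duality at every finite layer identifies $H^2(\QQ_p,T\otimes\Lambda)$ with $\bigl((T^*)^{G_{\QQ_{p,\infty}}}\bigr)^\vee$, and then the pro-$p$ fixed-point argument reduces vanishing of the fixed points over $\QQ_{p,\infty}$ to \hsez. The paper's route is shorter and sidesteps the bookkeeping of limits, duals, and Mittag--Leffler; yours makes the underlying structural fact more explicit --- it shows $H^2(\QQ_p,T\otimes\Lambda)\cong\bigl((T^*)^{G_{\QQ_{p,\infty}}}\bigr)^\vee$, so that vanishing is \emph{equivalent} to $(T^*)^{G_{\QQ_{p,\infty}}}=0$, which in turn is equivalent to \hsez\ by the pro-$p$ observation. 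That extra information is worth having, but for the lemma as stated the Nakayama argument is the leaner path.
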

\begin{proof}
By local duality, $H^0(\QQ_p,T^*)=0$ if and only if $H^2(\QQ_p,T)=0$. Since the cohomological dimension of $G_{\QQ_p}$ is 2 and since we assumed \hsez, it follows that
$$H^2(\QQ_p,T\otimes\Lambda)/(\gamma-1) \cong H^2(\QQ_p,T)=0.$$
 Proof of Lemma now follows by Nakayama's lemma.
\end{proof}

\begin{prop}
\label{cart-at-p}
The canonical Selmer structure $\FFc$ is cartesian on $\mathcal{T}_0$ (in the sense of Definition~\ref{iwasawa-cartesian}; but $\mathcal{T}$ replaced by $\mathcal{T}_0$) at the prime $p$.
\end{prop}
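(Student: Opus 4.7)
My plan is to show first that conditions \textbf{C2} and \textbf{C3} are automatic at $p$, and then reduce \textbf{C1} to a vanishing-of-$H^2$ statement already furnished by Lemma~\ref{h2-vanishing}.

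Since $H^1_{\FFc}(\QQ_p, T_{k,f}) = H^1(\QQ_p, T_{k,f})$ (the full local cohomology) by the definition of $\FFc$, the inverse image of $H^1_{\FFc}(\QQ_p, T_{k,fg}) = H^1(\QQ_p, T_{k,fg})$ under any map from $H^1(\QQ_p, T_{k,f})$ is automatically all of $H^1(\QQ_p, T_{k,f})$, and likewise for the transition $k \leq k'$. So \textbf{C2} and \textbf{C3} hold trivially and demand no further work.

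The content therefore lies in \textbf{C1.a} and \textbf{C1.b}, each of which becomes a statement of surjectivity of a natural reduction map on $H^1$. For \textbf{C1.a}, given positive integers $k \leq k'$ and $f = \xx^m$, the plan is to feed the short exact sequence
\[
0 \lra T_{k'-k,m} \xrightarrow{[p^k]} T_{k',m} \lra T_{k,m} \lra 0
\]
into the long exact sequence in $G_{\QQ_p}$-cohomology; surjectivity $H^1(\QQ_p, T_{k',m}) \twoheadrightarrow H^1(\QQ_p, T_{k,m})$ follows once we know $H^2(\QQ_p, T_{k'-k,m}) = 0$. Analogously, for \textbf{C1.b} with $g = \xx^{m'}$ the relevant sequence is
\[
0 \lra T_{k,m'} \xrightarrow{[\xx^m]} T_{k,m+m'} \lra T_{k,m} \lra 0,
\]
and one needs $H^2(\QQ_p, T_{k,m'}) = 0$.

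Both required $H^2$-vanishings follow from Lemma~\ref{h2-vanishing} (which itself invoked \hsez) together with the fact that $G_{\QQ_p}$ has cohomological dimension $2$: since $T_{k'-k,m}$ and $T_{k,m'}$ are each continuous quotients of $T \otimes \Lambda$, the right-exactness of $H^2(\QQ_p, -)$ on such quotients propagates the vanishing $H^2(\QQ_p, T \otimes \Lambda) = 0$ downward. With this input the surjectivities of \textbf{C1.a} and \textbf{C1.b} are immediate, completing the verification that $\FFc$ is cartesian on $\TT_0$ at $p$. The only non-formal step is the $H^2$-vanishing, which has already been isolated and proved, so there is no genuine obstacle beyond assembling these pieces.
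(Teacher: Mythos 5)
The main issue is the assertion that $H^1_{\FFc}(\QQ_p, T_{k,f}) = H^1(\QQ_p, T_{k,f})$ holds \emph{by the definition of} $\FFc$. This is not what the definition says. The canonical Selmer structure is defined with the full $H^1(\QQ_p, T_f)$ as the local condition on $T_f$, and on the quotient $T_{k,f}$ it is obtained by \emph{propagation}, i.e.\ $H^1_{\FFc}(\QQ_p, T_{k,f})$ is by definition the \emph{image} of $H^1(\QQ_p, T_f)$ under $H^1(\QQ_p, T_f) \to H^1(\QQ_p, T_{k,f})$, not the full target. The cokernel of this map is $H^2(\QQ_p, T_f)[p^k]$, so the identity you want requires $H^2(\QQ_p, T_f)=0$, which in turn comes from Lemma~\ref{h2-vanishing} and the fact that $G_{\QQ_p}$ has cohomological dimension $2$. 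This is the genuinely nontrivial step, and it is exactly where the paper spends its effort. Once you have it, \textbf{C1.b}, \textbf{C2}, \textbf{C3} all collapse, as you observe; but you cannot first declare \textbf{C2}, \textbf{C3} trivial and then prove the underlying fact only implicitly in the guise of \textbf{C1}.

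You have also inverted which conditions are automatic. \textbf{C1.a} is the one that holds by definition: the map $H^1(\QQ_p,T_f)\to H^1(\QQ_p,T_{k,f})$ factors through $H^1(\QQ_p,T_{k',f})$, so the image of the propagated condition at level $k'$ maps onto the propagated condition at level $k$ with no cohomological input at all. Your surjectivity argument for \textbf{C1.a} (via the short exact sequence $0\to T_{k'-k,m}\to T_{k',m}\to T_{k,m}\to 0$ and vanishing of $H^2$) is correct but unnecessary for \textbf{C1.a}; the place where that kind of computation is indispensable is in showing that the propagated condition fills up all of $H^1(\QQ_p,T_{k,m})$. In short: the tools you invoke (Lemma~\ref{h2-vanishing}, right exactness of $H^2$ over a field of cohomological dimension $2$) are the correct ones, but the logical skeleton is out of order, and as written the treatment of \textbf{C2} and \textbf{C3} rests on a claim that is asserted as definitional when it is in fact the heart of the proof.
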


\begin{proof}
\textbf{C1.a} is satisfied by the definition of $\FFc$ on $\mathcal{T}_0$.

The exact sequence
$$H^1(\QQ_p,T\otimes\Lambda) \lra H^1(\QQ_p,T_m) \lra H^2(\QQ_p,T\otimes\Lambda)[(\gamma-1)^m]$$
 shows, by Lemma~\ref{h2-vanishing}, that $H^1_{\FFc}(\QQ_p,T_m)=H^1(\QQ_p,T_m)=\textup{image}(H^1(\QQ_p,T\otimes\Lambda)).$ Furthermore, the $G_{\QQ_p}$-cohomology of the exact sequence
 $$0 \lra T_m \stackrel{[\varpi^k]}{\lra}T_m \lra T_{k,m}\lra 0$$
  implies that $\textup{coker}\{H^1(\QQ_p,T_m) \ra H^1 (\QQ_p,T_{k,m})\}=H^2(\QQ_p,T_m)[\mm^k].$ Since the cohomological dimension of $G_{\QQ_p}$ is 2, it follows that
  $$H^2(\QQ_p,T_m)\cong H^2(\QQ_p,T\otimes\Lambda)/(\gamma-1)^mH^2(\QQ_p,T\otimes\Lambda),$$
  and this is \emph{zero} by Lemma~\ref{h2-vanishing}. We therefore see that
  \be \label{identify-at-p}H^1_{\FFc}(\QQ_p,T_{k,m}):=\textup{im}\{H^1(\QQ_p,T\otimes\Lambda) \ra H^1(\QQ_p,T_{k,m})\}=H^1(\QQ_p,T_{k,m}).\ee
  It now follows from (\ref{identify-at-p}) that $\FFc$ satisfies  \textbf{C1.b}, \textbf{C2} and \textbf{C3}.

\end{proof}

\begin{rem}
\label{hsez-necessary}
For $\FFc$ on $\TT_0$ to satisfy condition  \textbf{C2}, \hez\, alone is not sufficient; we indeed need to assume \hsez. This is what we explain in this paragraph: We prove that \hez\, together with \textbf{C2} for $\FFc$ imply \hsez.

We have the following diagram with exact rows:
$$\xymatrix@R=.19in{H^1(\QQ_p,T_1) \ar[r] \ar[d]^{\alpha} &H^1(\QQ_p,T_{k,1}) \ar[r]\ar[d]^{\beta}&H^2(\QQ_p,T_1)[\mm^k]\ar[d]^{\theta} \ar[r]&0\\
H^1(\QQ_p,T_{m+1}) \ar[r] &H^1(\QQ_p,T_{k,m+1}) \ar[r]&H^2(\QQ_p,T_{m+1})[\mm^k] \ar[r]&0
}$$
where $\alpha, \beta$ and $\theta$ are all induced from multiplication by $\xx^m$.
If \textbf{C2} is true, then this means $\theta$ is injective. Set $M=H^2(\QQ_p,T\otimes\Lambda)$. It is well known that $M$ is a finitely generated torsion $\Lambda$-module ({c.f.},~\cite[ Proposition 3.2.1]{pr}). Let $\textup{char}(M)$ denote the characteristic ideal of $M$. Then \hez\, is equivalent to saying that $\xx\nmid\textup{char}(M)$; which in return implies that $M/\xx^rM$ is finite for all $r \in \ZZ^+$.

On the other hand, since the cohomological dimension of $G_{\QQ_p}$ is 2,
$$H^2(\QQ_p,T_1) \cong M/\xx M\hbox{, and } H^2(\QQ_p,T_{m+1}) \cong M/\xx^{m+1}M,$$
 hence $\theta$ is given by
 $$\theta:\,\{M/\xx M\} [\mm^k] \lra \{M/\xx^{m+1}M\}[\mm^k].$$
 Further, since $M/\xx M$ is finite, one can choose $k$ large enough so that
 $$\{M/\xx M\} [\mm^k]=M/\xx M, \,\, \hbox{and  }\{M/\xx^{m+1}M\}[p^k]=M/\xx^{m+1}M.$$
For such $k$, the kernel of $\theta$ is $(M[\xx^m]+\xx M)/\xx M$, which is trivial since $\theta$ is injective. This means $M[\xx^m] \subset \xx M$ for all $m$. Let $M[\xx^{\infty}]:=\cup_{m \in \ZZ^+}M[\xx^m]$. Then $M[\xx^{\infty}] \subset \xx M$; which shows that $M[\xx^{\infty}]$ is in fact $\xx$-divisible. But since $M$ is finitely generated, this is impossible unless $M=0$. As in the proof of Lemma~\ref{h2-vanishing}, this is equivalent to \hsez.
\end{rem}

\subsection{Choosing a set of Kolyvagin primes}
\label{primes}
In this section, we choose the set of primes $\PP_{k,m}$ that  were mentioned in \S\ref{subsec:hypo}. Let $\tau$ be as in $\mathbf{H.2}$. Given  $k,m \in \ZZ^+$, we define
$$ \PP_{k,m}\newnot{symbol:pkm}:=\{\ell: \hbox{Fr}_{\ell} \hbox{ is conjugate to } \tau \hbox{ in } \textup{Gal}(\QQ(T/\mm^{k+m}T, \pmb{\mu}_{p^{k+m+1}})/\QQ)\},$$
 where $\QQ(T/\mm^{k+m}T, \pmb{\mu}_{p^{k+m+1}})$ is the fixed field in $\overline{\QQ}$ of the kernel of the map
 $$G_{\QQ} \lra \textup{Aut}(T/\mm^{k+m}T)\oplus\textup{Aut}(\pmb{\mu}_{p^{k+m+1}}).$$

Note that $\PP_{k,m}$ depends only on $k+m$; we may therefore define $\PP_{k+m}:=\PP_{k,m}$. We set $\PP:=\PP_2=\PP_{1,1}$\newnot{symbol:primes}. Note also that $\PP_j \subset \PP_i$ for all $j>i$.
\begin{rem}
\label{rem:l-cong-to-1}
If $\ell\in\PP_{k,m}$, then by our definition $\textup{Fr}_{\ell}$ is conjugate to $\tau$ in $\textup{Gal}(\QQ(\pmb{\mu}_{p^{k+m+1}})/\QQ)$,  hence we have $\ell \equiv 1 \,(\textup{mod} \, \mm^{k+m+1})$, since $\tau=1$ on $\pmb{\mu}_{p^\infty}$.
\end{rem}
\begin{lemma}
\label{lem:1}
For $\ell \in \PP _{k,m}$, the Frobenius  $\textup{Fr}_{\ell}$ acts trivially on $\Lambda/(\mm^k,\mathbf{X}^m)$.
\end{lemma}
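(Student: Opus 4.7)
\medskip

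\noindent\textbf{Proof plan.} The point is a direct computation: the action of $G_{\QQ}$ on $\Lambda=\ZZ_p[[\Gamma]]$ factors through $\Gamma$, and the restriction of the cyclotomic character to $\Gamma$ allows one to read off the exponent of $\gamma$ with which $\textup{Fr}_\ell$ acts.

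\medskip

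\noindent\textbf{Step 1 (identify the action).} I would start by recording that $G_\QQ$ acts on $\Lambda$ only through its quotient $\Gamma=\Gal(\QQ_\infty/\QQ)$. Via the isomorphism $\chi_{\textup{cycl}}:\Gamma\xrightarrow{\sim}1+p\ZZ_p$ (recall $p$ is odd throughout the paper), the topological generator $\gamma$ corresponds to some $u_0\in1+p\ZZ_p$ with $v_p(u_0-1)=1$, and $\textup{Fr}_\ell\in\Gamma$ corresponds to $\ell$ (viewing $\ell\in1+p\ZZ_p$, using $\ell\equiv1\pmod p$ from Remark~2.5). Write $\ell=u_0^a$ with $a\in\ZZ_p$; then $\textup{Fr}_\ell=\gamma^a$ and its action on $\Lambda$ is multiplication by $\gamma^a=(1+\xx)^a$.

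\medskip

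\noindent\textbf{Step 2 (bound $v_p(a)$).} By Remark~2.5, $\ell\equiv1\pmod{p^{k+m+1}}$, i.e.\ $u_0^a\equiv1\pmod{p^{k+m+1}}$. Since $p$ is odd, the logarithm (or the elementary identity $v_p(u_0^a-1)=v_p(u_0-1)+v_p(a)=1+v_p(a)$) gives $v_p(a)\ge k+m$.

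\medskip

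\noindent\textbf{Step 3 (binomial expansion).} I want to show that $\textup{Fr}_\ell-1=\gamma^a-1\in(p^k,\xx^m)$, which is equivalent to the lemma. Expand
\[
\gamma^a-1=(1+\xx)^a-1=\sum_{i\ge1}\binom{a}{i}\xx^i.
\]
For $i\ge m$ each term lies in $(\xx^m)$. For $1\le i\le m-1$ it suffices to show $v_p\!\left(\binom{a}{i}\right)\ge k$. Since $v_p(a)\ge k+m\ge k+1>v_p(j)$ for every $1\le j\le i-1\le m-1$, the elementary identity
\[
v_p\!\left(\binom{a}{i}\right)=v_p(a)+v_p((i-1)!)-v_p(i!)=v_p(a)-v_p(i)
\]
applies, and gives $v_p\!\left(\binom{a}{i}\right)\ge (k+m)-v_p(i)\ge k+m-(m-1)>k$, as desired. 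Hence $\gamma^a-1\in(p^k,\xx^m)$.

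\medskip

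\noindent\textbf{Step 4 (conclude).} Multiplication by $\gamma^a-1$ is then zero on $\Lambda/(p^k,\xx^m)$, i.e.\ $\textup{Fr}_\ell$ acts as the identity on this quotient. The argument is symmetric in $\pm a$, so no matter which sign convention (covariant vs.\ contravariant) one uses for the $G_\QQ$-action on $\Lambda$, the conclusion is the same.

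\medskip

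\noindent\textbf{Anticipated difficulty.} There is no conceptual obstacle; the only thing to be careful about is the bookkeeping in Step~3 and making sure the hypothesis $\ell\equiv1\pmod{p^{k+m+1}}$ (rather than the weaker $\pmod{p^{k+m}}$) is enough to absorb the denominators coming from $i!$ when $i$ is close to $m$. The calculation above shows the exponent $k+m+1$ in the definition of $\PP_{k,m}$ was chosen with precisely this in mind.
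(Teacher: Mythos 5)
Your proof is correct and follows the same path as the paper: identify the image of $\textup{Fr}_\ell$ in $\Gamma$ as $\gamma^a$ with $v_p(a)\ge k+m$ (the paper gets this from $\textup{Fr}_\ell$ being conjugate to $\tau$ and $\tau=1$ on $\mu_{p^{k+m+1}}$, you get it from $\ell\equiv1\pmod{p^{k+m+1}}$ via the cyclotomic character, which is the same fact), and then expand $(1+\xx)^a$ binomially. The one genuine difference is that your Step 3 is substantially more careful than the paper's. The paper compresses the key computation into the line $\gamma^\alpha=(\gamma-1+1)^\alpha\equiv(\gamma-1)^\alpha+1\equiv1\pmod{(p^k,(\gamma-1)^m)}$, which taken literally is not meaningful ($\alpha\in\ZZ_p$ is not a positive integer, and the middle expression is not the binomial expansion); what is actually needed, and what you supply, is the verification that $\binom{a}{i}$ is divisible by $p^k$ for $1\le i\le m-1$ via the identity $v_p\bigl(\binom{a}{i}\bigr)=v_p(a)-v_p(i)$ (valid precisely because $v_p(a)\ge k+m>v_p(j)$ for all $1\le j\le i-1$), giving $v_p\bigl(\binom{a}{i}\bigr)\ge k+m-(m-1)>k$. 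So your write-up fills a small but real gap in the paper's argument, and your closing remark about the sign convention for the $G_\QQ$-action on $\Lambda$ is a sensible precaution that the paper leaves implicit.
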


\begin{proof}
Let $\gamma$ be the fixed topological generator of $\Gamma$, such that $\gamma -1$ corresponds to $\mathbf{X}$ under the identification of $\Lambda$ with $\oo[[\xx]]$.  The Frobenius $\textup{Fr}_{\ell}$ acts on $\Lambda$ through the natural surjection $\textup{Gal}(\overline{\QQ}/\QQ) \twoheadrightarrow \Gamma$. Let $\overline{\textup{Fr}}_{\ell}$ be the image of $\textup{Fr}_{\ell}$ under this map. Since $\textup{Fr}_{\ell}$ and $\tau$ are conjugate in the group $\textup{Gal}(\QQ(T/\mm^kT , \pmb{\mu} _{p^{k+m+1}})/\QQ)$ (so also in $\textup{Gal}(\QQ_{k+m}/\QQ)$, where $\QQ_{k+m}$ is the $(k+m)$-layer of $\QQ_\infty$), it follows from our assumption that $\tau=1$ on $\pmb{\mu}_{p^\infty}$ that $\overline{\textup{Fr}}_{\ell}=\gamma ^{\alpha}$, with $\alpha \in \ZZ_p$ and has $p$-adic valuation at least $k+m$. Note that
$$\gamma ^{\alpha}=(\gamma -1+1)^{\alpha}\equiv (\gamma -1)^{\alpha}+1 \equiv 1 \hbox{ mod }(\mm^k,(\gamma-1)^m),$$
 which completes the proof of Lemma.

\end{proof}
Recall that $R_{k,m}:=\LL/(\mm^k,\xx^m)$ for $k,m \in \ZZ^+$.
\begin{lemma}
For any $\ell \in \PP _{k,m}$, the $R_{k,m}$-module $T_{k,m}/(\textup{Fr}_{\ell}-1)T_{k,m}$ is free of rank one.
\end{lemma}
\begin{proof}
By Lemma~\ref{lem:1}, $(\textup{Fr}_{\ell}-1)T_{k,m}= \left[(\textup{Fr}_{\ell}-1)(T/\mm^kT)\right]\otimes \Lambda/(\xx^m)$. Since $(\hbox{Fr}_{\ell}-1)(T/\mm^kT)=(\mm^kT+(\textup{Fr}_{\ell}-1)T)/\mm^kT$, it follows that
\begin{equation}
\label{eqn:5}
(\textup{Fr}_{\ell}-1)T_{k,m}=(\mm^kT+(\textup{Fr}_{\ell}-1)T)/\mm^kT \otimes \Lambda/(\xx^m)
\end{equation}
Since $\Lambda/(\xx^m)$ is a free $\oo$-module, the functor $[-\otimes_{\oo}\Lambda/\xx^m]$ is exact. It therefore follows from (\ref{eqn:5}) that
\be
\label{eqn:6}
T_{k,m}/(\textup{Fr}_{\ell}-1)T_{k,m}= T/(\mm^kT+(\textup{Fr}_{\ell}-1)T) \otimes \Lambda/(\xx^m).
\ee
Since $\ell \in \PP_{k,m}$, the $\oo/\mm^k$-module $T/(\mm^kT+(\textup{Fr}_{\ell}-1)T)$ is free of rank one. Now using (\ref{eqn:6}), the proof of Lemma follows.

\end{proof}

Note that, by Remark~\ref{rem:l-cong-to-1} it follows that  $|\mathbb{F}_{\ell}^{\times}| \cdot T_{k,m}=(\ell-1)\cdot T_{k,m}=0,$ hence all the results proved in \cite[\S1.2]{mr02} hold with the choice $R=R_{k,m}$ and the free $R_{k,m}$-module $T_{k,m}$. We record these results for future reference:

\begin{lemma}\cite[Lemma 1.2.1]{mr02}
\label{lemma:fs-explicit}
For $\ell \in \PP_{k,m}$, there are canonical functorial isomorphisms
\begin{enumerate}
\item[\textbf{(i)}] $H^1 _{f}(\QQ_{\ell},T_{k,m})\cong T_{k,m}/(\Fr_{\ell}-1)T_{k,m}$,
\item[\textbf{(ii)}] $H^1 _{\textup{s}}(\QQ_{\ell},T_{k,m}) \otimes \mathbb{F}_{\ell}^{\times} \cong T_{k,m}^{\Fr_{\ell}=1}$
\end{enumerate}
\end{lemma}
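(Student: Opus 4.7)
The plan is to invoke Mazur--Rubin's Lemma~1.2.1 of \cite{mr02} with the choice of coefficient ring $R=\Lambda/(p^k,\xx^m)$ and the free $R$-module $T_{k,m}$; as the text preceding the statement already records, all results of \cite{mr02}~\S1.2 apply verbatim in this setting. The two inputs needed are: (a) that $T_{k,m}$ is unramified as a $\mathcal{D}_\ell$-module, because $T$ is unramified at $\ell\in\PP_{k,m}$ (which by construction is disjoint from $\Sigma(\FFc)$) and $\mathcal{I}_\ell$ acts trivially on $\Lambda$; and (b) that $(\ell-1)=|\mathbb{F}_\ell^{\times}|$ annihilates $T_{k,m}$, which follows because $\ell\equiv 1\pmod{p^{k+m+1}}$ by Remark~\ref{rem:l-cong-to-1}, while $T_{k,m}$ is killed by $p^k$.

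For \textbf{(i)}, since $T_{k,m}$ is unramified at $\ell$, the finite part coincides with the unramified part (immediate from Lemma~\ref{compare-ffc-finite} and Corollary~\ref{compare-finite-unr-2}, or directly from the definition applied to an unramified module). Hence $H^1_{\textup{f}}(\QQ_\ell,T_{k,m})=H^1(\QQ_\ell^{\textup{unr}}/\QQ_\ell,T_{k,m})=H^1(\widehat{\ZZ},T_{k,m})$, and the latter is identified, via evaluation at $\Fr_\ell$, with the coinvariants $T_{k,m}/(\Fr_\ell-1)T_{k,m}$; the identification is manifestly functorial in $T_{k,m}$.

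For \textbf{(ii)}, inflation--restriction together with the fact that $\textup{Gal}(\QQ_\ell^{\textup{unr}}/\QQ_\ell)\cong\widehat{\ZZ}$ has cohomological dimension one gives $H^1_s(\QQ_\ell,T_{k,m})\cong H^1(\mathcal{I}_\ell,T_{k,m})^{\Fr_\ell=1}$. Since the wild inertia at $\ell\neq p$ is pro-$\ell$ while $T_{k,m}$ is $p$-primary, only the pro-$p$ tame quotient $\mathcal{I}_\ell^{\textup{tame},p}\cong\ZZ_p(1)$ contributes, so this group becomes $\textup{Hom}(\ZZ_p(1),T_{k,m})^{\Fr_\ell=1}$. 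A Frobenius-invariant homomorphism $\phi$ satisfies $\Fr_\ell\,\phi(g)=\ell\,\phi(g)$, so its image lies in $T_{k,m}^{\Fr_\ell=\ell}$; but $(\ell-1)\cdot T_{k,m}=0$ forces $T_{k,m}^{\Fr_\ell=\ell}=T_{k,m}^{\Fr_\ell=1}$. The tensor factor $\mathbb{F}_\ell^{\times}$ absorbs the canonical Tate twist, via $\mu_{p^{k+m}}\hookrightarrow\mathbb{F}_\ell^{\times}$, so that the identification is canonical and does not depend on a choice of generator of tame inertia.

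I do not expect any essential obstacle: both the isomorphisms and their asserted functoriality transcribe directly from \cite{mr02}~Lemma~1.2.1, the only content beyond citation being the verification of the annihilator condition (b) on $T_{k,m}$, which has already been made explicit in the paragraph preceding the statement.
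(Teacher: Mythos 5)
Your proposal is correct and takes exactly the same route as the paper: the text immediately preceding the lemma already observes that $(\ell-1)\cdot T_{k,m}=0$ and that $T_{k,m}$ is a free module over $R=\Lambda/(p^k,\mathbf{X}^m)$, unramified at $\ell$, and then simply cites Lemma~1.2.1 of \cite{mr02}. Your additional unpacking of the Mazur--Rubin argument (identifying the finite part with $H^1(\widehat{\ZZ},T_{k,m})$ and the singular part with Frobenius-invariant homomorphisms from tame inertia, using $(\ell-1)T_{k,m}=0$ to identify $T_{k,m}^{\Fr_\ell=\ell}$ with $T_{k,m}^{\Fr_\ell=1}$) is accurate but goes beyond what the paper records, which is a bare citation.
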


\begin{lemma}\cite[Lemma1.2.3]{mr02}
\label{lem:ref}
For $\ell \in \PP_{k,m}$,  the finite-singular comparison map
$$\phi_{\ell}^{\textup{fs}}: H^1 _{f}(\QQ_{\ell},T_{k,m}) \lra H^1 _{\textup{s}}(\QQ_{\ell},T_{k,m}) \otimes \mathbb{F}_{\ell}^{\times}$$
(see \cite{mr02} Definition~1.2.2)  is an isomorphism. In particular, both $H^1 _{f}(\QQ_{\ell},T_{k,m})$ and $H^1_{\textup{s}}(\QQ_{\ell},T_{k,m})$ are free of rank one over $R_{k,m}$.
\end{lemma}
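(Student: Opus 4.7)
The plan is to reduce directly to \cite{mr02} Lemma 1.2.3, by checking that our setting fits into the local-ring framework used there, with $R := \Lambda/(p^k,\xx^m)$ playing the role that $\ZZ/p^k\ZZ$ plays in \emph{loc.\ cit.}

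First I would record two preliminary observations. By Remark \ref{rem:l-cong-to-1}, $\ell \equiv 1 \pmod{p^{k+m+1}}$, and since the additive exponent of $R$ divides $p^k$, this forces $(\ell-1)\cdot T_{k,m}=0$; in particular $|\mathbb{F}_\ell^\times|$ annihilates $T_{k,m}$. Secondly, by Lemma \ref{lem:1}, $\Fr_\ell$ acts trivially on $\Lambda/(p^k,\xx^m)$, so the $\Fr_\ell$-action on $T_{k,m}=T\otimes_{\ZZ_p}R$ is $R$-linear. Together these put us in exactly the situation handled in \cite{mr02} \S1.2, but with the local ring $R$ in place of $\ZZ/p^k\ZZ$.

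Next I would translate the problem via Lemma \ref{lemma:fs-explicit}, which is the analogue of \cite{mr02} Lemma 1.2.1: under the identifications
$$H^1_{\finite}(\QQ_\ell,T_{k,m}) \;\cong\; T_{k,m}/(\Fr_\ell-1)T_{k,m}, \qquad H^1_{\textup{s}}(\QQ_\ell,T_{k,m})\otimes\mathbb{F}_\ell^\times \;\cong\; T_{k,m}^{\Fr_\ell=1},$$
the map $\phi_\ell^{\textup{fs}}$ becomes the canonical $R$-linear map between coinvariants and invariants of $\Fr_\ell$ on $T_{k,m}$ (the computation establishing this compatibility is the same as in \cite{mr02}, and goes through verbatim once $(\ell-1)T_{k,m}=0$ and the $R$-linearity of the action have been verified).

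Finally, the preceding lemma already gives that the source $T_{k,m}/(\Fr_\ell-1)T_{k,m}$ is free of rank one over $R$. From here the conclusion is a piece of linear algebra which is carried out in \cite{mr02} Lemma 1.2.3: if $M$ is a free module of finite rank over a local ring $R$ and $\sigma\in \textup{Aut}_R(M)$ is such that $M/(\sigma-1)M$ is free of rank one over $R$, then $M^{\sigma=1}$ is also free of rank one over $R$, and the natural map (coming from the two exact sequences defining invariants and coinvariants) is an isomorphism. Applied to $M=T_{k,m}$ (which is free of rank $\textup{rk}_{\ZZ_p}(T)$ over $R$) and $\sigma=\Fr_\ell$, both hypotheses are in place, so the statement follows. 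I do not expect a genuine obstacle in this last step; the substantive content sits in the preceding lemma, where hypothesis $\mathbf{H.2}$ was used to extract freeness of the coinvariants.
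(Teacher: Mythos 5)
Your approach matches the paper's: the paper simply records that $|\mathbb{F}_\ell^\times|\cdot T_{k,m}=0$ and that $T_{k,m}$ is a free module over the local artinian ring $R=\Lambda/(p^k,\xx^m)$ on which $\Fr_\ell$ acts $R$-linearly, and then cites \cite{mr02}~\S 1.2 wholesale, offering no further proof --- exactly as you do. One caveat on the gloss in your final paragraph, though: \cite{mr02}~Lemma~1.2.3 is not the abstract statement you quote. For a free module $M$ over a local artinian ring $R$ with $R$-linear automorphism $\sigma$ such that $M/(\sigma-1)M$ is free of rank one, there is no canonical map from coinvariants to invariants supplied by the exact sequences alone that one could hope is an isomorphism; the evident composite $M^{\sigma=1}\hookrightarrow M\twoheadrightarrow M/(\sigma-1)M$ is the zero map whenever $M^{\sigma=1}\subset(\sigma-1)M$, which already happens for $M=k^2$ (with $k$ the residue field and $p>2$) and $\sigma$ a nontrivial unipotent, even though both sides are then free of rank one. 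The comparison map $\phi_\ell^{\textup{fs}}$ of \cite{mr02}~Definition~1.2.2 is a specific map built from the action of $\Fr_\ell$ and the choice of a tame generator, and the proof that it is an isomorphism uses the Euler factor $P(x)=\det(1-\Fr_\ell x\mid T)$ and the factorization $P(x)=(x-1)Q(x)$ that $\mathbf{H.2}$ makes possible; that is where the substantive input from $\mathbf{H.2}$ enters, not merely in the freeness of the coinvariants as your paragraph suggests. Since you ultimately defer to \cite{mr02} for the argument itself, the proof is sound --- but the last step is not a generic invariants/coinvariants statement.
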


\subsection{The Transverse Condition}
\label{subsub:Trans}
In this section we study the properties of the \emph{transverse condition} (recall Definition~\ref{my-transverse} above) at primes $\ell \in \PP_{k,m}$. Compare the facts we record in this section to  Definition 1.1.6(iv), \S1.2 and Lemma 3.7.4 of \cite{mr02}.

As remarked earlier, if $\ell \in \PP_{k,m}$, then the results from \cite[\S1.2]{mr02} still apply. In particular:
\begin{lemma}
\label{lem:trans}
The transverse subgroup $H^1 _{\textup{tr}}(\QQ_{\ell},T_{k,m})\subset H^1(\QQ_{\ell},T_{k,m})$ projects isomorphically onto $H^1_{\textup{s}}(\QQ_{\ell},T_{k,m})$ in the exact sequence
$$\xymatrix{0\ar[r]&H^1_{f}(\QQ_{\ell},T_{k,m})\ar[r]&H^1 (\QQ_{\ell},T_{k,m})\ar[r]&H^1 _{\textup{s}}(\QQ_{\ell},T_{k,m})\ar[r]&0}$$
 In other words, the sequence above has a functorial splitting
 $$H^1 (\QQ_{\ell},T_{k,m})=H^1 _{f}(\QQ_{\ell},T_{k,m})\oplus H^1_{\textup{tr}}(\QQ_{\ell},T_{k,m}).$$
\end{lemma}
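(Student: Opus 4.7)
The plan is to reproduce the argument of \cite{mr02}~Lemma~1.2.4 in our setting; all the ingredients are already in place. By Remark~\ref{rem:l-cong-to-1} combined with $p^kT_{k,m}=0$ we have $(\ell-1)T_{k,m}=0$, which is precisely the hypothesis needed to run the framework of \cite{mr02}~\S1.2 with coefficient ring $R=\Lambda/(p^k,\xx^m)$. Moreover, Lemmas~\ref{lemma:fs-explicit} and~\ref{lem:ref} already identify $H^1_{\finite}(\QQ_\ell,T_{k,m})$ and $H^1_{\textup{s}}(\QQ_\ell,T_{k,m})$ as free $R$-modules of rank one, so it suffices to analyze $H^1_{\textup{tr}}$ and compare it with $H^1_{\textup{s}}$.

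Set $F=\QQ_\ell(\mu_\ell)$ and $\Delta=\Gal(F/\QQ_\ell)$. Since $F/\QQ_\ell$ is totally tamely ramified of degree $\ell-1$, the inertia subgroup $\mathcal{I}_\ell\subset G_{\QQ_\ell}$ surjects onto $\Delta$; combined with the fact that $T$ is unramified at $\ell$, this forces $\Delta$ to act trivially on $T_{k,m}$. Together with $(\ell-1)T_{k,m}=0$, the inflation-restriction sequence then gives
$$H^1_{\textup{tr}}(\QQ_\ell,T_{k,m})=H^1(\Delta,T_{k,m}^{G_F})\cong\textup{Hom}(\Delta,T_{k,m}^{\Fr_\ell=1}),$$
which is free of rank one over $R$ by the same computation underlying Lemma~\ref{lemma:fs-explicit}(ii); in particular $H^1_{\textup{tr}}$ and $H^1_{\textup{s}}$ have the same length as $R$-modules.

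To produce the splitting I would show $H^1_{\textup{tr}}\cap H^1_{\finite}=0$. A class lying in both is inflated from $H^1(\Delta,T_{k,m}^{G_F})$ and simultaneously vanishes upon restriction to $\mathcal{I}_\ell$. But the composite
$$H^1(\Delta,T_{k,m}^{G_F})\lra H^1(\Delta,T_{k,m})\lra H^1(\mathcal{I}_\ell,T_{k,m})$$
is injective: the first arrow is $\textup{Hom}(\Delta,\cdot)$ applied to the inclusion $T_{k,m}^{G_F}\hookrightarrow T_{k,m}$ (both with trivial $\Delta$-action), and the second is the standard inflation along the surjection $\mathcal{I}_\ell\twoheadrightarrow\Delta$. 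Hence such a class must vanish, so that $H^1_{\textup{tr}}\hookrightarrow H^1(\QQ_\ell,T_{k,m})/H^1_{\finite}(\QQ_\ell,T_{k,m})=H^1_{\textup{s}}(\QQ_\ell,T_{k,m})$ is the desired isomorphism by length comparison.

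The only possible obstacle is keeping the module structures straight as one passes between $\Delta$-cohomology and $\mathcal{I}_\ell$-cohomology, and between $T_{k,m}^{G_F}$ and $T_{k,m}$ as coefficients; but this is the same bookkeeping that appears in \cite{mr02}~\S1.2 and carries over unchanged once $(\ell-1)T_{k,m}=0$ is granted and the triviality of the $\Delta$-action on $T_{k,m}$ is observed.
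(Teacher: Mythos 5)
Your argument is correct and is, in substance, exactly the argument the paper intends: the paper's own ``proof'' is the one-line remark, just before the lemma, that $(\ell-1)T_{k,m}=0$ lets all of \cite{mr02} \S1.2 (in particular Lemma~1.2.4) apply verbatim with $R=\Lambda/(p^k,\xx^m)$, and your write-up is a faithful reconstruction of that Mazur--Rubin argument. The only nitpick is the phrase ``$\Delta$ acts trivially on $T_{k,m}$'': this makes sense only once you view $\Delta$ as the quotient of $\mathcal{I}_\ell$ (not of $G_{\QQ_\ell}$, since $G_F$ does not act trivially), which is indeed how you use it in the inflation $H^1(\Delta,T_{k,m})\to H^1(\mathcal{I}_\ell,T_{k,m})$; stating that explicitly would remove any ambiguity.
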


\subsection{Cartesian Property of $\FFc(\lowercase{n})$}
Recall $\mathcal{N}_{k,m}=\{\hbox{square free products of primes in } \mathcal{P}_{k,m}\}.$  In this section, we verify that the modified Selmer structure $\FFc(n)$ (given as in Definition~\ref{modified selmer-1}) is cartesian on the collection of $\Lambda$-modules $\TT_{k,m}:=\{T_{\alpha,\beta}\}_{\substack{ \alpha \leq k\\ \beta\leq m }}$, for every $n \in \mathcal{N}_{k,m}$.
\begin{prop}
\label{cart-tr} \textup{(Compare to \cite{mr02} Lemma 3.7.4)}
For every $\ell \in \PP_{k,m}$, the transverse condition at $\ell$ is cartesian  on the collection of quotients $\TT_{k,m}$.
\end{prop}

\begin{proof}
Let $n \leq N \leq m$ and $l \leq L \leq k$ be positive integers. For $\ell \in \PP_{k,m}$, we have the following commutative diagram by Lemma~\ref{lem:trans}:

$$
\vcenter{\xymatrix @C=.22in {0\ar[r]&H^1 _{\textup{tr}}(\QQ_{\ell},T_{L,N})\ar[r]&H^1 (\QQ_{\ell},T_{L,N})\ar[r]\ar[d]&H^1_{f}(\QQ_{\ell},T_{L,N})\ar[r]\ar[d]&0\\
0\ar[r]&H^1 _{\textup{tr}}(\QQ_{\ell},T_{l,n})\ar[r]&H^1 (\QQ_{\ell},T_{l,n})\ar[r]&H^1_{f}(\QQ_{\ell},T_{l,n})\ar[r]&0
}}
$$
where the vertical maps are induced by canonical surjection $T_{L,N} \twoheadrightarrow T_{l,n}.$ This diagram shows that
$H^1 _{\textup{tr}}(\QQ_{\ell},T_{L,N})=\ker\{H^1 (\QQ_{\ell},T_{L,N})\ra H^1_{f}(\QQ_{\ell},T_{L,N})\}$
 is mapped into
 $$H^1 _{\textup{tr}}(\QQ_{\ell},T_{l,n})=\ker\{H^1 (\QQ_{\ell},T_{l,n})\lra H^1_{f}(\QQ_{\ell},T_{l,n})\}$$
 under the map $H^1 (\QQ_{\ell},T_{L,N}) \ra H^1 (\QQ_{\ell},T_{l,n})$. This shows that the transverse condition satisfies \textbf{C1$^{\prime}$}. By Lemma~\ref{lemma:fs-explicit} and \ref{lem:ref}, we have that $T_{L,N}^{\textup{Fr}_{\ell}=1}$ (resp., $T_{l,n}^{\textup{Fr}_{\ell}=1}$)
 is a free $R_{L,N}$-module (resp., $R_{l,n}$-module) of rank one. Hence the functorial map below induced from $T_{L,N} \twoheadrightarrow T_{l,n}$ is surjective:
 $$T_{L,N}^{\textup{Fr}_{\ell}=1} \cong H^1 _{\textup{tr}}(\QQ_{\ell},T_{L,N}) \otimes\mathbb{F}_\ell^\times \lra H^1 _{\textup{tr}}(\QQ_{\ell},T_{l,n}) \otimes\mathbb{F}_\ell^\times \cong T_{l,n}^{\textup{Fr}_{\ell}=1}.$$
This shows that  \textbf{C1} holds as well.

By Lemma~\ref{lem:trans}, we have the following commutative diagram, where the isomorphisms on the right come from Lemma~\ref{lemma:fs-explicit}:
\be
\label{eqn:7}
\vcenter{\xymatrix @C=.22in {H^1 (\QQ_{\ell},T_{k,n})\ar@{->>}[r]\ar[d]^{[\xx^{N-n}]}&H^1_{f}(\QQ_{\ell},T_{k,n})\ar[r]^(.44){\sim}\ar[d]^{[\xx^{N-n}]}&T_{k,n}/(\Fr_{\ell}-1)T_{k,n} \ar[d]^{[\xx^{N-n}]}\\
H^1 (\QQ_{\ell},T_{k,N})\ar@{->>}[r]&H^1_{f}(\QQ_{\ell},T_{k,N})\ar[r]^(.44){\sim} &T_{k,N}/(\Fr_{\ell}-1)T_{k,N}
}}
\ee
Since $T_{k,n}/(\Fr_{\ell}-1)T_{k,n}$ (resp., $T_{k,N}/(\Fr_{\ell}-1)T_{k,N}$) is a free $R_{k,n}$-module (resp., $R_{k,N}$-module) of rank one by Lemma~\ref{lem:ref}, and since the map $[\xx^{N-n}]:R_{k,n} \ra R_{k,N}$ is injective, it follows that the the vertical map on the right is injective. So, if $c \in H^1 (\QQ_{\ell},T_{k,n})$ and $[\xx^{N-n}]$c projects to \emph{zero} in $H^1_{f}(\QQ_{\ell},T_{k,N})$, then $c$  projects to \emph{zero} in $H^1_{f}(\QQ_{\ell},T_{k,n})$, i.e.,
$$c \in \ker\{H^1(\QQ_{\ell},T_{k,n}) \lra H^1_{f}(\QQ_{\ell},T_{k,n})\}=H^1_{\textup{tr}}(\QQ_{\ell},T_{k,n})$$
 provided $[\xx^{N-n}]c \in \ker\{H^1(\QQ_{\ell},T_{k,N}) \ra H^1_{f}(\QQ_{\ell},T_{k,N})\}=H^1_{\textup{tr}}(\QQ_{\ell},T_{k,N}).$
This shows that
$$H^1_{\textup{tr}}(\QQ_{\ell},T_{k,n})=\ker\{H^1 (\QQ_{\ell},T_{k,n}) \stackrel{[\xx^{N-n}]}{\lra} H^1(\QQ_{\ell},T_{k,N})/H^1_{\textup{tr}}(\QQ_{\ell},T_{k,N})\},$$
 which is the property \textbf{C2} for the transverse condition on $\TT_{k,m}$.

One can similarly prove, for positive integers $l \leq L \leq k$, that
$$H^1_{\textup{tr}}(\QQ_{\ell},T_{l,n})=\ker\{H^1 (\QQ_{\ell},T_{l,n}) \stackrel{[\varpi^{L-l}]}{\lra} H^1 (\QQ_{\ell},T_{L,n})/H^1_{\textup{tr}}(\QQ_{\ell},T_{L,n})\}$$
 for all $n \leq m$, which is the property \textbf{C3} for the transverse condition on $\TT_{k,m}$.
\end{proof}
Using Propositions~\ref{prop:cart},~\ref{cart-at-p} and~\ref{cart-tr} we obtain the following:
\begin{cor}
\label{cor:cartes}
For every $n \in \NN_{k,m}$, the Selmer structure $\FFc(n)$ is cartesian on $\TT_{k,m}$.
\end{cor}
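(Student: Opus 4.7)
The plan is to exploit the fact that the cartesian property is a local condition at each place of $\QQ$, so one only has to check, prime-by-prime, which of the three previous propositions (Proposition~\ref{prop:cart}, Proposition~\ref{cart-at-p}, Proposition~\ref{cart-tr}) governs the local condition of $\FFc(n)$ at that prime. Since $n \in \NN_{k,m}$ is a squarefree product of primes in $\PP_{k,m}$, none of which lie in $\Sigma(\FFc)$ (primes in $\PP_{k,m}$ are disjoint from $\Sigma(\FFc)$ by the definition of a Selmer triple, Definition~\ref{selmer triple}), one obtains a clean case split with no overlap between ``canonical'' and ``transverse'' primes.

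First I would dispose of the archimedean place: because $p>2$, the module $H^1(\RR, T_{l,\xx^{N}})$ vanishes for all $l\leq k$ and $N\leq m$, so the local condition at $\infty$ is trivially cartesian (this was already remarked in~\S\ref{subsec:cartesian}). Next, at $\ell = p$, the definition of $\FFc(n)$ (Definition~\ref{modified selmer-1}, with $a=b=1$ and $c=n$) gives $H^1_{\FFc(n)}(\QQ_p, \cdot) = H^1_{\FFc}(\QQ_p, \cdot)$, since $p \nmid n$; the cartesian property on $\TT_{k,m}\subset \TT_0$ then follows at once from Proposition~\ref{cart-at-p}.

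For a rational prime $\ell \neq p$ with $\ell \nmid n$, the local condition $H^1_{\FFc(n)}(\QQ_\ell, \cdot)$ again agrees with $H^1_{\FFc}(\QQ_\ell, \cdot)$ (whether $\ell \in \Sigma(\FFc)$ or not), so Proposition~\ref{prop:cart} supplies conditions \textbf{C1}, \textbf{C2} and \textbf{C3} on the whole collection $\TT$, and hence in particular on the sub-collection $\TT_{k,m}$. Finally, for $\ell \mid n$, one has $\ell \in \PP_{k,m}$, and $H^1_{\FFc(n)}(\QQ_\ell, \cdot) = H^1_{\textup{tr}}(\QQ_\ell, \cdot)$ by the very definition of $\FFc(n)$; Proposition~\ref{cart-tr} therefore gives the cartesian property (with the weaker functoriality condition \textbf{C1$^\prime$}) on $\TT_{k,m}$.

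There is essentially no obstacle once the three earlier propositions are in place: the corollary is a synthesis of them. The only point worth checking carefully is the hypothesis management, namely that \htam\ and \hsez\ are in force so that Propositions~\ref{prop:cart} and~\ref{cart-at-p} apply, and that Proposition~\ref{cart-tr} only requires $\ell \in \PP_{k,m}$, which is automatic for $\ell \mid n \in \NN_{k,m}$. Combining the four cases yields the cartesian property of $\FFc(n)$ on $\TT_{k,m}$, completing the proof.
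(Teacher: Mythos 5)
Your proof is correct and takes essentially the same approach as the paper: the corollary is stated in the paper without a written proof beyond the observation that it follows from Propositions~\ref{prop:cart}, \ref{cart-at-p} and \ref{cart-tr}, and your prime-by-prime case analysis is exactly the intended synthesis of those three results.
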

\begin{rem}
\label{application-cart}
Corollary~\ref{cor:cartes} has the following consequences for the Selmer modules:

Suppose $k\geq L \geq l$ and $m \geq S \geq s$ are positive integers, and $n \in \mathcal{N}_{k,m}$. Then:
\begin{enumerate}
\item[\textbf{(i)}] The image of $H^1_{\FFc(n)}(\QQ,T_{L,S})$ lies in $H^1_{\FFc(n)}(\QQ,T_{l,s})$ under the natural map $$H^1(\QQ,T_{L,S}) \lra H^1(\QQ,T_{l,s})$$
\item[\textbf{(ii)}] $H^1_{\FFc(n)}(\QQ,T_{L,s})$ is the pre-image of  $H^1_{\FFc(n)}(\QQ,T_{L,S})$ under the map   $$\xymatrix{H^1(\QQ,T_{L,s})\ar[rr]^{[\xx^{S-s}]} &&H^1(\QQ,T_{L,S})}\hbox{  and}$$
$H^1_{\FFc(n)}(\QQ,T_{l,S})$ is the pre-image of  $H^1_{\FFc(n)}(\QQ,T_{L,S})$ under the map
$$\xymatrix{H^1(\QQ,T_{l,S})\ar[rr]^{[\varpi^{L-l}]} &&H^1(\QQ,T_{L,S}).}$$
\end{enumerate}
\end{rem}
\subsection{Core Vertices for ($T_{\lowercase{k},\lowercase{m}}, \FFc,\PP _{\lowercase{k},\lowercase{m}}$)}
\label{subsec:core}
Throughout this section we assume \hone-\hfour\, as well as \htam\,and \hsez. Let $\overline{T}:=T/\mm T \cong T_{k,m}/(\mm,\xx)T_{k,m}=T_{1,1}$ be the residual representation. Fix a pair $k,m \in \ZZ^+$ until the end of \S\ref{subsec:core}.

\begin{define}
\label{define:selmergroup}
Suppose $M$ is an $\oo[[G_{\QQ}]]$-module which is finite over $\oo$. If $\FF$ is a Selmer structure on $M$, we define the Selmer group $H^1_{\FF} (\QQ,M ) \subset H ^1(\QQ, M)$ to be the kernel of the sum of restriction maps
$$H^1(\QQ_{\Sigma(\FF)}/\QQ,M) \lra \bigoplus_{\ell \in \Sigma(\FF)} H^1 (\QQ_\ell,M)/H^1_{\FF} (\QQ_\ell, M) $$
where $\QQ_{\Sigma(\FF)}$  denotes the maximal extension of $\QQ$ which is unramified outside $\Sigma(\FF)$.
\end{define}
See~\cite[\S2.3]{mr02} for a definition of the dual Selmer structure $\FF^*$\newnot{symbol:FF*} on $M^*$ and the dual Selmer group $H^1_{\FF^*}(\QQ,T^*)$.
\begin{define}
\label{def:core vertex}
We say that $n \in \NN_{k,m}$ is a \emph{core vertex} for the triple $(T_{\lowercase{k},\lowercase{m}},\FFc, \PP _{\lowercase{k},\lowercase{m}}$) if
\begin{itemize}
\item $H^1_{\FFc(n)^*}(\QQ,T_{k,m}^*)=0$,
\item $H^1_{\FFc(n)}(\QQ,T_{k,m})$ is a free $R_{k,m}$-module.
\end{itemize}
\end{define}
Thanks to~\cite[Corollary 4.1.9]{mr02}, there exists core vertices $n \in \NN_{k,m}$ for the Selmer triple $(\overline{T}=T_{1,1}, \FFc, \PP_{k,m})$.
\begin{define}\label{def:core rank}
The \emph{core Selmer rank} of the Selmer structure $\FFc$ on $T_{1,1}$ is the dimension of the $\mathbb{F}$-vector space $H^1_{\FFc(n)}(\QQ,\overline{T})$ for any core vertex $n \in \NN_{k,m}$. By~\cite[Theorem 4.1.10]{mr02} this definition makes sense. We denote the core Selmer rank for $\FFc$ on $T$ by $\chi(\overline{T})=\chi(\overline{T},\FFc)$.
\end{define}
We note that the results from~\cite{mr02} we quote above rely on the hypotheses $\mathbf{H1}$-$\mathbf{H6}$ of~\cite[\S3.5]{mr02}. These hypotheses are satisfied for the triple $(\overline{T},\FFc,\PP_{k,m})$ under our running assumptions and thanks to Propositions~\ref{prop:cart} and \ref{cart-at-p}.

We assume until the end of this paper that  $\chi(\overline{T})=1$ (except in \S\ref{sec:stark-iwasawa}, where we construct another Selmer structure with core Selmer rank one). Making use of~\cite[Corollary 4.1.9]{mr02}, we construct below \emph{core vertices} for the Selmer triple $(T_{k,m}, \FFc, \PP_{k,m})$:

\begin{thm}
\label{thm1}
Suppose $\chi(\overline{T})=1$ and $n$ is a core vertex for the triple $(\overline{T}, \FFc, \PP_{k,m})$. Then:
\begin{enumerate}
\item[\textbf{(i)}] The $R_{k,m}$-module $H^1_{\FFc(n)}(\QQ, T_{k,m})$ is free of rank one.
\item[\textbf{(ii)}] $H^1_{\FFc(n)^*}(\QQ, T_{k,m}^*)=0$.
\end{enumerate}
We say that $n$ is a core vertex  for the triple $(T_{k,m}, \FFc, \PP_{k,m})$.
\end{thm}
Theorem~\ref{thm1} is an extension of~\cite[Theorem 4.1.10]{mr02} to cover $G_\QQ$-representations over the coefficient rings $R_{k,m}$. We note that Mazur and Rubin prove their assertion~\cite[Theorem 4.1.10]{mr02} only for one-dimensional artinian coefficient rings.

Before proving Theorem~\ref{thm1}, we need several preliminary results. Recall that hypotheses \hone-\hfour\, along with \htam\,and \hsez\, are in effect throughout this section, as well as the assumption that $\chi(\overline{T})=1$.
\begin{lemma}
\label{lem:applycart}
Suppose $n \in \NN_{k,m}$. Then the injection
$$\xymatrix @C=.5in {[\varpi^{k-1}]:\,T_{1,m}=T\otimes R_{1,m}\ar[r] & T\otimes R_{k,m}=T_{k,m}}$$ induces isomorphisms
\begin{itemize}
\item[(i)] $[\varpi^{k-1}]: H^1(\QQ, T_{1,m})\stackrel{\sim}{\lra}H^1(\QQ, T_{k,m})[\mm],$
\item[(ii)] $[\varpi^{k-1}]: H^1_{\FFc(n)}(\QQ, T_{1,m})\stackrel{\sim}{\lra}H^1 _{\FFc(n)}(\QQ, T_{k,m})[\mm].$
\end{itemize}
\end{lemma}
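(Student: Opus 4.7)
The plan is to deduce both isomorphisms from the long exact cohomology sequences of two short exact sequences encoding multiplication by $p$ on $T_{k,m}$, together with the cartesian property of $\FFc(n)$ established in Corollary~\ref{cor:cartes}. Writing $q$ for reduction modulo $p^{k-1}$, these sequences are
\[
0\lra T_{1,m}\stackrel{[p^{k-1}]}{\lra} T_{k,m}\stackrel{q}{\lra}T_{k-1,m}\lra 0\qquad(\textup{A})
\]
\[
0\lra T_{k-1,m}\stackrel{[p]}{\lra} T_{k,m}\lra T_{1,m}\lra 0\qquad(\textup{B})
\]
(the case $k=1$ is trivial and can be ignored), and their composition $[p]\circ q$ is multiplication by $p$ on $T_{k,m}$.

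First I would verify that $T_{j,m}^{G_{\QQ}}=0$ for $j\in\{1,k-1\}$. I would filter $T_{j,m}=T\otimes \Lambda/(p^j,\xx^m)$ using the $\xx$-adic filtration on $\Lambda/(p^j,\xx^m)$; because $\gamma\equiv 1\pmod{\xx}$, the graded pieces $\xx^i\Lambda/\xx^{i+1}\Lambda$ carry \emph{trivial} $\Gamma$-twist, so the associated $G_{\QQ}$-modules are all copies of $T/p^jT$ with the standard action. A further filtration by powers of $p$ reduces everything to $\overline{T}=T/pT$, whose $G_{\QQ}$-invariants vanish by the absolute irreducibility hypothesis $\mathbf{H.1}$. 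The long exact sequence of (A) then produces an injection $[p^{k-1}]_{*}:H^1(\QQ,T_{1,m})\hookrightarrow H^1(\QQ,T_{k,m})$ whose image is exactly $\ker q_{*}$, and the long exact sequence of (B) shows that $[p]_{*}:H^1(\QQ,T_{k-1,m})\hookrightarrow H^1(\QQ,T_{k,m})$ is injective. Since multiplication by $p$ on $H^1(\QQ,T_{k,m})$ factors as $[p]_{*}\circ q_{*}$ with $[p]_{*}$ injective, its kernel coincides with $\ker q_{*}$; hence $H^1(\QQ,T_{k,m})[p]=\textup{image}([p^{k-1}]_{*})$, yielding the first isomorphism.

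For the Selmer version, Corollary~\ref{cor:cartes} tells us $\FFc(n)$ is cartesian on $\TT_{k,m}$, and condition $\mathbf{C3}$ of Definition~\ref{iwasawa-cartesian} says that at every place $\ell$, $H^1_{\FFc(n)}(\QQ_{\ell},T_{1,m})$ is the preimage of $H^1_{\FFc(n)}(\QQ_{\ell},T_{k,m})$ under $[p^{k-1}]_{*}$. Globally this means that $c\in H^1(\QQ,T_{1,m})$ lies in $H^1_{\FFc(n)}(\QQ,T_{1,m})$ if and only if $[p^{k-1}]_{*}c\in H^1_{\FFc(n)}(\QQ,T_{k,m})$; combined with the first isomorphism, $[p^{k-1}]_{*}$ restricts to an isomorphism $H^1_{\FFc(n)}(\QQ,T_{1,m})\xrightarrow{\sim} H^1_{\FFc(n)}(\QQ,T_{k,m})\cap H^1(\QQ,T_{k,m})[p]=H^1_{\FFc(n)}(\QQ,T_{k,m})[p]$. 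The main technical subtlety is the $H^0$-vanishing step; beyond that, everything is a formal consequence of exact-sequence manipulation and the cartesian framework.
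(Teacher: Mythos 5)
Your proof follows essentially the same route as the paper's: the two short exact sequences in $p$-power multiplication, the observation that $H^0$ vanishes for the relevant quotients, and the cartesian property $\mathbf{C3}$ for the Selmer version. The one place you diverge is in how you establish $T_{j,m}^{G_{\QQ}}=0$: the paper observes that $\mathbb{T}/\mathcal{M}\mathbb{T}=T/pT$ has trivial $G_{\QQ}$-invariants and then invokes Lemma~2.1.4 of~\cite{mr02} (which propagates this vanishing to all subquotients of $\mathbb{T}$), whereas you carry out the explicit double filtration by $\xx$-adic and $p$-adic layers yourself. Both are fine, and your version is more self-contained. However, be careful with your attribution of $\overline{T}^{G_{\QQ}}=0$ to $\mathbf{H.1}$ alone: absolute irreducibility does not by itself rule out $\overline{T}$ being the trivial one-dimensional $\mathbb{F}_p$-representation, which would have nonzero invariants. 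The paper cites $\mathbf{H.3}$ for this vanishing; it is the combination of the running hypotheses (in particular $\mathbf{H.3}$, which would fail for the trivial residual representation since $\hbox{Hom}(G_{\QQ(T,\mu_{p^\infty})},\mathbb{F}_p)\neq 0$) that actually gives you $\overline{T}^{G_{\QQ}}=0$. Everything else in your argument is sound and matches the paper's proof.
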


\begin{proof}
Set $\mathbb{T}:=T\otimes\Lambda$ and let $\mathbf{\mathcal{M}} :=(\mm,\xx)$ be the maximal ideal of $\Lambda$. Then, $\mathbb{T}/\mathbf{\mathcal{M}}\mathbb{T}=T/\mm T$, and since we assumed \hthree, it follows that $(\mathbb{T}/\mathbf{\mathcal{M}}\mathbb{T})^{G_{\QQ}}=0$. By \cite[Lemma 2.1.4]{mr02}, we have $S^{G_{\QQ}}=0$ for every subquotient $S$ of $\mathbb{T}$ as well. In particular, $T_{k,m}^{G_{\QQ}}=0$ for every $k,m \in \ZZ^+$.

Now the $G_{\QQ}$-cohomology of the exact sequences
\begin{equation*}\vcenter{\xymatrix@R=.1in{
0\ar[r]&T_{1,m}\ar[r]^{\varpi^{k-1}}& T_{k,m}\ar[r] &T_{k-1,m}\ar[r]& 0\\
&&0\ar[r]&  T_{k-1,m} \ar[r]^{\varpi}&T_{k,m}
}}
\end{equation*}
gives (together with the vanishing of $S^{G_{\QQ}}$ for the relevant quotients $S$ of $\mathbb{T}$)
\begin{equation}
\label{eqn:starstar}
\vcenter{
\xymatrix @C=.35in@R=.1in {
0\ar[r]&H^1(\QQ,T_{1,m})\ar[r]^{[\varpi^{k-1}]}& H^1(\QQ,T_{k,m})\ar[r]^(.47){\mathfrak{R}} &H^1(\QQ,T_{k-1,m})\\
&&0\ar[r]&  H^1(\QQ,T_{k-1,m}) \ar[r]^(.52){[\varpi]}&H^1(\QQ,T_{k,m})
}
}
\end{equation}
This shows
\[
\begin{array}{ccl}
H^1(\QQ,T_{1,m})&=&\ker(\mathfrak{R})\\
									&=&\ker([\varpi]\circ\mathfrak{R}) \\
									&=&H^1(\QQ,T_{k,m})[\mm].
\end{array}
\]
The second equality is because $[\varpi]$ is injective on $H^1(\QQ,T_{k-1,m})$ thanks to the second exact sequence in~(\ref{eqn:starstar}), and the last equality is because $[\varpi]\circ \mathfrak{R}$ on $H^1(\QQ,T_{k,m})$ is multiplication by $\varpi$).

This proves (i). To finish the proof of Lemma, we need to show that $H^1_{\FFc(n)}(\QQ, T_{1,m})$ maps isomorphically onto $H^1_{\FFc(n)}(\QQ, T_{k,m})[\mm]$ under the isomorphism of (i). This follows at once from Remark~\ref{application-cart}.
\end{proof}

\begin{rem}
\label{rem:cartx} Starting from the exact sequences
 \begin{equation*}\vcenter{\xymatrix@R=.1in{
0\ar[r]&T_{k,1}\ar[r]^{\xx^{m-1}}& T_{k,m}\ar[r] &T_{k,m-1}\ar[r]& 0\\
&&0\ar[r]&  T_{k,m-1} \ar[r]^{\xx}&T_{k,m}
}}
\end{equation*}
one may similarly prove that $$[\xx^{m-1}]: H^1_{\FFc(n)}(\QQ, T_{k,1}) \stackrel{\sim}{\lra} H^1_{\FFc(n)}(\QQ, T_{k,m})[\xx]$$ 
for every $k,m \in \ZZ^+$.
\end{rem}

Write $\mathcal{M}_{k,m}=(\mm,\xx)$ for the maximal ideal of $R_{k,m}$. Since
 $$ H^1_{\FFc(n)}(\QQ, T_{k,m})[\mathcal{M}]=\{H^1_{\FFc(n)}(\QQ, T_{k,m})[\mm]\}[\xx],$$
  it follows from  Lemma~\ref{lem:applycart} and Remark~\ref{rem:cartx}  that
\be
\label{comparison}
H^1_{\FFc(n)}(\QQ,T_{k,m})[\mathcal{M}] \cong H^1_{\FFc(n)}(\QQ, \overline{T}).
\ee

\begin{prop}
\label{lem:upper}
Suppose $n \in \NN_{k,m}$ is a core vertex for $(\overline{T}, \FFc, \PP_{k,m})$ and $\chi(\overline{T},\FFc)=1$. Then the $R_{k,m}$-module $\textup{Hom}\left(H^1 _{\FFc(n)}(\QQ, T_{k,m}),\Phi/\oo \right)$ is cyclic.
\end{prop}

\begin{proof}
By the assumption on $n$, it follows that the $\mathbb{F}$-vector space $H^1_{\FFc(n)}(\QQ, T_{1,1})= H^1_{\FFc(n)}(\QQ,\overline{T})$ is  one-dimensional. Furthermore, by Remark~\ref{rem:cartx} applied with $k=1$ and using the fact that $R_{1,m}$ is a principal ideal ring, it follows that $H^1_{\FFc(n)}(\QQ, T_{1,m})$ is a cyclic $R_{1,m}$-module:
$$H^1 _{\FFc(n)}(\QQ, T_{1,m}) \cong \mathbb{F}[[\xx]]/\xx^{\alpha}, \hbox{  for some }  \alpha \leq m  \hbox{ (as an } R_{1,m}\hbox{-module)}.$$

By Lemma~\ref{lem:applycart}(ii), we have
\be
\label{eqn:upper1}
H^1 _{\FFc(n)}(\QQ, T_{k,m})[\mm] \cong H^1 _{\FFc(n)}(\QQ, T_{1,m}).
\ee

Taking the Pontryagin duals of both sides in~(\ref{eqn:upper1}), we obtain
\[
\begin{array}{rl}
\textup{Hom}\left(H^1 _{\FFc(n)}(\QQ, T_{k,m}),\Phi/\oo \right) \otimes\oo/\mm&\cong \textup{Hom}\left(H^1_{\FFc(n)}(\QQ, T_{1,m}),\mathbb{F} \right)\\
&\cong \textup{Hom}\left(\mathbb{F}[[\xx]]/\xx^{\alpha}, \mathbb{F} \right)\\
&\cong \mathbb{F}[[\xx]]/\xx^{\alpha}
\end{array}
\]
which shows that $\textup{Hom}\left(H^1 _{\FFc(n)}(\QQ, T_{k,m}),\Phi/\oo\right)\big{/}\mm\cdot\textup{Hom}\left(H^1 _{\FFc(n)}(\QQ, T_{k,m}),\Phi/\oo \right)$ is generated by one element as an $R_{1,m}$-module. By Nakayama's lemma, it now follows that the $R_{k,m}$-module $\textup{Hom}\left(H^1 _{\FFc(n)}(\QQ, T_{k,m}),\Phi/\oo\right)$ is generated by one element.
\end{proof}

\begin{prop}
\label{lem:lower}
Under the hypotheses and the notation of Proposition~\ref{lem:upper},
$$
\length_{\oo}\,H^1_{\FFc(n)}(\QQ, T_{k,m})- \length_{\oo}\,H^1_{\FFc(n)^*}(\QQ, T_{k,m}^*)=k\cdot m.
$$
\end{prop}

\begin{proof}(All the \emph{length}s in this proof are measured over $\oo$.)

Fix $m$, and let $R_k$ denote the ring $\oo/\mm^k$. Using~\cite[Corollary 2.3.6]{mr02}, it suffices to prove Lemma only for $n=1$, i.e., for $\FFc(n)=\FFc$. By \cite[Theorem 4.1.5]{mr02}, there are integers $r_k,\,s_k$, one of which can be taken to be zero, such that, there is an isomorphism
$$H^1 _{\FFc}(\QQ, T_{k,m})\oplus R_k^{r_k}\cong H^1_{\FFc^*}(\QQ, T_{k,m}^*) \oplus R_k^{s_k}.$$
 Here we regard $T_{k,m}$ as an $R_k$-representation. We note that proof of~\cite[Theorem 4.1.5]{mr02} still holds true in our case by Proposition~\ref{prop:cart} and Lemma~\ref{lem:applycart}. The proof of the same theorem in fact shows that $r_k,\,s_k$ do not depend on $k$, we denote this common value by $\chi(T_{m}^*),\,\chi(T_m)$, respectively. Hence, there is an isomorphism
\be
\label{eqn:structure}
H^1_{\FFc}(\QQ, T_{k,m})\oplus R_k^{\chi(T_{m}^*)}\cong H^1_{\FFc^*}(\QQ, T_{k,m}^*) \oplus R_k^{\chi(T_m)}.
\ee
Passing to inverse limit in (\ref{eqn:structure}) (by making use of Remark~\ref{application-cart}), we see that
$$\chi(T_m)-\chi(T_m^*)=\textup{rank}_{\oo}\,H^1 _{\FFc}(\QQ, T_m)-\textup{corank}_{\oo}\,H^1_{\FFc^*}(\QQ, T_m^*).$$
 On the other hand, by \cite[Lemma 5.2.15]{mr02}, it follows that
 \be\label{eqn:array}\begin{array}{rcl}
 \textup{rank}_{\oo}\,H^1 _{\FFc}(\QQ, T_m)-\textup{corank}_{\oo}\,H^1_{\FFc^*}(\QQ, T_m^*)&=&\textup{rank}_{\oo}\,T_m^- + \textup{corank}_{\oo}\,H^0(\QQ_p,T_m^*)\\
&=&m\cdot\textup{rank}_{\oo}\,T^- + \textup{corank}_{\oo}\, H^0(\QQ_p,T_m^*),
\end{array}
\ee
where for a $G_\QQ$-module $M$, we write $M^-$ for the $(-1)$-eigenspace for an arbitrary complex conjugation $\mathfrak{c}\in G_\QQ$. The second equality in (\ref{eqn:array}) is because the complex conjugation acts trivially on $\Lambda/\xx^m$. Hence,
 \begin{align*} \textup{length}_{\oo}\,H^1 _{\FFc}(\QQ,T_{k,m})-\textup{length}_{\oo}\,H^1_{\FFc^*}(\QQ,T_{k,m}^*)
&=k\left(\chi(T_m)-\chi(T_m^*)\right)\\
&=k\cdot m\cdot \textup{rank}_{\ZZ_p}T^- + k\cdot\textup{corank}_{\ZZ_p}H^0(\QQ_p,T_m^*)
\end{align*}
By local Tate duality, $H^0(\QQ_p,T_m^*)$ is dual to $H^2(\QQ_p,T_m)$, and
$$H^2(\QQ_p,T_m)=H ^2(\QQ_p,T\otimes\Lambda/\xx^m) \cong H^2(\QQ_p,{T\otimes\LL})/\xx^m H ^2(\QQ_p,{T\otimes\LL})$$
 because $G_{\QQ_p}$ has cohomological dimension 2. However, $H^2(\QQ_p,{T\otimes\LL})=0$ by Lemma \ref{h2-vanishing}, which in turn shows that $\textup{corank}_{\oo}\, H^0(\QQ_p,T_m^*)=0$ for all $m$. We therefore conclude that
 \begin{align}
 \label{align:lower}
 \textup{length}_{\oo}\,H^1_{\FFc}(\QQ,T_{k,m})- \textup{length}_{\oo}\,H^1_{\FFc^*}(\QQ,T_{k,m}^*)
=k\cdot m\cdot\textup{rank}_{\oo}\,T^-.
\end{align}
The equation (\ref{align:lower}) for $m=1$, together with our assumption that $\chi(\overline{T})=1$ imply that $\textup{rank}_{\oo}\,T^-=1$. Proof of Lemma follows.
\end{proof}
\begin{rem}
\label{generic}
For any height one prime $\wp$ of $\Lambda$ which is not exceptional in the sense of~\cite[Definition 5.3.12]{mr02}, one may prove that the core Selmer rank $\XX(T\otimes S_{\wp}, \FFc)$ for the canonical Selmer structure on $T\otimes S_{\wp}$ does not depend on $\wp$ (see \cite[Lemma 5.3.16]{mr02}). Here, $S_{\wp}$ is the integral closure of $\LL/\wp$ inside its field of fractions $\textup{Frac}(\LL/\wp)$. This common value is called the \emph{generic core Selmer rank} of the canonical Selmer structure $\FFc$ on $T\otimes\LL$. In particular, our assumption that the core Selmer rank $\XX(\overline{T},\FFc)=1$, together with the hypothesis \hez\, is equivalent to the assumption that the generic core Selmer rank equals \emph{one}.

\end{rem}
\begin{proof}[Proof of Theorem~\ref{thm1}]
Immediate from Propositions~\ref{lem:upper} and \ref{lem:lower}.
\end{proof}


\section{$\Lambda$-adic Kolyvagin Systems}
\label{chptr:KS}
Many of the arguments in this section are essentially present in~\cite[\S4.3 and Appendix B]{mr02}. We still include every detail for completeness.
\subsection{Kolyvagin Systems over  $R_{\lowercase{k},\lowercase{m}}$}
\label{KS1}
\subsubsection{Generalities}
Let $R$ be a local artinian ring with finite residue field. If $M$ is an $R$-module and $\psi \in \textup{Hom}(M,R)$, we define for any integer $r$ a map $\stackrel{r}{\bigwedge}M \ra \stackrel{r-1}{\bigwedge} M$, also denoted by $\psi$,  by setting
 $$\psi:\, m_1 \wedge \dots \wedge m_r \mapsto \sum _{i=1}^{r}(-1)^{i+1}\psi(m_i)m_1\wedge \dots m_{i-1}\wedge m_{i+1} \wedge \dots\wedge m_r .$$
We also define a map $\stackrel{s}{\bigwedge}\hbox{Hom}(M,R) \ra \hbox{Hom}(\stackrel{r}{\bigwedge}M,\stackrel{r-s}{\bigwedge}M)$ for $s\leq r$, by iterating the construction above: $\psi_1\wedge\dots\wedge\psi_s:=\psi_s\circ\dots\circ\psi_1.$

Lemma~\ref{lemma1} below is one of the main technical tools we utilize in this section. It generalizes~\cite[Lemma B.1]{mr02} to apply with coefficient rings which are artinian (and not necessarily principal artinian, as assumed in loc.cit.). However, Lemma~\ref{lemma1} claims less than~\cite[Lemma B.1]{mr02}, e.g., Lemma~\ref{lemma1} does not determine the exact image of the map $\Psi$ below unless the map $\psi$ defined below is surjective. 
\begin{lemma}
\label{lemma1}
Suppose $M$ is a free $R$-module of rank at least $r+1$ and  $\psi_{1},\dots, \psi_{r} \in \textup{Hom}(M,R)$. Define:
\[
\begin{array}{lll}
\psi&=&\psi_{1}\oplus\dots\oplus\psi_{r}:\,M\lra R^r \\
\Psi&=&\psi_{1}\wedge\dots\wedge\psi_{r}:\, \stackrel{r+1}{\bigwedge} M\lra M
\end{array}
\]
Then
\[
\begin{array}{lrcl}
&\Psi\big(\stackrel{r+1}{\bigwedge} M\big) &\subset& \ker(\psi), \hbox{ and, }\\
&&=&\ker(\psi) \hbox{ if } \psi \hbox{ is surjective.}
\end{array}
\]
\end{lemma}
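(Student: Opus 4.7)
The plan is to prove both inclusions using standard properties of the interior product / contraction operators $\psi_i : \bigwedge^{\bullet} M \to \bigwedge^{\bullet - 1} M$. The key identities are the anticommutation relations
\[
\psi_i \psi_j + \psi_j \psi_i = 0 \quad \text{on } \bigwedge^{\bullet} M,
\]
which in particular give $\psi_i^2 = 0$. These follow from a direct check on decomposable elements using the defining formula given just before the lemma.

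For the first inclusion $\Psi\bigl(\bigwedge^{r+1} M\bigr) \subset \ker(\psi)$, I would verify that $\psi_j \circ \Psi = 0$ for each $j = 1, \dots, r$. Using the anticommutation relations, I can move $\psi_j$ past $\psi_1, \dots, \psi_{j-1}$ in the composition $\psi_j \psi_1 \psi_2 \cdots \psi_r$, picking up a sign $(-1)^{j-1}$, until it stands immediately to the left of $\psi_j$ itself; the resulting composition contains the factor $\psi_j^2 = 0$ and hence vanishes.

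For the reverse inclusion under the surjectivity hypothesis, I would first use that $R^r$ is free (hence projective) so that $\psi : M \twoheadrightarrow R^r$ splits, giving a decomposition $M = \ker(\psi) \oplus N$ with $\psi|_N : N \xrightarrow{\sim} R^r$. Choose $n_1, \dots, n_r \in N$ with $\psi_i(n_j) = \delta_{ij}$. Given any $v \in \ker(\psi)$, I would compute $\Psi(v \wedge n_1 \wedge \cdots \wedge n_r)$ directly from the iterated contraction formula: each term in the expansion that contracts one of the $n_j$'s first leaves behind a factor in which some $\psi_i$ must eventually be evaluated on $v$, which is zero; the only surviving contributions come from the pattern that leaves $v$ untouched and contracts the $n_j$'s against the $\psi_i$'s. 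By the identity $\psi_i(n_j)=\delta_{ij}$ this surviving contribution equals $\det(\psi_i(n_j))_{i,j} \cdot v = \pm v$, so $v$ (or $-v$, in which case we adjust $\omega$ by a sign) lies in the image of $\Psi$.

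The main obstacle is keeping the sign bookkeeping clean in the direct calculation for the second inclusion; the cleanest route is to avoid writing out the full Laplace-type expansion by instead arguing inductively in $r$: applying $\psi_r$ to $v \wedge n_1 \wedge \cdots \wedge n_r$ kills every term involving an $n_j$ with $j < r$ (since $\psi_r(n_j) = 0$ there) and the term involving $v$ (since $\psi_r(v) = 0$), leaving up to sign $v \wedge n_1 \wedge \cdots \wedge n_{r-1}$; then iterate with $\psi_{r-1}, \dots, \psi_1$. This reduces the problem to a sign computation that is a clean power of $-1$, giving the desired conclusion.
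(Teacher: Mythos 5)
Your proof is correct, and the first inclusion uses a genuinely different route from the paper. The paper verifies $\psi_s \circ \Psi = 0$ by writing out the full expansion of $\Psi(m_1 \wedge \cdots \wedge m_{r+1})$ and exhibiting, for each monomial, a second monomial with opposite sign (via an auxiliary permutation $\tau = \sigma \circ \alpha$). You instead invoke the Clifford-type anticommutation relations $\psi_i \psi_j + \psi_j \psi_i = 0$ (together with $\psi_i^2 = 0$, which holds directly on decomposables even without dividing by $2$), and then observe that $\psi_j \circ \Psi$ contains a repeated contraction $\psi_j$, so it vanishes after transposing $\psi_j$ inward. This is a structural reformulation of the same cancellation and trades the paper's delicate sign-chase for a one-line identity; it is arguably cleaner, at the cost of first establishing (or citing) the anticommutation identity, which the paper avoids assuming. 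For the second inclusion both arguments begin by splitting $M = \ker(\psi) \oplus B$ using projectivity of $R^r$; the paper then packages the rest as a K\"unneth-type decomposition $\bigwedge^{r+1}M \cong \bigoplus_{p+q=r+1}\bigl(\bigwedge^p A \otimes \bigwedge^q B\bigr)$ with $\Psi$ mapping the $A \otimes \bigwedge^r B$ summand isomorphically onto $A$, whereas you contract $v \wedge n_1 \wedge \cdots \wedge n_r$ directly against the dual basis, iterating $\psi_r, \psi_{r-1}, \ldots, \psi_1$. These are essentially the same computation in different notation; both correctly show every $v \in \ker(\psi)$ is (up to the sign of the chosen generator of $\bigwedge^r B$) in the image of $\Psi$.
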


\begin{proof}

Suppose first that $\psi$ is surjective with kernel $A\subset M$. Then the image of $\psi$ is $R^r$, which is a  projective $R$-module, hence the exact sequence
$$0\lra A \lra M \stackrel{\psi}{\lra} R^r \lra 0$$
 splits. Therefore, there is a submodule $B$ of $M$ which complements $A$, i.e.,  $M=A \oplus B$ and $\psi$ maps $B$ isomorphically onto $R^r$. We have a functorial isomorphism
\be
\label{eqn:id}
\stackrel{r+1}{\bigwedge}M \cong \bigoplus_{p+q=r+1}\left(\stackrel{p}{\bigwedge}A\,\,\otimes\stackrel{q}{\bigwedge}B\right).
\ee
The map $\Psi$, after the identification (\ref{eqn:id}), takes the factor $A\otimes \stackrel{r}{\bigwedge}B$ isomorphically onto $A$ and kills the other summands. This proves the equality when $\psi$ is surjective.

To prove the containment in general, we need to check that the map
$$\psi_s\circ\Psi:\, \stackrel{r+1}{\bigwedge}M \lra R$$
 is identically \emph{zero} for all $s=1,\dots,r$. We will only check this when $s=1$,  and the other cases follow in an identical fashion.

A simple combinatorial argument shows that
\begin{align*}\Psi(m_1\wedge&\dots \wedge m_{r+1}) = (\psi_{1}\wedge\dots\wedge\psi_{r})(m_1 \wedge \dots \wedge m_{r+1})=\\ &\sum_{i=1}^{r+1}\sum_{\sigma}(-1)^{i}\textup{sign}(\sigma)\cdot\psi_1(m_{\sigma(1)})\dots \psi_{i-1}(m_{\sigma(i-1)})\cdot\psi_i(m_{\sigma(i+1)})\dots\psi_r(m_{\sigma(r+1)})\cdot m_i
\end{align*}
where the second sum is over all permutations $\sigma$ 
of $\{1,\dots,i-1,i,i+1,\dots,r+1\}$ such that $\sigma(i)=i$. Then
\begin{align*}
\psi_1\circ\Psi&(m_1\wedge\dots\wedge m_{r+1})=\\
&\sum_{i=1}^{r+1}\sum_{\sigma}(-1)^{i}\textup{sign}(\sigma)\cdot\psi_1(m_{\sigma(1)})\dots \psi_{i-1}(m_{\sigma(i-1)})\cdot\psi_i(m_{\sigma(i+1)})\dots\psi_r(m_{\sigma(r+1)})\cdot \psi_1(m_i).
\end{align*}

We fix $i$. Then the expression  $\psi_1(m_{\sigma(1)})\cdots \psi_{i-1}(m_{\sigma(i-1)})\cdot\psi_i(m_{\sigma(i+1)})\cdots\psi_r(m_{\sigma(r+1)})\cdot \psi_1(m_i)$ appears exactly twice in the final displayed equality above. We now identify these terms and show that they will have opposite signs. This will conclude that $\psi_1\circ\Psi(m_1\wedge\dots\wedge m_{r+1})=0$.

Let $\sigma(1)=j$, and let $$\alpha= \left \{
\begin{array}{ccc}
	 (1,i,i+1,\dots,j)& ,& \hbox{if } j>i  \\
          (1,i,i-1,\dots,j)&,& \hbox{if } j<i
\end{array}
\right.$$
be a cycle in the symmetric group $\mathfrak{S}_{r+1}$. Set $\tau=\sigma \circ \alpha$. Then the term $$\psi_1(m_{\sigma(1)})\dots \psi_{i-1}(m_{\sigma(i-1)})\cdot\psi_i(m_{\sigma(i+1)})\dots\psi_r(m_{\sigma(r+1)})\cdot \psi_1(m_i)$$ appears once for the permutation $\sigma$ itself (with the sign $(-1)^i\textup{sign}(\sigma)$) and for the permutation $\tau$ (with the sign $(-1)^{j}\textup{sign}(\tau)$). Since $\textup{sign}(\tau)=\textup{sign}(\sigma)\textup{sign}(\alpha)=\textup{sign}(\sigma)(-1)^{i-j+1}$, it follows that $(-1)^{j}\textup{sign}(\tau)=(-1)^{i+1}\textup{sign}(\sigma)$. This shows that
 $$\psi_1\circ\Psi(m_1\wedge\dots\wedge m_{r+1})=0.$$ 

\end{proof}
\subsubsection{The lower bound}
\label{sec:lowerbound}
We keep the notation above. Let $\nu(n)$  be the number of prime divisors of $n$. In this section, we obtain (Theorem~\ref{KS}) a lower bound on the size of the $R_{k,m}$-module of Kolyvagin Systems.
\begin{define}
\label{simplicial sheaf}
If $X$ is a graph and $\textup{Mod}_{R_{k,m}}$ is the category of $R_{k,m}$-modules, a \emph{simplicial sheaf} $\mathcal{S}$\newnot{symbol:calS} on $X$ with values in $\textup{Mod}_{R_{k,m}}$ is a rule assigning
\begin{itemize}
\item an $R_{k,m}$-module $\mathcal{S}(v)$ for every vertex $v$ of $X$,
\item an $R_{k,m}$-module $\mathcal{S}(e)$ for every edge $e$ of $X$,
\item an $R_{k,m}$-module homomorphism $\psi_{v}^e:\,\mathcal{S}(v)\ra\mathcal{S}(e)$ whenever the vertex $v$ is an endpoint of the edge $e$.
\end{itemize}
A \emph{global section} of $\mathcal{S}$ is a collection $\{\kappa_v \in \mathcal{S}(v): v \hbox{ is a vertex of } X\}$ such that, for every edge $e=\{v,v^{\prime}\}$ of $X$, we have  $\psi_{v}^e(\kappa_v)=\psi_{v^{\prime}}^e(\kappa_{v^{\prime}})$ in $\mathcal{S}(e)$. We write $\Gamma(\mathcal{S})$ for the $R_{k,m}$-module of global sections of $\mathcal{S}$.
\end{define}

\begin{define}
\label{selmer sheaf}
For the Selmer triple $(T_{k,m},\FFc,\PP_{k,m})$, we define a graph $\XX_{k,m}=\XX(\PP_{k,m})$ by taking the set of vertices of $\XX_{k,m}$ to be $\NN_{k,m}$, and the edges to be $\{n,n\ell\}$ whenever $n,n\ell \in \NN_{k,m}$ (with $\ell$ prime).

The \emph{Selmer sheaf} $\mathcal{H}$\newnot{symbol:calH} is the simplicial sheaf on $\XX_{k,m}$ given as follows. Set $G_n:=\otimes_{\ell | n} \mathbb{F}_{\ell}^\times$. We take
 \begin{itemize}
\item $\mathcal{H}(n) := H^1_{\FFc(n)}(\QQ,T_{k,m}) \otimes G_n$ for $n\in \NN_{k,m}$,
\item  if $e$ is the edge $\{n,n\ell\}$ then $\mathcal{H}(e):= H^1_{s}(\QQ_{\ell},T_{k,m})\otimes G_{n\ell}$.
\end{itemize}
 We define the vertex-to-edge maps to be
\begin{itemize}
\item $\psi_{n\ell}^{e}:\,H^1_{\FFc(n\ell)}(\QQ,T_{k,m}) \otimes G_{n\ell} \ra H^1_{s}(\QQ_{\ell},T_{k,m})\otimes G_{n\ell}$ is localization followed by the projection to the singular cohomology $H^1_s(\QQ_\ell,T_{k,m})$.
\item $\psi_{n}^{e}:\,H^1_{\FFc(n)}(\QQ,T_{k,m}) \otimes G_{n} \ra H^1_{s}(\QQ_{\ell},T_{k,m})\otimes G_{n\ell}$ is the composition of localization at $\ell$ with the finite-singular comparison map $\phi^{\textup{fs}}_{\ell}$.
\end{itemize}
 \end{define}

 A \emph{Kolyvagin system} for the triple $(T_{k,m},\FFc, \PP_{k,m})$ is simply a global section of the Selmer sheaf $\mathcal{H}$.

Recall the definition of the modified Selmer structure $\FFc^n$ on $T_{k,m}$ (Definition~\ref{modified selmer-1}).
\begin{prop}
\label{thm2}
If $n \in \NN_{k,m}$ is a core vertex for $(T_{k,m}, \FFc, \PP_{k,m})$, then the $R_{k,m}$-module $H^1_{\FFc^{\,n}}(\QQ,T_{k,m})$ is free of rank $\nu(n)+1$.
\end{prop}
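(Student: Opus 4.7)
The plan is to exploit Poitou--Tate global duality to relate $H^1_{\FFc^{\,n}}(\QQ, T_{k,m})$ to the already-understood $H^1_{\FFc(n)}(\QQ, T_{k,m})$, reading $\FFc^{\,n}$ as the modification of $\FFc$ whose local condition at each $\ell \mid n$ is the full local cohomology $H^1(\QQ_\ell, T_{k,m})$ (so that $\FFc(n) \subset \FFc^{\,n}$ as Selmer structures; in particular the rank $\nu(n)+1$ in the conclusion is the only reading of the statement consistent with the freeness assertion, since the strict modification would land inside the rank-one module $H^1_{\FFc(n)}(\QQ, T_{k,m})$).

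First I would apply the five-term global duality exact sequence of~\cite{mr02}~Theorem~2.3.4 to the inclusion $\FFc(n) \subset \FFc^{\,n}$. Since $n$ is a core vertex, Theorem~\ref{thm1}(ii) gives $H^1_{\FFc(n)^*}(\QQ, T^*_{k,m}) = 0$, and the long sequence collapses to
\begin{equation*}
0 \to H^1_{\FFc(n)}(\QQ, T_{k,m}) \to H^1_{\FFc^{\,n}}(\QQ, T_{k,m}) \to \bigoplus_{\ell\mid n}\frac{H^1(\QQ_\ell, T_{k,m})}{H^1_{\textup{tr}}(\QQ_\ell, T_{k,m})} \to 0.
\end{equation*}

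Next I would identify each term as a free $R_{k,m}$-module. The kernel is free of rank one by Theorem~\ref{thm1}(i). For the quotient, Lemma~\ref{lem:trans} gives the functorial splitting $H^1(\QQ_\ell, T_{k,m}) = H^1_{\textup{f}}(\QQ_\ell, T_{k,m}) \oplus H^1_{\textup{tr}}(\QQ_\ell, T_{k,m})$, so $H^1(\QQ_\ell, T_{k,m})/H^1_{\textup{tr}}(\QQ_\ell, T_{k,m})$ is canonically identified with $H^1_{\textup{f}}(\QQ_\ell, T_{k,m})$, which is free of rank one over $R_{k,m}$ by Lemma~\ref{lem:ref}. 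Summing over the $\nu(n)$ primes dividing $n$, the quotient is free of rank $\nu(n)$.

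Finally, since the quotient is free, hence projective, the short exact sequence splits as $R_{k,m}$-modules, exhibiting $H^1_{\FFc^{\,n}}(\QQ, T_{k,m})$ as the direct sum of a free module of rank one and a free module of rank $\nu(n)$, hence free of rank $\nu(n)+1$. The one non-routine point is the legitimacy of invoking the Mazur--Rubin global duality formalism of~\cite{mr02}~\S2.3 in our Artinian-coefficient Iwasawa setup; this is warranted because the Selmer structures $\FFc(n)$ and $\FFc^{\,n}$ are cartesian on $\TT_{k,m}$ by Corollary~\ref{cor:cartes} (the relaxed modification is immediate), and the transverse condition is self-dual under the local Tate pairing at Kolyvagin primes, so the dual-Selmer computation used in the fourth term is exactly $H^1_{\FFc(n)^*}$.
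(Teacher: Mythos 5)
Your proposal is correct and follows essentially the same route as the paper's proof: both apply Poitou--Tate global duality to the pair $\FFc(n) \subseteq \FFc^{\,n}$, use the vanishing $H^1_{\FFc(n)^*}(\QQ, T^*_{k,m}) = 0$ from Theorem~\ref{thm1} to obtain the short exact sequence, and then identify the kernel as free of rank one and the quotient as free of rank $\nu(n)$ via Lemma~\ref{lem:trans} and Lemma~\ref{lem:ref}. Your preliminary remark disambiguating the reading of $\FFc^{\,n}$ as the relaxed-at-$n$ modification is a sensible observation, and the rest is the same argument as in the paper.
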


\begin{proof}
We have the following exact sequences:\\
\begin{equation*}
0 \lra H^1_{\FFc(n)}(\QQ,T_{k,m})\lra H^1_{\FFc^{n}}(\QQ,T_{k,m})\lra \bigoplus_{\ell \mid n}\frac{ H^1(\QQ_{\ell},T_{k,m})}{H^1_{\textup{tr}}(\QQ_{\ell},T_{k,m})}
\end{equation*}

$$
0 \lra H^1_ {(\FFc^{n})^*}(\QQ,T_{k,m}^*)\lra H^1_{\FFc(n)^*}(\QQ,T_{k,m}^*)\lra \bigoplus_{\ell \mid n} H^1_{\textup{tr}}(\QQ_{\ell},T_{k,m}^*)
$$

We have  $H^1_{\FFc(n)^*}(\QQ,T_{k,m}^*)=0$ since $n\in \NN_{k,m}$ is a core vertex, hence the Poitou-Tate global duality theorem (see \cite[Theorem 1.3.5]{r00} and \cite[Theorem 2.3.4]{mr02}) implies that the right-most map in the first sequence is surjective. Thus the sequence
\begin{equation*}
0 \lra H^1_{\FFc(n)}(\QQ,T_{k,m})\lra H^1_{\FFc^{n}}(\QQ,T_{k,m})\lra \bigoplus_{\ell \mid n}\frac{H^1(\QQ_{\ell},T_{k,m})}{H^1_{\textup{tr}}(\QQ_{\ell},T_{k,m})}\lra 0
\end{equation*}\\
is exact. By Theorem~\ref{thm1}, the $R_{k,m}$-module $H^1_{\FFc(n)}(\QQ,T_{k,m})$ is free of rank one, and the sum $\bigoplus_{\ell \mid n}\frac{ H^1(\QQ_{\ell},T_{k,m})}{H^1_{\textup{tr}}(\QQ_{\ell},T_{k,m})}$ is free (hence projective) of rank $\nu(n)$ by Lemma~\ref{lem:ref} and Lemma~\ref{lem:trans}. This completes the proof.
\end{proof}
Fixing a generator of $\textup{Gal}(\QQ(\mu_{\ell})/\QQ)$, we may view (by Lemma~\ref{lem:ref} and Lemma~\ref{lem:trans}) the finite-singular comparison map as an isomorphism
$$H^1_{f}(\QQ_{\ell},T_{k,m}) \stackrel{\phi_{\ell}^{\textup{fs}}}{\lra}H^1_{\textup{s}}(\QQ_{\ell},T_{k,m}) \cong H^1_{\textup{tr}}(\QQ_{\ell},T_{k,m}).$$
 For each $\ell \in \PP_{k,m}$, we fix an isomorphism $\iota_{\ell}:\, H^1_{\textup{tr}}(\QQ_{\ell},T_{k,m}) \ra R_{k,m}.$ Fix a core vertex $n \in \NN_{k,m}$ and order the primes $\ell_{1}\,,\dots,\ell_{\nu(n)}$ dividing $n$ arbitrarily. Let $\textup{loc}_i^{\textup{f}}\,(\hbox{resp., } \textup{loc}_i^{\textup{tr}})$ be the map $H^1(\QQ,T_{k,m})\ra R_{k,m}$ defined as the localization at $\ell_{i}$, followed by the projection onto the finite (resp., the transverse) submodule, an then followed by $\iota_{\ell_i} \circ \phi^{\textup{fs}}_{\ell_i}$ $(\hbox{resp., } \iota _{\ell_i})$\,:
\begin{equation*}
\xymatrix{
H^1(\QQ, T_{k,m})\ar[r]\ar @{-->}[rrddd]_{\textup{loc}_i^{\textup{f}}}&H^1(\QQ_{\ell_{i}}, T_{k,m})\ar[r]^(.46){\textup{proj}}&H^1_{f} (\QQ_{\ell_{i}}, T_{k,m})\ar[d]_{\phi_{\ell_i}^{\textup{fs}}}^{\cong}\\
&&H^1_{\textup{s}}(\QQ_{\ell_i},T_{k,m})\ar[d]^{\cong}\\
&&H^1_{\textup{tr}}(\QQ_{\ell_i},T_{k,m})\ar[d]_{\iota_{\ell_i}}^{\cong} \\
&&R_{k,m}
}
\end{equation*}

and

\begin{equation*}
\xymatrix{
H^1(\QQ, T_{k,m})\ar[r]\ar @{-->}[rrdd]_{\textup{loc}_i^{\textup{tr}}}&H^1(\QQ_{\ell_{i}}, T_{k,m})\ar[r]^{\textup{proj}}&H^1_{\textup{s}} (\QQ_{\ell_{i}}, T_{k,m})\ar[d]^{\cong}\\
&&H^1_{\textup{tr}}(\QQ_{\ell_i},T_{k,m})\ar[d]^{\cong}_{\iota_{\ell_i}}\\
&&R_{k,m}
}
\end{equation*}
\begin{define}
For each $r \mid n$, define
$$\psi ^{(r)}_i= \left \{
\begin{array}{cl}
	 \textup{loc}^{\textup{f}}_i\hbox{ ,}& \hbox{if } \ell_i  \hbox{ divides } r, \\
	 \textup{loc}_i^{\textup{tr}}\hbox{ ,}& \hbox{if } \ell _i \hbox{ does not divide } r
\end{array}
\right. $$
and let
$$
\begin{array}{rlrl}
\psi^{(r)}=&\psi^{(r)}_{1}\oplus\dots\oplus\psi_{\nu(n)}^{(r)}&:&\,H^1_{\FFc^{n}}(\QQ,T_{k,m})\ra R_{k,m}^{\nu(n)}, \\
\Psi^{(r)}=&\psi^{(r)}_{1}\wedge\dots\wedge\psi^{(r)}_{\nu(n)}&:& \,\stackrel{\nu(n)+1}{\bigwedge} H^1_{\FFc^{n}}(\QQ,T_{k,m})\ra H^1_{\FFc^{n}}(\QQ,T_{k,m}).
\end{array}
$$
\end{define}

\begin{prop}
\label{prop3}
Suppose $n \in \NN_{k,m}$ is a core vertex, then for all $r \mid n$,
\[
\begin{array}{rcl}
\Psi^{(r)}\left(\stackrel{\nu(n)+1}{\bigwedge}H^1_{\FFc^{n}}(\QQ,T_{k,m})\right)& \subset&  H^1_{\FFc(r)}(\QQ,T_{k,m})\\

																																		&=& H^1_{\FFc(r)}(\QQ,T_{k,m}) \hbox{, if r is a core vertex.}
\end{array}
\]
\end{prop}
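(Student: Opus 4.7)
My plan is to deduce the proposition directly from Lemma~\ref{lemma1} by applying it to $M=H^1_{\FFc^{n}}(\QQ,T_{k,m})$ with the $R_{k,m}$-linear functionals $\psi^{(r)}_1,\dots,\psi^{(r)}_{\nu(n)}$. The key step will be to identify the kernel of $\psi^{(r)}$ with the Selmer group $H^1_{\FFc(r)}(\QQ,T_{k,m})$, after which the inclusion will be immediate and the equality will follow by a cardinality count.

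First I would check that on $H^1_{\FFc^{n}}(\QQ,T_{k,m})$ one has $\ker(\psi^{(r)})=H^1_{\FFc(r)}(\QQ,T_{k,m})$. For each $\ell_i\mid n$ the local condition imposed by $\FFc^{n}$ at $\ell_i$ is the full $H^1(\QQ_{\ell_i},T_{k,m})$, and the functional $\psi^{(r)}_i$ is, by construction, the composition of projection onto one of the two direct summands in the splitting $H^1(\QQ_{\ell_i},T_{k,m})=H^1_{\finite}(\QQ_{\ell_i},T_{k,m})\oplus H^1_{\textup{tr}}(\QQ_{\ell_i},T_{k,m})$ (Lemma~\ref{lem:trans}), followed by the isomorphism $\iota_{\ell_i}$. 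When $\ell_i\mid r$, $\psi^{(r)}_i=\textup{loc}_i^{\textup{f}}$ has kernel $H^1_{\textup{tr}}(\QQ_{\ell_i},T_{k,m})$, which is exactly the $\FFc(r)$-condition at $\ell_i$; when $\ell_i\nmid r$, $\psi^{(r)}_i=\textup{loc}_i^{\textup{tr}}$ has kernel $H^1_{\finite}(\QQ_{\ell_i},T_{k,m})$, which (since $\ell_i\notin\Sigma(\FFc)$) is again the $\FFc(r)$-condition at $\ell_i$. Intersecting these kernels gives precisely $H^1_{\FFc(r)}(\QQ,T_{k,m})$.

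Given this identification, Proposition~\ref{thm2} tells me that $H^1_{\FFc^{n}}(\QQ,T_{k,m})$ is free of rank $\nu(n)+1$ over $R_{k,m}$, so Lemma~\ref{lemma1} applies with $r=\nu(n)$ and yields
$$\Psi^{(r)}\left(\stackrel{\nu(n)+1}{\bigwedge} H^1_{\FFc^{n}}(\QQ,T_{k,m})\right)\subset \ker(\psi^{(r)})=H^1_{\FFc(r)}(\QQ,T_{k,m}),$$
which is the inclusion. For the equality, when $r$ is itself a core vertex Theorem~\ref{thm1} gives that $H^1_{\FFc(r)}(\QQ,T_{k,m})$ is free of rank one over $R_{k,m}$. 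The short exact sequence
$$0\lra H^1_{\FFc(r)}(\QQ,T_{k,m})\lra H^1_{\FFc^{n}}(\QQ,T_{k,m})\lra \textup{image}(\psi^{(r)})\lra 0,$$
combined with the finiteness of $R_{k,m}$, forces $|\textup{image}(\psi^{(r)})|=|R_{k,m}|^{\nu(n)}$, so the image exhausts $R_{k,m}^{\nu(n)}$ and $\psi^{(r)}$ is surjective. The equality clause of Lemma~\ref{lemma1} then yields the desired equality.

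The entire argument is a clean packaging of Lemma~\ref{lemma1} with the freeness results of Theorem~\ref{thm1} and Proposition~\ref{thm2}, and I expect no serious obstacle: the only mild bookkeeping is in Step~1, where one must correctly match the kernels of the $\psi^{(r)}_i$ with the $\FFc(r)$-local conditions using the transverse-finite splitting of Lemma~\ref{lem:trans} and the chosen isomorphisms $\iota_{\ell_i}$.
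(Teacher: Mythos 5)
Your proposal is correct and follows the same route as the paper: you make explicit the identification $\ker(\psi^{(r)})=H^1_{\FFc(r)}(\QQ,T_{k,m})$ that the paper calls ``immediate from the definitions,'' then apply Lemma~\ref{lemma1} for the containment and Proposition~\ref{thm2} together with Theorem~\ref{thm1} and a length/cardinality count to get surjectivity of $\psi^{(r)}$ and hence the equality when $r$ is a core vertex. No substantive difference from the paper's argument.
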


\begin{proof}
For $r\mid n$, the following sequence is exact:
\be\label{eqn:exactstar}0 \lra H^1_{\FFc(r)}(\QQ, T_{k,m}) \lra H^1_{\FFc ^n}(\QQ, T_{k,m}) \stackrel{\psi^{(r)}}{\lra} R_{k,m}^{\nu(n)}\lra \hbox{coker}(\psi^{(r)}) \lra 0.
\ee
If $r$ is a core vertex, then $H^1_{\FFc(r)}(\QQ, T_{k,m})$ is free of rank one by definition, and $H^1_{\FFc^n}(\QQ,T_{k,m})$ is free of rank $\nu(n)+1$ by Proposition~\ref{thm2}. Counting lengths in the exact sequence (\ref{eqn:exactstar}), one checks that $\textup{coker}(\psi^{(r)})=0$, i.e., $\psi^{(r)}$ is surjective. Now Lemma~\ref{lemma1} concludes that
$$\Psi^{(r)}\left(\stackrel{\nu(n)+1}{\bigwedge}H^1_{\FFc^{n}}(\QQ,T_{k,m})\right)=H^1_{\FFc(r)}(\QQ,T_{k,m})$$
 when $r$ is a core vertex. The general containment also follows similarly from Lemma~\ref{lemma1}.
\end{proof}
\begin{define}
Choose $c \in \stackrel{\nu(n)+1}{\bigwedge}H^1_{\FFc^{n}}(\QQ,T_{k,m})$ and for each divisor $r$ of $n$, define
$$\kappa_r:=\kappa_r(n,c)=(-1)^{\nu(r)}\Psi^{(r)}(c) \in H^1_{\FFc(r)}(\QQ,T_{k,m}).$$
\end{define}
\begin{prop}
\label{prop4}
For every $r\ell_i \mid n$, we have $\textup{loc} _{\ell_i}^{\textup{f}}(\kappa_r)=\textup{loc}_{\ell_i}^{\textup{tr}}(\kappa_{r\ell_i})$.
\end{prop}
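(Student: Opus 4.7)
The key observation is that $\psi_j^{(r)}$ and $\psi_j^{(r\ell_i)}$ agree for all $j\neq i$ (whether $\ell_j$ divides $r$ is the same as whether $\ell_j$ divides $r\ell_i$ when $j\neq i$), and at the $i$-th slot they swap roles: $\psi_i^{(r)}=\textup{loc}_i^{\textup{tr}}$ while $\psi_i^{(r\ell_i)}=\textup{loc}_i^{\textup{f}}$. So the proof is a sign-tracking computation in the exterior algebra, completely parallel to Appendix B of \cite{mr02}.

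First, I would unwind the iterated construction of $\Psi^{(r)}$ to identify the scalars on both sides with values of top-degree wedges of functionals. Concretely, for $\psi\in\textup{Hom}(H,R_{k,m})$ and any $\Phi:\bigwedge^{\nu(n)+1}H\to H$ obtained by iterating $\nu(n)$ functionals, one has $\psi(\Phi(c))=(\psi_1\wedge\cdots\wedge\psi_{\nu(n)}\wedge\psi)(c)$ viewed as a map $\bigwedge^{\nu(n)+1}H\to R_{k,m}$; applied in the two cases this yields
$$\textup{loc}_i^{\textup{f}}(\Psi^{(r)}(c))=\bigl(\psi_1^{(r)}\wedge\cdots\wedge\psi_{\nu(n)}^{(r)}\wedge\textup{loc}_i^{\textup{f}}\bigr)(c),$$
$$\textup{loc}_i^{\textup{tr}}(\Psi^{(r\ell_i)}(c))=\bigl(\psi_1^{(r\ell_i)}\wedge\cdots\wedge\psi_{\nu(n)}^{(r\ell_i)}\wedge\textup{loc}_i^{\textup{tr}}\bigr)(c).$$

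Second, I would replace $\psi_j^{(r\ell_i)}$ by $\psi_j^{(r)}$ for $j\neq i$ in the second expression (this is legal by the observation above), so both sides become wedges of exactly the same functionals $\psi_1^{(r)},\ldots,\psi_{i-1}^{(r)},\psi_{i+1}^{(r)},\ldots,\psi_{\nu(n)}^{(r)}$ together with the two functionals $\textup{loc}_i^{\textup{f}}$ and $\textup{loc}_i^{\textup{tr}}$; the two expressions differ only by the order in which $\textup{loc}_i^{\textup{f}}$ and $\textup{loc}_i^{\textup{tr}}$ appear (in the $r$-case they sit at positions $i$ and $\nu(n)+1$ respectively, in the $r\ell_i$-case the reverse). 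Since the wedge product of functionals is alternating, a single transposition introduces exactly one sign, giving
$$\textup{loc}_i^{\textup{f}}(\Psi^{(r)}(c))=-\,\textup{loc}_i^{\textup{tr}}(\Psi^{(r\ell_i)}(c)).$$

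Finally, combining this with the sign factors in $\kappa_r=(-1)^{\nu(r)}\Psi^{(r)}(c)$ and $\kappa_{r\ell_i}=(-1)^{\nu(r\ell_i)}\Psi^{(r\ell_i)}(c)=(-1)^{\nu(r)+1}\Psi^{(r\ell_i)}(c)$, the two sign discrepancies cancel and one obtains $\textup{loc}_i^{\textup{f}}(\kappa_r)=\textup{loc}_i^{\textup{tr}}(\kappa_{r\ell_i})$, as desired. The only mildly subtle point, and the one I would be most careful with, is the very first step: checking that iterated application of the functional-contraction construction really does produce the top-degree wedge (equivalently, the determinant of the matrix of values) rather than some signed variant. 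Once this is pinned down, the rest is pure bookkeeping; since the two sides differ by a single transposition, no arithmetic depending on $i$ or $\nu(n)$ survives.
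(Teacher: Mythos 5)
Your proposal is correct and follows essentially the same path as the paper's proof: rewrite $\textup{loc}_i^{\textup{f}}(\Psi^{(r)}(c))$ as $(\Psi^{(r)}\wedge\textup{loc}_i^{\textup{f}})(c)$, observe that $\psi^{(r)}_j=\psi^{(r\ell_i)}_j$ for $j\neq i$ while $\psi^{(r)}_i=\textup{loc}_i^{\textup{tr}}$ and $\psi^{(r\ell_i)}_i=\textup{loc}_i^{\textup{f}}$, so a single transposition converts $\Psi^{(r)}\wedge\textup{loc}_i^{\textup{f}}$ into $-\,\Psi^{(r\ell_i)}\wedge\textup{loc}_i^{\textup{tr}}$, and that $-1$ is absorbed by the change from $(-1)^{\nu(r)}$ to $(-1)^{\nu(r\ell_i)}$. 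The one caveat you flagged (compatibility of the iterated contraction construction with composition, i.e.\ that $\psi\circ\Psi^{(r)}=\Psi^{(r)}\wedge\psi$ with the paper's sign conventions) is indeed the only place needing a convention check; the paper takes it for granted, as is standard.
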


\begin{proof}
\begin{align*}
\textup{loc}_{\ell_i}^{\textup{f}}(\kappa_r)&=(-1)^{\nu(r)}\,\textup{loc}_{\ell_i}^{\textup{f}}(\Psi^{(r)}(c))\\
													&=(-1)^{\nu(r)}(\Psi^{(r)} \wedge \textup{loc}_{\ell_i}^{\textup{f}})(c)\\
													&=-(-1)^{\nu(r)}(\Psi^{(r\ell_{i})} \wedge\textup{loc}_{\ell_i}^{\textup{tr}})(c)\\
													&=(-1)^{^{\nu(r\ell_i)}}\,\textup{loc}_{\ell_i}^{\textup{tr}}(\Psi^{(r\ell_{i}}(c))\\
													&=\textup{loc}_{\ell_i}^{\textup{tr}}(\kappa_{r\ell_i}).
\end{align*}
where the we obtain third equality by transposing $\textup{loc}_{\ell_i}^{\textup{f}}$ and $\textup{loc}_{\ell_i}^{\hbox{\sss{tr}}}$ which occur in $\Psi^{(r)} \wedge \textup{loc}_{\ell_i}^{\textup{f}}$.
\end{proof}
Thus the collection $\{\kappa_r(n,c)\}$ for $r\mid n$ gives a section of the Selmer sheaf $\mathcal{H}$ to the subgraph $\mathcal{X}_n$ of $\mathcal{X}(\PP_{k,m})$, whose vertices are all the positive divisors $r$ of $n$. We will show that if $n^{\prime} = nd $ is another core vertex, then this section can be extended to a section of $\mathcal{H}$ over the graph $\mathcal{X}_{nd}$.

\begin{rem}
 Note that the section $\kappa(n,c)=\{\kappa_r(n,c): r\mid n\}$ depends on the choice of ordering of primes dividing $n$, but only up to a sign. We will adopt the convention in \cite[Appendix B]{mr02}; namely we will maintain the same ordering of primes dividing $n$ when we extend to $n^{\prime}=nd$, but place them after the \emph{new} primes which divide $d$.
\end{rem}

\begin{lemma}
\label{lemma5}In the following diagram, the image of the horizontal arrow contains the image of the vertical arrow:
$$\xymatrix @C=1in { \stackrel{\nu(n^{\prime})+1}{\bigwedge}H^1_{\FFc^{nd}}(\QQ,T_{k,m}) \ar[r]^{\textup{loc}_{1}^{\textup{tr}} \wedge \dots \wedge\textup{loc}_{\nu(d)}^{\textup{tr}}}&\stackrel{\nu(n)+1}{\bigwedge}H^1_{\FFc^{nd}}(\QQ,T_{k,m}) \\
& \stackrel{\nu(n)+1}{\bigwedge}H^1_{\FFc^{n}}(\QQ,T_{k,m})\ar[u] }
$$
If the image of $c^{\prime} \in \stackrel{\nu(n^{\prime})+1}{\bigwedge}H^1_{\FFc^{nd}}(\QQ,T_{k,m})$ under the horizontal map agrees with the image of $c$ under the vertical map, then the section $\kappa(n^{\prime}, c^{\prime})$ of $\mathcal{X}_{nd}$ extends the section $\kappa(n,c)$ of $\mathcal{X}_n$.
\end{lemma}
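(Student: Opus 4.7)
The plan is to construct the desired preimage $c'$ by explicitly splitting off the extra $\nu(d)$ dimensions of $H^1_{\FFc^{nd}}$ relative to $H^1_{\FFc^n}$, using the transverse/finite decomposition at the primes dividing $d$.

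First I would establish the short exact sequence
\begin{equation*}
0 \lra H^1_{\FFc^n}(\QQ, T_{k,m}) \lra H^1_{\FFc^{nd}}(\QQ, T_{k,m}) \lra \bigoplus_{\ell \mid d} H^1_{\textup{s}}(\QQ_\ell, T_{k,m}) \lra 0
\end{equation*}
via Poitou--Tate global duality. Surjectivity on the right is equivalent to $H^1_{(\FFc^n)^*}(\QQ, T^*_{k,m})=0$. Since at each prime $\ell \mid n$ the dual condition of the relaxed $\FFc^n$ is the strict condition $0$, whereas that of $\FFc(n)$ is the self-dual transverse condition, we have $H^1_{(\FFc^n)^*} \subseteq H^1_{\FFc(n)^*}$, and the latter vanishes because $n$ is a core vertex (Theorem~\ref{thm1}). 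The right-hand term is free of rank $\nu(d)$ by Lemma~\ref{lem:ref}, so the sequence splits.

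Next I would refine the splitting. The key local observation is that for each $\ell_i \mid d$ the functional $\textup{loc}_{\ell_i}^{\textup{tr}}$ annihilates $H^1_{\FFc^n}(\QQ, T_{k,m})$: elements of $H^1_{\FFc^n}$ satisfy the canonical (finite) condition at $\ell_i$, which is the direct complement of $H^1_{\textup{tr}}(\QQ_{\ell_i}, T_{k,m})$ inside $H^1(\QQ_{\ell_i}, T_{k,m})$ by Lemma~\ref{lem:trans}. Therefore I can lift a dual basis to find $y_1,\dots,y_{\nu(d)} \in H^1_{\FFc^{nd}}$ satisfying $\textup{loc}_{\ell_i}^{\textup{tr}}(y_j)=\delta_{ij}$, and the resulting decomposition $H^1_{\FFc^{nd}}(\QQ, T_{k,m}) = H^1_{\FFc^n}(\QQ, T_{k,m}) \oplus \langle y_1,\dots,y_{\nu(d)}\rangle$ is of free $R_{k,m}$-modules. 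For any $c \in \bigwedge^{\nu(n)+1} H^1_{\FFc^n}(\QQ, T_{k,m})$, I would then set
\begin{equation*}
c' := c \wedge y_1 \wedge \cdots \wedge y_{\nu(d)} \in \bigwedge^{\nu(n')+1} H^1_{\FFc^{nd}}(\QQ, T_{k,m}).
\end{equation*}
Expanding the iterated contraction via the formula preceding Lemma~\ref{lemma1}, every term that pairs some $\textup{loc}_{\ell_i}^{\textup{tr}}$ against a factor of $c$ vanishes by the annihilation property, and the only surviving term is the one that pairs the $\textup{loc}_{\ell_i}^{\textup{tr}}$'s diagonally against $y_1,\dots,y_{\nu(d)}$. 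That term collapses to $\pm c$, establishing the containment of images.

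For the extension claim, I would verify $\kappa_r(n',c')=\kappa_r(n,c)$ for every $r \mid n$ by factoring the contraction $\Psi^{(r)}_{n'}$ with respect to the prescribed ordering of primes (those dividing $d$ first). Since no prime of $d$ divides $r$, the first $\nu(d)$ factors of $\Psi^{(r)}_{n'}$ are $\textup{loc}_1^{\textup{tr}},\dots,\textup{loc}_{\nu(d)}^{\textup{tr}}$; applying them to $c'$ yields $\pm c$ by the previous paragraph, and applying the remaining factors (which are exactly $\Psi^{(r)}_n$) gives $\pm \Psi^{(r)}_n(c)$. The main obstacle will be the bookkeeping: one must track the permutation signs coming out of the wedge-contraction formula, reconcile them with the $(-1)^{\nu(r)}$ normalizations in the definitions of $\kappa_r(n,c)$ and $\kappa_r(n',c')$, and confirm that the authors' ordering convention was chosen precisely so that these signs cancel for every $r \mid n$. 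Once the signs are checked, the compatibility $\kappa_r(n',c')=\kappa_r(n,c)$ on all of $\XX_n \subset \XX_{nd}$ follows, completing the proof.
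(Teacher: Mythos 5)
Your proposal follows essentially the same route as the paper's proof. Both rely on the short exact sequence from Poitou--Tate duality together with the vanishing $H^1_{(\FFc^n)^*}(\QQ,T_{k,m}^*)=0$, both split off a free rank-$\nu(d)$ complement to $H^1_{\FFc^n}$ inside $H^1_{\FFc^{nd}}$ that maps isomorphically under $\oplus\,\textup{loc}_i^{\textup{tr}}$, and both deduce the containment of images from the resulting decomposition; your choice of a dual basis $y_1,\dots,y_{\nu(d)}$ is just the paper's choice of the summand $A$ made explicit, and the annihilation of $\textup{loc}_{\ell_i}^{\textup{tr}}$ on $H^1_{\FFc^n}$ (because these classes are finite at $\ell_i \mid d$) is the same observation the paper uses implicitly to make its diagram commute. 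For the extension statement the paper carries out the wedge bookkeeping $\kappa_r(n,c)=(\psi^{(r)}_{\nu(d)+1}\wedge\cdots\wedge\psi^{(r)}_{\nu(nd)})(\textup{loc}_1^{\textup{tr}}\wedge\cdots\wedge\textup{loc}_{\nu(d)}^{\textup{tr}})(c')=\Psi^{(r)}_{n'}(c')=\kappa_r(n',c')$ directly, using that $\psi_i^{(r)}=\textup{loc}_i^{\textup{tr}}$ for $i\le\nu(d)$ since $r\mid n$ is coprime to $d$; your outline gestures at this but defers the sign verification, which is the one place you have not actually completed the argument. Filling in that computation as in the paper would make your proof complete; no new ideas are needed.
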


\begin{proof}
As in the proof of Theorem~\ref{thm2}, global duality and the vanishing of $H^1_{(\FFc^{n})^*}(\QQ,T_{k,m}^*)$ imply that we have an exact sequence
$$0\lra H^1_{\FFc^{n}}(\QQ,T_{k,m}) \lra H^1_{\FFc^{nd}}(\QQ,T_{k,m}) \stackrel{\textup{loc}_d}{\lra}\bigoplus_{\ell \mid d} \frac{H^1(\QQ_{\ell},T_{k,m})}{H^1_{\textup{loc}}(\QQ_{\ell},T_{k,m})}\lra 0.$$

The right-most term is projective, hence we may choose a free rank-$\nu(d)$ summand $A$, complementary to $H^1_{\FFc^{n}}(\QQ,T_{k,m}) \subset H^1_{\FFc^{nd}}(\QQ,T_{k,m})$. Thus the map $\bigoplus \textup{loc}_i^{\textup{tr}}:\, A \ra R_{k,m} ^{\nu(d)}$ is an isomorphism. This shows that the map $\bigwedge \textup{loc}_i^{\textup{tr}} :\, \bigwedge^{\nu(d)}A \lra  \bigwedge^{\nu(d)}R_{k,m}^{\nu(d)} = R_{k,m}$ and the map
$$\xymatrix @C=.95in{ \bigwedge^{\nu(d)}A \otimes \bigwedge^{\nu(n)+1} H^1_{\FFc^{n}}(\QQ,T_{k,m}) \ar[r]^(.56){{\textup{loc}_{1}^{\textup{tr}} \wedge \dots \wedge \textup{loc}_{\nu(d)}^{\textup{tr}} \otimes\textup{id}}}& \bigwedge^{\nu(n)+1} H^1_{\FFc^{n}}(\QQ,T_{k,m}) }$$
 are isomorphisms as well. Thus, in the commutative diagram
\begin{equation*}
\xymatrix @C=.95in{\bigwedge^{\nu(n^{\prime})+1} H^1_{\FFc^{nd}}(\QQ,T_{k,m}) \ar[r]^(.50){\textup{loc}_{1}^{\textup{tr}} \wedge \dots \wedge \textup{loc}_{\nu(d)}^{\textup{tr}}}  & \bigwedge^{\nu(n)+1} H^1_{\FFc^{nd}}(\QQ,T_{k,m}) \\
\bigwedge^{\nu(d)}A \otimes \bigwedge^{\nu(n)+1} H^1_{\FFc^{n}}(\QQ,T_{k,m}) \ar[r]^(.56){\textup{loc}_{1}^{\textup{tr}} \wedge \dots \wedge \textup{loc}_{\nu(d)}^{\textup{tr}} \otimes \textup{id}}\ar[u]& \ar[u] \bigwedge^{\nu(n)+1} H^1_{\FFc^{n}}(\QQ,T_{k,m})}
\end{equation*}
the lower horizontal map is surjective, and hence the first claim is proved.
To prove the second claim, we observe for $r\mid n$ that
\begin{align*}
\kappa_r (n,c)&=(-1)^{\nu(r)}\left(\psi_{\nu(d)+1}^{(r)} \wedge \dots \wedge \psi_{\nu(nd)}^{(r)}\right)(c)\\
							&=(-1)^{\nu(r)}\left(\psi_{\nu(d)+1}^{(r)} \wedge \dots \wedge \psi_{\nu(nd)}^{(r)}\right)\left(\textup{loc}_{1}^{\textup{tr}} \wedge \dots \wedge \textup{loc}_{\nu(d)}^{\textup{tr}}\right)(c ^{\prime})\\
							&=(-1)^{\nu(r)}\left(\psi_{\nu(d)+1}^{(r)} \wedge \dots \wedge \psi_{\nu(nd)}^{(r)}\right)\left(\psi_{1}^{(r)} \wedge \dots \wedge \psi_{\nu(d)}^{(r)}\right)(c ^{\prime})\\
							&=(-1)^{\nu(r)}\left(\psi_{1}^{(r)} \wedge \dots \wedge \psi_{\nu(nd)}^{(r)}\right)(c ^{\prime})\\
							&=\kappa_r(n^{\prime},c^{\prime}).
\end{align*}
\end{proof}
Let $\Gamma(\mathcal{H})$ denote the \emph{global sections} of the Selmer sheaf $\mathcal{H}$ on the graph $\XX_{k,m}$.
\begin{thm}
\label{KS}
For any core vertex $r \in \NN_{k,m}$, the map $\Gamma(\mathcal{H})\ra \mathcal{H}(r)$ is surjective.
\end{thm}

\begin{proof}
Fixing a generator of $G_r := \bigotimes _{\ell \mid r} \mathbb{F}_{\ell}^{\times}$, we may identify $\mathcal{H}(r)$ with $H^1_{\FFc(r)}(\QQ,T_{k,m})$. Fix an arbitrary $\alpha \in H^1_{\FFc(r)}(\QQ,T_{k,m})$. Using Proposition~\ref{prop3}, one may choose $c_0 \in{\bigwedge}^{\nu(r)+1} H^1_{\FFc^{r}}(\QQ,T_{k,m})$ such that $\kappa_{r}(r,c_0)=\alpha$.

By \cite[Lemma 4.1.9(iii)]{mr02} and Theorem~\ref{thm1},  we may choose a sequence of core vertices $r=r_0 \mid r_1 \mid r_2 \dots $, such that, every $n \in \NN_{k,m}$ divides an $r_i$ for some $i$. By Lemma~\ref{lemma5}, one may choose for each $i>0$ and element  $c_i \in \bigwedge ^{\nu(r_i)+1} H^1_{\FFc^{r_i}}(\QQ,T_{k,m})$ in such a way that the section $\kappa(r_{i+1},c_{i+1})$ of $\mathcal{X}_{n_i+1}$ restricts to $\kappa(r_{i},c_{i})$ on $\mathcal{X}_{n_i}$. We now define $\kappa \in \Gamma(\mathcal{H})$ to be
 $$\kappa:=\{\kappa_n\}_{n \in \NN_{k,m}}= \{\kappa_n(r_i,c_i) \hbox{ for } i \hbox{ chosen sufficiently large}\}_{n \in \NN_{k,m}}.$$
By construction, $\kappa$ maps to $\alpha$ under the map $\Gamma(\mathcal{H})\ra \mathcal{H}(r)$.
\end{proof}
\subsubsection{The upper bound}
In this section, we will obtain an upper bound on the size of  the $R_{k,m}$-module $\textbf{KS}(T_{k,m},\FFc,\PP_{k,m})$.
\begin{thm}
\label{thm:inj}
For any core vertex $r \in \NN_{k,m}$ the map $\Gamma(\mathcal{H})\ra \mathcal{H}(r)$ is injective.
\end{thm}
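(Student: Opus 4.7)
The plan is to show that a global section $\kappa \in \Gamma(\mathcal{H})$ with $\kappa_r = 0$ at the core vertex $r$ must satisfy $\kappa_n = 0$ for every $n \in \NN_{k,m}$. The strategy mirrors the surjectivity argument in Theorem~\ref{KS}, but runs in the opposite direction: instead of \emph{building} a global section from local data along a chain of core vertices, I \emph{propagate} the vanishing through the graph $\XX_{k,m}$ using the sheaf conditions at edges.

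The basic tool is the sheaf relation at the edge $\{n, n\ell\}$, namely
$$
\phi_{\ell}^{\textup{fs}}\big(\textup{loc}_{\ell}^{\textup{f}}(\kappa_n)\big) \; = \; \textup{loc}_{\ell}^{\textup{s}}(\kappa_{n\ell})
$$
in $H^1_{\textup{s}}(\QQ_{\ell},T_{k,m}) \otimes G_{n\ell}$. Since $\phi_{\ell}^{\textup{fs}}$ is an isomorphism by Lemma~\ref{lem:ref} and the transverse local component of $\kappa_{n\ell}$ at $\ell$ projects isomorphically onto its singular image by Lemma~\ref{lem:trans}, the vanishing of $\kappa_n$ forces $\textup{loc}_{\ell}(\kappa_{n\ell}) = 0$ in full; symmetrically, the vanishing of $\kappa_{n\ell}$ forces $\textup{loc}_{\ell}(\kappa_n) = 0$. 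So the edge relation lets us transfer vanishing of \emph{local components at $\ell$} back and forth between the two endpoints.

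Using \cite{mr02}~Lemma~4.1.9(iii) and Theorem~\ref{thm1}, I would choose a chain of core vertices $r = r_0 \mid r_1 \mid r_2 \mid \cdots$ in $\NN_{k,m}$ whose successive quotients are single primes and whose divisors exhaust $\NN_{k,m}$ (the existence of such a chain is already invoked in the proof of Theorem~\ref{KS}). I then induct on $i$ to show $\kappa_{r_i} = 0$. Writing $r_{i+1} = r_i \ell_i$, the edge at $\ell_i$ forces $\textup{loc}_{\ell_i}(\kappa_{r_{i+1}}) = 0$, so $\kappa_{r_{i+1}}$ lies in the Selmer module $H^1_{\FFc^{\ell_i}(r_i)}(\QQ, T_{k,m})$ imposing the \emph{strict} condition at $\ell_i$ and the transverse conditions at primes of $r_i$. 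To upgrade this to $\kappa_{r_{i+1}} = 0$, I apply global duality (\cite{mr02}~Theorem~2.3.4) to the inclusion $\FFc^{\ell_i}(r_i) \subset \FFc(r_{i+1})$: combining the core vertex vanishing $H^1_{\FFc(r_{i+1})^*}(\QQ,T_{k,m}^*) = 0$ from Theorem~\ref{thm1} with the length formula in Lemma~\ref{lem:lower} and the rank-one freeness of $H^1_{\FFc(r_{i+1})}$ and $H^1_{\textup{tr}}(\QQ_{\ell_i}, T_{k,m})$, a length count in the Poitou--Tate sequence
$$
0 \;\lra\; H^1_{\FFc^{\ell_i}(r_i)} \;\lra\; H^1_{\FFc(r_{i+1})} \;\lra\; H^1_{\textup{tr}}(\QQ_{\ell_i},T_{k,m}) \;\lra\; H^1_{\FFc^{\ell_i}(r_i)^*}(\QQ,T_{k,m}^*) \;\lra\; 0
$$
forces the desired vanishing. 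For a general vertex $n \in \NN_{k,m}$, select $i$ with $n \mid r_i$ and descend along the chain of divisors $r_i \to r_i/\ell \to \cdots \to n$; at each step, the symmetric half of the edge relation transfers vanishing downward, and the cartesian property (Corollary~\ref{cor:cartes}) guarantees that intermediate Selmer groups behave coherently.

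The hard part will be executing the length count at each inductive step. Without both core vertex hypotheses ($r_i$ and $r_{i+1}$ being core), the Poitou--Tate sequence yields only the equality $\length_{\ZZ_p}[H^1_{\FFc^{\ell_i}(r_i)}] = \length_{\ZZ_p}[H^1_{\FFc^{\ell_i}(r_i)^*}(T_{k,m}^*)]$ rather than vanishing of either side; it is precisely the combination of the two dual-vanishing statements $H^1_{\FFc(r_i)^*} = H^1_{\FFc(r_{i+1})^*} = 0$ together with Lemma~\ref{lem:lower} and the rank-one structure of $H^1_{\textup{tr}}(\QQ_{\ell_i},T_{k,m})$ that makes the exact sequence collapse to the injectivity needed. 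The descent step requires the same analysis for non-core vertices, where cartesianness of $\FFc$ on $\TT_{k,m}$ plays the role of the core vertex property in certifying compatibility of the strict-vs-transverse local conditions across the quotients $T_{L,S} \twoheadrightarrow T_{l,s}$.
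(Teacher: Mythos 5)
Your strategy breaks at the inductive step, and the gap is not cosmetic. Having shown $\textup{loc}_{\ell_i}(\kappa_{r_{i+1}})=0$, you place $\kappa_{r_{i+1}}$ inside the Selmer module with strict condition at $\ell_i$ and transverse condition along $r_i$ (call it $H^1_{(\FFc)_{\ell_i}(r_i)}$; note the strict condition should go in the subscript, not the superscript, in the notation of Definition~\ref{modified selmer-1}), and you claim the Poitou--Tate length count forces this module to vanish. It does not. Comparing $(\FFc)_{\ell_i}(r_i)$ either with $\FFc(r_i)$ (quotient $H^1_{\textup{f}}(\QQ_{\ell_i},T_{k,m})$) or with $\FFc(r_{i+1})$ (quotient $H^1_{\textup{tr}}(\QQ_{\ell_i},T_{k,m})$), one gets only
$$\length\bigl(H^1_{(\FFc)_{\ell_i}(r_i)}(\QQ,T_{k,m})\bigr)\;=\;\length\bigl(H^1_{(\FFc)_{\ell_i}(r_i)^*}(\QQ,T_{k,m}^*)\bigr),$$
because $H^1_{\FFc(r_i)}$, $H^1_{\FFc(r_{i+1})}$, $H^1_{\textup{f}}(\QQ_{\ell_i},T_{k,m})$, $H^1_{\textup{tr}}(\QQ_{\ell_i},T_{k,m})$ all have length $\length(R_{k,m})$ and the dual groups at the core vertices vanish, so everything cancels. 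The common length can perfectly well be $\length(R_{k,m})$ itself --- namely when the localization $H^1_{\FFc(r_i)}(\QQ,T_{k,m})\to H^1_{\textup{f}}(\QQ_{\ell_i},T_{k,m})$ is the zero map --- and nothing in the choice of chain via \cite{mr02}~Lemma~4.1.9(iii) prevents this. The edge relation then yields no information about $\kappa_{r_{i+1}}$ at all. The downward descent to non-core vertices has exactly the same defect.

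The missing structural input is control over which edges of $\XX_{k,m}$ carry a nonzero localization map. The paper isolates the subgraph $\XX^0_{k,m}$ whose vertices are core vertices and whose edges $\{n,n\ell\}$ are only those for which $H^1_{\FFc(n)}(\QQ,\overline{T})\to H^1_{\textup{f}}(\QQ_{\ell},\overline{T})$ is nonzero; Proposition~\ref{connected} proves $\XX^0_{k,m}$ is connected and Proposition~\ref{locally free} (via Remark~\ref{comparison} and Nakayama) shows the vertex-to-edge maps along $\XX^0_{k,m}$ are isomorphisms, so vanishing propagates among core vertices (Proposition~\ref{vanish1}). A chain that merely enumerates core vertices, as in the surjectivity argument, need not consist of edges of $\XX^0_{k,m}$, so your propagation can stall. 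Passing from core to non-core vertices then requires the separate induction of Proposition~\ref{vanish2} on $\lambda(r,\overline{T}^*)$, where the key move is a Chebotarev choice of prime $\ell$ (Lemma~\ref{361}) adapted to the hypothetical nonzero class $\kappa_r$ --- both $(\kappa_r)_\ell\neq 0$ and the required nontriviality of residual localizations --- together with Lemma~\ref{417} to decrease $\lambda$. Your proposal contains none of these three ingredients (the distinguished subgraph, the locally-free/Nakayama step, the Chebotarev-driven induction), and without them the argument does not close.
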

As an immediate corollary to Theorem~\ref{KS} and Theorem~\ref{thm:inj} we obtain:
\begin{cor}
\label{KolSys}
For any core vertex $r \in \NN_{k,m}$, the map $$\mathbf{KS}(T_{k,m}, \FFc,\PP_{k,m}) \lra H^1_{\FFc(r)}(\QQ,T_{k,m})$$ is an isomorphism. In particular, $\mathbf{KS}(T_{k,m},\FFc,\PP_{k,m})$ is a free $R_{k,m}$-module of rank one.
\end{cor}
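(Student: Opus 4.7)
My approach is to deduce injectivity from the observation that the value of any global section $\kappa$ of $\mathcal{H}$ at a single core vertex $r$ determines $\kappa$ at every vertex of $\XX_{k,m}$. I will proceed in two stages: first propagate vanishing from $r$ to all core vertices, then extend to arbitrary $n \in \NN_{k,m}$.

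For the first stage, I claim that whenever $n$ and $n\ell$ are both core vertices, the edge maps $\psi_n^e$ and $\psi_{n\ell}^e$ from Definition~\ref{selmer sheaf} are \emph{both} isomorphisms of free rank-one $R_{k,m}$-modules. By Theorem~\ref{thm1}, $H^1_{\FFc(n)}(\QQ,T_{k,m})$ and $H^1_{\FFc(n\ell)}(\QQ,T_{k,m})$ are free of rank one, while Lemma~\ref{lem:ref} gives the same for $H^1_s(\QQ_\ell,T_{k,m})$; so it suffices to check surjectivity. Poitou--Tate global duality applied to the Selmer structure $\FFc(n\ell)$ gives an exact sequence
$$H^1_{\FFc(n\ell)}(\QQ,T_{k,m}) \lra H^1_{\textup{tr}}(\QQ_\ell,T_{k,m}) \lra H^1_{\FFc(n\ell)^*}(\QQ,T^*_{k,m})^{\vee},$$
where the right-hand term vanishes since $n\ell$ is a core vertex. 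Composing the resulting surjection with the canonical isomorphism $H^1_{\textup{tr}} \cong H^1_s$ from Lemma~\ref{lem:trans} shows that $\psi_{n\ell}^e$ is surjective. The same argument, applied to $n$ and composed with the finite-singular comparison $\phi_\ell^{\textup{fs}}$, shows that $\psi_n^e$ is surjective. The Kolyvagin system compatibility $\psi_n^e(\kappa_n) = \psi_{n\ell}^e(\kappa_{n\ell})$ then forces $\kappa_n = 0 \iff \kappa_{n\ell} = 0$. Combined with the connectivity of the subgraph of core vertices (Lemma 4.1.9(iii) of~\cite{mr02}), vanishing at $r$ propagates to every core vertex.

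For the second stage, given an arbitrary $n \in \NN_{k,m}$ I pick a core vertex $n'$ with $n \mid n'$ (guaranteed by Lemma 4.1.9 of~\cite{mr02}). By the first stage, $\kappa_{n'} = 0$. The strategy is to realize the restriction of $\kappa$ to the finite subgraph $\XX_{n'}$ of divisors of $n'$ through the wedge-product construction of Section~\ref{sec:lowerbound}: find $c' \in \bigwedge^{\nu(n')+1} H^1_{\FFc^{n'}}(\QQ,T_{k,m})$ with $\kappa_m = (-1)^{\nu(m)} \Psi^{(m)}(c')$ for all $m \mid n'$. Since $n'$ is a core vertex, $\Psi^{(n')}$ is surjective onto $H^1_{\FFc(n')}(\QQ,T_{k,m})$ by Proposition~\ref{prop3}, and $\Psi^{(n')}(c') = \pm\kappa_{n'} = 0$ lets me modify $c'$ so that $c' = 0$. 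Then $\kappa_m = \Psi^{(m)}(0) = 0$ for every $m \mid n'$, and in particular $\kappa_n = 0$.

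The main obstacle is justifying that every global section $\kappa|_{\XX_{n'}}$ arises from the wedge-product construction---the proof of Theorem~\ref{KS} only produces some such lift with prescribed value at $n'$, not all of them. To close this gap I would argue by downward induction on the number of prime divisors: if $\kappa_{m\ell} = 0$ and $m\ell$ lies in $\XX_{n'}$, then the Kolyvagin system compatibility together with the isomorphism $\phi_\ell^{\textup{fs}}$ forces the localization $\textup{loc}^{\textup{f}}_\ell(\kappa_m)$ to vanish, and an iterated application of this across all primes $\ell_i$ dividing $n'/m$ places $\kappa_m$ in the kernel of the simultaneous localization map, whose vanishing (by a Poitou--Tate/length comparison using the cartesian property established in Corollary~\ref{cor:cartes}) will then yield $\kappa_m = 0$. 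Making this descent precise is the technical heart of the argument, and essentially reproduces Howard's proof in Appendix~B of~\cite{mr02} in the $\Lambda$-adic setting.
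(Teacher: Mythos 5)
Your overall strategy—reduce the corollary to injectivity of $\Gamma(\mathcal{H})\to\mathcal{H}(r)$, propagate vanishing first across core vertices and then to arbitrary $n$—is the same as the paper's (Propositions~\ref{connected}, \ref{locally free}, \ref{vanish1} and \ref{vanish2}, combined with Theorem~\ref{KS} for surjectivity). But there are two concrete gaps in the execution.

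The more serious one is in Stage~1. The Poitou--Tate sequence you write is misquoted. Comparing $\FFc(n\ell)_\ell$ (strict at $\ell$) with $\FFc(n\ell)$, the correct sequence reads
$$0 \to H^1_{\FFc(n\ell)_\ell}(\QQ,T_{k,m}) \to H^1_{\FFc(n\ell)}(\QQ,T_{k,m}) \to H^1_{\textup{tr}}(\QQ_\ell,T_{k,m}) \to H^1_{\FFc(n\ell)^{*\ell}}(\QQ,T^*_{k,m})^{\vee} \to H^1_{\FFc(n\ell)^*}(\QQ,T^*_{k,m})^{\vee} \to 0,$$
so the cokernel of $\psi^e_{n\ell}$ is the Pontryagin dual of $H^1_{\FFc(n\ell)^{*\ell}}(\QQ,T^*_{k,m})$ (the \emph{relaxed-at-$\ell$} dual Selmer group), \emph{not} of $H^1_{\FFc(n\ell)^*}(\QQ,T^*_{k,m})$. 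The vanishing of the latter (from the core vertex hypothesis) does not force the former to vanish. A length computation shows that surjectivity of $\psi^e_{n\ell}$ (equivalently of $\psi^e_n$) is \emph{exactly equivalent} to the non-vanishing of the residual localization map $H^1_{\FFc(n)}(\QQ,\overline{T})\to H^1_{\finite}(\QQ_\ell,\overline{T})$, which is precisely the paper's defining condition for an edge of $\XX^0_{k,m}$. You cannot conclude that every pair of adjacent core vertices is joined by an edge with isomorphic edge maps; the paper instead restricts to the subgraph $\XX^0_{k,m}$ of edges satisfying this extra localization condition and invokes MR Theorem~4.3.12 for its connectivity. Relatedly, your citation of MR Lemma~4.1.9(iii) for connectivity is a misattribution: that lemma produces divisibility chains of core vertices (and is used in the proof of Theorem~\ref{KS}), while connectivity of the core-vertex graph is MR Theorem~4.3.12 (cited by the paper in Proposition~\ref{connected}).

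In Stage~2 you acknowledge a gap and sketch a ``downward induction on the number of prime divisors''. This does not match the paper's argument and does not obviously close. Your iterated-localization step only places $\kappa_m$ in the strict Selmer group $H^1_{\FFc(m)_{n'/m}}(\QQ,T_{k,m})$, and there is no reason for that group to vanish for an arbitrary divisor $m$ of $n'$. The paper's Proposition~\ref{vanish2} instead inducts on the length $\lambda(m,\overline{T}^*)$ of the residual dual Selmer group, uses a Chebotarev-type density statement (Lemma~\ref{361}) to choose a \emph{new} prime $\ell\in\PP_{k,m}$ at which $(\kappa_m)_\ell\neq 0$ while simultaneously decreasing $\lambda(m\ell,\overline{T}^*)$ via Lemma~\ref{417}, and then derives a contradiction from the Kolyvagin-system compatibility; this is a genuinely different inductive variable and a nontrivial additional ingredient that your sketch omits.
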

The rest of this section is devoted to the proof of Theorem~\ref{thm:inj}. We will be closely following the techniques from \cite[\S4.3]{mr02}, many times repeating the same arguments.

Recall that  $\overline{T}=T/\mm T \cong T_{k,m}/(\mm,\xx)T_{k,m}$ and $\XX_{k,m}$ is the graph defined as in~\S\ref{sec:lowerbound} equipped with the Selmer sheaf $\mathcal{H}$. We define a subgraph $\XX^{0}_{k,m}=\XX^{0}_{k,m}(T_{k,m},\FFc,\PP_{k,m})$ of $\XX_{k,m}$ whose vertices are the core vertices of $\XX_{k,m}$ and whose edges are defined as follows: We join $n$ and $n\ell$ by an edge in $\XX^{0}_{k,m}$ if and only if the localization map $H^1_{\FFc(r)}(\QQ,\overline{T}) \ra H^1_{f}(\QQ_{\ell},\overline{T})$ is \emph{non-zero}.

We now define the sheaf $\mathcal{H}^0$ on $\XX^{0}_{k,m}$ as simply the restriction of the Selmer sheaf $\mathcal{H}$ to $\XX^{0}_{k,m}$.
\begin{define}
\label{locally free of rank one}
A simplicial sheaf $\mathcal{H}$ on a graph $\mathcal{X}$ is \emph{locally free of rank one} if for every vertex $v$ and edge $e=\{v,v^{\prime}\}$ the modules $\mathcal{H}(v)$ and $\mathcal{H}(e)$ are free of rank one and the vertex edge homomorphisms $\psi ^{e}_{v}$ are isomorphisms.
\end{define}

We will prove below that the graph $\XX^{0}_{k,m}$ is connected and $\mathcal{H}^0$ is locally free of rank one. This will be the essential step to prove Theorem~\ref{thm:inj}.

\begin{prop}
\label{connected}
The graph $\XX^{0}_{k,m}$ defined above is connected.
\end{prop}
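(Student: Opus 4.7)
The plan is to adapt the proof of Proposition~4.3.4 in \cite{mr02} to the present setting, using the cartesian descent framework (Corollary~\ref{cor:cartes} and Remark~\ref{application-cart}) together with Theorem~\ref{thm1} to reduce everything to the residual representation $\overline{T}=T/pT$. By Theorem~\ref{thm1}, every core vertex for $(\FFc,\overline{T},\PP_{k,m})$ is automatically a core vertex for $(\FFc,T_{k,m},\PP_{k,m})$; combined with Remark~\ref{comparison}, the vertex set of $\XX^{0}_{k,m}$ is pinned down at the residual level, and since the edges of $\XX^{0}_{k,m}$ are also defined through $\overline{T}$-cohomology, the connectedness of $\XX^{0}_{k,m}$ is equivalent to the analogous residual statement.

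Fix two core vertices $n_0,n_1\in\NN_{k,m}$. The backbone of the argument is a \emph{neighbor-extension lemma}: for any core vertex $n$ and any non-zero class $c\in H^1_{\FFc(n)}(\QQ,\overline{T})$, I will produce infinitely many primes $\ell\in\PP_{k,m}$ coprime to $n$ such that $n\ell$ is again a core vertex and $\textup{loc}_\ell(c)\neq 0$ in $H^1_{\finite}(\QQ_\ell,\overline{T})$, so that $\{n,n\ell\}$ is an edge of $\XX^{0}_{k,m}$. These primes are manufactured by Chebotarev density applied to the compositum of $\QQ(T/p^{k+m}T,\mu_{p^{k+m+1}})$ with the cyclic extension cut out by a cocycle representative of $c$; linear disjointness follows from the vanishing \hthree{}, exactly as in \cite{mr02} Lemma~3.5.2. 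Preservation of the core-vertex property under the passage $n\mapsto n\ell$ is then controlled by Poitou--Tate global duality: since $\textup{loc}_\ell(c)\neq 0$ and the finite and transverse submodules split $H^1(\QQ_\ell,\overline{T})$ (Lemma~\ref{lem:trans}), switching the local condition at $\ell$ from unramified to transverse kills $c$ while the finite-singular comparison (Lemma~\ref{lem:ref}) forces the dual Selmer group to remain zero, keeping $H^1_{\FFc(n\ell)}(\QQ,\overline{T})$ of dimension one.

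With the neighbor-extension lemma in hand, connecting $n_0$ to $n_1$ proceeds by iteration: pick generators $c_i$ of the rank-one modules $H^1_{\FFc(n_i)}(\QQ,\overline{T})$ and repeatedly apply the lemma to extend each $n_i$ by new primes, building chains of core vertices whose consecutive pairs are edges; further Chebotarev freedom is used to steer the two chains into a common core vertex $n^{\prime}$, producing a path from $n_0$ to $n_1$ in $\XX^{0}_{k,m}$. The main obstacle will be the simultaneous Chebotarev condition on $\ell$: it must lie in $\PP_{k,m}$ (a condition on $\textup{Fr}_\ell$ over $\QQ(T/p^{k+m}T,\mu_{p^{k+m+1}})$), it must detect the chosen cocycle $c$ non-trivially (a condition over a strictly larger field), and the resulting $n\ell$ must remain a core vertex. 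These three conditions cut out subsets of $\Gal(\overline{\QQ}/\QQ)$ defined over progressively larger fields, and verifying their mutual compatibility is precisely where the hypotheses \hone{}--\hfour{}, and especially \hthree{} (which guarantees disjointness via the vanishing of $H^1(\QQ(\overline{T},\mu_{p^\infty}),\overline{T})$), intervene.
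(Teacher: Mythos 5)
The paper's own proof is a one-line reduction plus a citation: the vertex set of $\XX^0_{k,m}$ coincides with the set of core vertices of $(\overline{T},\FFc,\PP_{k,m})$ (Theorem~\ref{thm1} gives one inclusion, and Remark~\ref{comparison} together with Lemma~3.5.3 of \cite{mr02} gives the other), the edge condition is defined directly in terms of $\overline{T}$, so the graph is exactly the residual one of Mazur--Rubin and connectedness is Theorem~4.3.12 of \cite{mr02}. You carry out the same reduction to $\overline{T}$ correctly, if tersely, but then you choose to re-derive the residual connectedness statement instead of citing it, and it is in the re-derivation that a real gap appears.

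Your \emph{neighbor-extension lemma} is sound and matches the Chebotarev/global-duality tool that \cite{mr02} in fact uses. The problem is the final step: ``further Chebotarev freedom is used to steer the two chains into a common core vertex $n'$'' is not an argument, it is a restatement of the goal. The neighbor-extension lemma lets you grow a tree of core vertices rooted at $n_0$ and another rooted at $n_1$, but nothing you have said forces these trees to meet: each extension prime is chosen to make \emph{one} localization nonzero, and choosing a single $\ell$ with $\textup{loc}_\ell(c_0)\neq 0$ and $\textup{loc}_\ell(c_1)\neq 0$ simultaneously only connects $n_0$ to $n_0\ell$ and $n_1$ to $n_1\ell$, two still-distinct vertices. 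The actual proof of Theorem~4.3.12 in \cite{mr02} closes this gap by a genuine descent argument (an induction controlled by the length of an auxiliary Selmer module attached to the pair $n_0,n_1$), not by more Chebotarev choices. If you want a self-contained proof you would need to reproduce that induction; otherwise, the clean route is the one the paper takes: observe the graph only depends on $\overline{T}$ and invoke \cite{mr02} Theorem~4.3.12 directly.
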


\begin{proof}
Since the edges of $\XX^{0}_{k,m}$ are defined in terms of $\overline{T}$ (and not $T_{k,m}$ itself),  \cite[Theorem 4.3.12]{mr02} applies.\end{proof}

\begin{prop}
\label{locally free}
The sheaf $\mathcal{H}^0$ is locally free of rank one.
\end{prop}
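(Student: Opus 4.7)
The plan is to verify the two defining conditions of Definition~\ref{locally free of rank one} separately: that every vertex and edge module is free of rank one over $R_{k,m}$, and that every vertex-to-edge map is an isomorphism. Freeness at a vertex $n$ of $\XX^{0}_{k,m}$ is exactly the content of Theorem~\ref{thm1} (the core-vertex condition) together with Remark~\ref{rem:l-cong-to-1}, which ensures $G_n$ becomes free of rank one over $R_{k,m}$ after tensoring; freeness at an edge $\{n,n\ell\}$ is Lemma~\ref{lem:ref}.

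For the vertex-to-edge maps $\psi^{e}_n$ and $\psi^{e}_{n\ell}$, both source and target are free of rank one over the local artinian ring $R_{k,m}=\LL/(p^k,\xx^m)$ with maximal ideal $\mathcal{M}=(p,\xx)$, so Nakayama's lemma reduces each claim to the nonvanishing of the map modulo $\mathcal{M}$. Using the cartesian property of $\FFc(n)$ on $\TT_{k,m}$ (Corollary~\ref{cor:cartes}), Remark~\ref{comparison}, and the naturality of the finite-singular comparison map, the reduction of $\psi^{e}_n$ modulo $\mathcal{M}$ is identified with the composition
\[
H^{1}_{\FFc(n)}(\QQ,\overline{T})\lra H^{1}_{\finite}(\QQ_\ell,\overline{T})\stackrel{\phi^{\textup{fs}}_\ell}{\lra}H^{1}_{\textup{s}}(\QQ_\ell,\overline{T}).
\]
The first arrow is nonzero by the very definition of the edges of $\XX^{0}_{k,m}$, the second is an isomorphism by Lemma~\ref{lem:ref}, so the composition is an isomorphism of one-dimensional $\mathbb{F}_p$-vector spaces, and Nakayama upgrades this to $\psi^{e}_n$ being an isomorphism over $R_{k,m}$.

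The same reduction shows $\psi^{e}_{n\ell}$ is an isomorphism provided the singular localization $H^{1}_{\FFc(n\ell)}(\QQ,\overline{T})\ra H^{1}_{\textup{s}}(\QQ_\ell,\overline{T})$ is nonzero, which is the main step. I would invoke global Poitou--Tate duality together with the vanishing $H^{1}_{\FFc(n)^{*}}(\QQ,\overline{T}^{*})=0$ afforded by $n$ being a core vertex to obtain an exact sequence
\[
0\lra H^{1}_{\FFc(n)}(\QQ,\overline{T})\lra H^{1}_{\FFc^{\ell}(n)}(\QQ,\overline{T})\lra H^{1}_{\textup{s}}(\QQ_\ell,\overline{T})\lra 0,
\]
so that $V:=H^{1}_{\FFc^{\ell}(n)}(\QQ,\overline{T})$ is two-dimensional over $\mathbb{F}_p$. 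The already-established injection $H^{1}_{\FFc(n)}(\QQ,\overline{T})\hookrightarrow H^{1}_{\finite}(\QQ_\ell,\overline{T})$ implies that $H^{1}_{\FFc_{\ell}(n)}(\QQ,\overline{T})=0$, so the total localization $V\ra H^{1}(\QQ_\ell,\overline{T})=H^{1}_{\finite}(\QQ_\ell,\overline{T})\oplus H^{1}_{\textup{tr}}(\QQ_\ell,\overline{T})$ is injective, and hence an isomorphism by dimension count. Since $H^{1}_{\FFc(n\ell)}(\QQ,\overline{T})\subset V$ is cut out by the transverse condition at $\ell$, its image under this identification is precisely the $H^{1}_{\textup{tr}}$-summand; composing with the isomorphism $H^{1}_{\textup{tr}}\cong H^{1}_{\textup{s}}$ of Lemma~\ref{lem:trans} yields the required nonvanishing.

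The main obstacle is the transport of the nonvanishing hypothesis from $n$ to $n\ell$, handled by the global duality argument in the last paragraph; the remaining identifications of Nakayama reductions with residual-cohomology maps are bookkeeping consequences of the cartesian property already proved, and introduce no further difficulty.
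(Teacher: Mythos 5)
Your proof is correct and follows the paper's overall strategy (establish freeness of $\mathcal{H}^0(n)$ and $\mathcal{H}^0(e)$, then show each vertex-to-edge map is an isomorphism via Nakayama's lemma by checking nonvanishing on the residual layer $\overline{T}$). For the map $\psi^e_n$ your argument matches the paper's word for word. The difference is in the treatment of $\psi^e_{n\ell}$: the paper compresses this to ``a similar argument (and using Lemma~\ref{lem:trans} this time),'' whereas you spell out the global Poitou--Tate duality argument that actually makes it work. That expansion is genuine content: the edge relation in $\XX^0_{k,m}$ is defined by nonvanishing of the finite localization from the \emph{smaller} vertex $n$, so the nonvanishing of the singular localization from $n\ell$ is not definitionally available and must be deduced. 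Your chain---using $H^1_{\FFc(n)^*}(\QQ,\overline{T}^*)=0$ to get the exact sequence $0\to H^1_{\FFc(n)}(\QQ,\overline{T})\to H^1_{\FFc^{\ell}(n)}(\QQ,\overline{T})\to H^1_{\textup{s}}(\QQ_\ell,\overline{T})\to 0$, observing that the known injectivity $H^1_{\FFc(n)}(\QQ,\overline{T})\hookrightarrow H^1_{\finite}(\QQ_\ell,\overline{T})$ forces $H^1_{\FFc_\ell(n)}(\QQ,\overline{T})=0$, then using the splitting $H^1(\QQ_\ell,\overline{T})=H^1_{\finite}\oplus H^1_{\textup{tr}}$ to identify the transverse summand as the image of $H^1_{\FFc(n\ell)}(\QQ,\overline{T})$---is exactly the right way to complete what the paper leaves implicit.

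One small point of hygiene: when invoking $H^1_{\FFc(n)^*}(\QQ,\overline{T}^*)=0$, note that the vertices of $\XX^0_{k,m}$ are core for $T_{k,m}$, so one needs the residual reduction for the dual Selmer group (Lemma 3.5.3 of \cite{mr02}, already used in Lemma~\ref{417}) to pass from $H^1_{\FFc(n)^*}(\QQ,T_{k,m}^*)=0$ to $H^1_{\FFc(n)^*}(\QQ,\overline{T}^*)=0$; it would be worth flagging this to keep the step airtight. Apart from that, the proof is complete and, if anything, more self-contained than the one in the paper.
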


\begin{proof}
If $n$ is a vertex of $\XX^{0}_{k,m}$, then $n$ is a core vertex, thus $\mathcal{H}^0(n)$ is free of rank one by the definition of a core vertex. Further, $\mathcal{H}^0(n):=H^1_{\textup{s}}(\QQ_{\ell},T_{k,m})\otimes G_{n\ell}$ is free of rank one as well by Lemma~\ref{lem:ref}, since $\ell \in \PP_{k,m}$.

Suppose $e$ is the edge $\{n,n\ell\}$ in $\XX^{0}_{k,m}$. By (\ref{comparison}), we have $H^1_{\FFc(n)}(\QQ, T_{k,m})[\mathcal{M}] \cong H^1_{\FFc(n)}(\QQ, \overline{T})$, and by  Proposition~\ref{prop:cart}, $H^1_{f}(\QQ_{\ell}, T_{k,m})[\mathcal{M}] \cong H^1_{f}(\QQ_{\ell}, \overline{T}).$

Further, by the choice of $n$ and by Lemma~\ref{lem:ref}, both $H^1_{\FFc(n)}(\QQ,T_{k,m})$ and $H^1_{f}(\QQ_{\ell}, T_{k,m})$ are free of rank one over $R_{k,m}$, so the non-triviality of the map $H^1_{\FFc(r)}(\QQ,\overline{T}) \ra H^1_{f}(\QQ_{\ell},\overline{T})$ implies, by Nakayama's lemma, that the map
$$H^1_{\FFc(n)}(\QQ, T_{k,m}) \lra H^1_{f}(\QQ_{\ell}, T_{k,m})$$
 is an isomorphism. By Lemma~\ref{lem:ref}, the map
 $$\mathcal{H}^0(n) \lra \mathcal{H}^0(e):=H^1_{\textup{s}}(\QQ_{\ell},T_{k,m})\otimes G_{n\ell}$$
  is an isomorphism. A similar argument (and making use of Lemma~\ref{lem:trans} this time instead of Lemma~\ref{lem:ref}) shows that the map $\mathcal{H}^0(n\ell) \ra \mathcal{H}^0(e)$ is an isomorphism as well.
\end{proof}

\begin{prop}
\label{vanish1}	
Suppose that $\pmb{\kappa} \in \mathbf{KS}(T_{k,m},\FFc, \PP_{k,m})$ and $\kappa_r=0$ for some core vertex $r$, then $\kappa_n=0$ for all core vertices $n$.
\end{prop}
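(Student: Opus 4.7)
The plan is to exploit the two structural results that have just been proved about the restriction of the Selmer sheaf to the core subgraph. Namely, by Proposition~\ref{locally free} the sheaf $\mathcal{H}^0$ on $\XX^0_{k,m}$ is locally free of rank one (every $\mathcal{H}^0(v)$, every $\mathcal{H}^0(e)$ is free of rank one over $R_{k,m}$, and every vertex-to-edge homomorphism $\psi^e_v$ is an isomorphism), and by Proposition~\ref{connected} the graph $\XX^0_{k,m}$ is connected. These two facts together will force a global section of $\mathcal{H}^0$ to be determined, up to scaling, by its value at any single vertex; in particular, vanishing at one core vertex will propagate to vanishing at every core vertex.

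Concretely, first I would restrict $\kappa \in \Gamma(\mathcal{H})$ to the subgraph $\XX^0_{k,m}$ to obtain a global section $\kappa^0 \in \Gamma(\mathcal{H}^0)$ (this is automatic since the vertex/edge data of $\mathcal{H}^0$ is the same as that of $\mathcal{H}$ on the subgraph, and the compatibility relations are inherited). Next, fix any edge $e = \{n, n\ell\}$ of $\XX^0_{k,m}$. The section condition reads
$$\psi^{e}_{n}(\kappa_n) \;=\; \psi^{e}_{n\ell}(\kappa_{n\ell}) \quad \text{in } \mathcal{H}^0(e).$$
Since both $\psi^{e}_{n}$ and $\psi^{e}_{n\ell}$ are isomorphisms of rank one free $R_{k,m}$-modules by Proposition~\ref{locally free}, we deduce at once the equivalence
$$\kappa_n = 0 \iff \psi^{e}_{n}(\kappa_n)=0 \iff \psi^{e}_{n\ell}(\kappa_{n\ell})=0 \iff \kappa_{n\ell}=0.$$

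Finally, given the hypothesis that $\kappa_r = 0$ for some core vertex $r$, let $n$ be any other core vertex. By the connectedness of $\XX^0_{k,m}$ (Proposition~\ref{connected}) there is a path $r = r_0, r_1, \ldots, r_t = n$ of core vertices in $\XX^0_{k,m}$ with consecutive $r_i, r_{i+1}$ joined by an edge. Applying the equivalence above inductively along this path yields $\kappa_{r_i} = 0$ for every $i$, hence in particular $\kappa_n = 0$, which is exactly the claim.

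There is really no serious obstacle here, because the hard work has already been done: Proposition~\ref{connected} (via Theorem 4.3.12 of~\cite{mr02}) and Proposition~\ref{locally free} (which used Remark~\ref{comparison}, Proposition~\ref{prop:cart}, Nakayama's lemma, and Lemmas~\ref{lem:ref} and~\ref{lem:trans}) carry the full weight. The only point to double-check is the trivial one that a global section of $\mathcal{H}$ indeed restricts to a global section of $\mathcal{H}^0$ on the subgraph, which is immediate from the definitions.
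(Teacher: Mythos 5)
Your argument is correct and is essentially the paper's own: the paper also takes a path of core vertices supplied by Proposition~\ref{connected}, uses Proposition~\ref{locally free} to know the vertex-to-edge maps are isomorphisms, and propagates vanishing along the path (the paper phrases this by composing the isomorphisms into a single map $\Psi_P$ with $\Psi_P(\kappa_r)=\kappa_n$, while you do it edge-by-edge, but the content is identical). No gap, no genuinely different route.
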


\begin{proof}
For any core vertex $n$, one can find a path $P=\{r=v_0, v_1, \dots, v_k=n\}$ by Proposition~\ref{connected}. Let $e_i$ denote the edge $\{v_{i},v_{i+1}\}$. Then by Proposition~\ref{locally free}, the map
$$\Psi_P:=(\psi_{v_{k}}^{e_{k-1}})^{-1}\circ \psi_{v_{k-1}}^{e_{k-1}} \circ \dots \circ (\psi_{v_{1}}^{e_{0}})^{-1}\circ \psi_{v_{0}}^{e_{0}}: \,\mathcal{H}(r) \lra \mathcal{H}(n)$$
 is well-defined and is an isomorphism. By the definition of a Kolyvagin system, $\Psi_P(\kappa_r)=\kappa_n$\,. This proves the theorem.
\end{proof}
For a class $c \in H^1(\QQ,M)$ and any prime $\ell$, define $c_{\ell}:=\textup{loc}_\ell(c) \in H^1(\QQ_\ell,T)$ and write $c_{\ell,s}:=\textup{loc}_{\ell}^s(c) \in H^1_s(\QQ_\ell,M)$ for the projection of the class $c_\ell$ to the singular quotient $H^1_s(\QQ_\ell,M)$.

Lemma~\ref{361} and Lemma~\ref{417} below are extensions of \cite[Proposition 3.6.1]{mr02} and \cite[Lemma 4.1.7(iv)]{mr02}, respectively.  
\begin{lemma}
\label{361}
Suppose $c_1\,,c_2 \in H^1(\QQ,T_{k,m})$ and $c_3\,,c_4 \in H^1(\QQ,T_{k,m}^*)$ are all non-zero. For every $u,v\in \ZZ^+$, there is a subset $S\subset\PP_{u,v}$ of positive density such that, for $\ell \in S$, the localizations $(c_i)_{\ell}$ are all non-zero.
\end{lemma}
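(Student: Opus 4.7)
\smallskip

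The plan is a standard Chebotarev argument in the style of \cite{mr02}~Proposition~3.6.1, adapted to $T_{k,m}$. Set $F_0 = \QQ(T/p^{u+v}T\,,\mu_{p^{u+v+1}})$, the field cutting out the Galois action used to define $\PP_{u,v}$; the element $\tau$ of hypothesis \textbf{H.2} sits in $\textup{Gal}(F_0/\QQ)$. Enlarge to $F = F_0 \cdot \QQ(T_{k,m},\mu_{p^N})$ for $N \gg 0$ sufficiently large (say $N \geq k+m+u+v+1$); then $F$ is a finite Galois extension of $\QQ$, the module $T_{k,m}$ is trivial as a $G_F$-module, and $\tau$ still lifts (via hypothesis \textbf{H.2}) to an element of $\textup{Gal}(F/\QQ)$ with the appropriate action on $T/p^{u+v}T$ and on $\mu_{p^{u+v+1}}$.

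Next I would pass the classes to $F$. Using hypothesis \textbf{H.3} together with the standard inflation-restriction / Lemma~2.1.4 of \cite{mr02} argument (already invoked in the proof of Lemma~\ref{lem:applycart}), the restriction maps
\[
H^1(\QQ,T_{k,m}) \lra H^1(F,T_{k,m}) = \textup{Hom}(G_F,T_{k,m}), \qquad H^1(\QQ,T_{k,m}^*) \lra H^1(F,T_{k,m}^*) = \textup{Hom}(G_F,T_{k,m}^*)
\]
are injective, so $\textup{res}_F(c_i) \neq 0$ for $i=1,2,3,4$. Let $L/F$ be the finite Galois extension of $\QQ$ cut out by $\bigcap_i \ker(\textup{res}_F(c_i))$, so that each $\textup{res}_F(c_i)$ factors through a nonzero $\textup{Gal}(F/\QQ)$-equivariant homomorphism
\[
\overline{c}_i:\; \textup{Gal}(L/F) \lra T_{k,m} \text{ (resp.\ } T_{k,m}^*\text{)}.
\]

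Now I would produce a single Galois element $\gamma \in \textup{Gal}(L/\QQ)$ lifting $\tau|_{F_0}$ (so that any prime with Frobenius conjugate to $\gamma$ automatically lies in $\PP_{u,v}$) and such that $\overline{c}_i(\gamma_F) \neq 0$ for each $i$, where $\gamma_F := \gamma^{[F:F_0]}$ or a suitable multiple landing in $\textup{Gal}(L/F)$. The key input is that $T/pT$ (hence its Cartier dual $T^*[p]$) is absolutely irreducible as an $\mathbb{F}_p[G_\QQ]$-module by \textbf{H.1}, so any nonzero $\textup{Gal}(F/\QQ)$-equivariant map out of $\textup{Gal}(L/F)$ has image containing a Galois-stable $\mathbb{F}_p$-line, and no single coset of $\tau$ in $\textup{Gal}(L/\QQ)$ can lie in the kernel of all four maps simultaneously. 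A short combinatorial argument (translating $\gamma$ by elements of $\textup{Gal}(L/F)$ to adjust the $\overline{c}_i$-values independently) then produces the required $\gamma$. Chebotarev's theorem applied to $\textup{Gal}(L/\QQ)$ and the conjugacy class of $\gamma$ yields a set $S$ of primes of positive density with $\textup{Fr}_\ell$ conjugate to $\gamma$; these lie in $\PP_{u,v}$ by construction, and since the localization $(c_i)_\ell$ is computed by evaluating $\overline{c}_i$ on $\textup{Fr}_\ell$, all four localizations are nonzero.

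The main obstacle is the simultaneous non-vanishing step: I must prevent the four maps $\overline{c}_i$ from conspiring so that every lift of $\tau|_{F_0}$ kills some $c_i$. This is exactly where hypothesis \textbf{H.1} (absolute irreducibility, ruling out proper $G_\QQ$-stable subgroups of $T/pT$ or $T^*[p]$) and hypothesis \textbf{H.4} (controlling $\textup{Hom}_{G_\QQ}(T/pT, T^*[p])$, so that the $T_{k,m}$-classes and $T_{k,m}^*$-classes live in ``independent'' factors) do the work, as in \cite{mr02}~\S3.6. Once the independence is established, any remaining translation can be absorbed by an element of $\textup{Gal}(L/F)$.
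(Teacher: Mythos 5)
Your proposal reconstructs the Chebotarev argument of \cite{mr02} Proposition 3.6.1, which is exactly what the paper cites: the paper's entire proof of this lemma is the single sentence that the argument is identical to that proposition. The approach and the way you deploy \textbf{H.1}--\textbf{H.4} are correct; the one place that needs tightening is the final nonvanishing step, because $T_{k,m}$ is not irreducible. The criterion for $(c_i)_\ell\neq 0$ when $\Fr_\ell$ is conjugate to $\gamma$ is that the cocycle value $c_i(\gamma)$ lies outside $(\tau-1)T_{k,m}$ (resp.\ $(\tau-1)T_{k,m}^*$), whereas testing $\overline{c}_i(\gamma_F)\neq 0$ for a power $\gamma_F\in\textup{Gal}(L/F)$ amounts to composing with the norm $1+\tau+\cdots+\tau^{n-1}$, a strictly stronger condition; and the appeal to \textbf{H.1} should be routed through the socle: every nonzero $\overline{c}_i(\textup{Gal}(L/F))$ is an $R_{k,m}[G_{\QQ}]$-submodule of $T_{k,m}$ and hence contains the socle $T_{k,m}[\mathcal{M}]\cong T/pT$ by absolute irreducibility, and that socle maps nontrivially to $T_{k,m}/(\tau-1)T_{k,m}$ by \textbf{H.2}, which is what guarantees the translation trick has enough room to work before one applies \textbf{H.4}.
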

Mazur and Rubin state~Proposition 3.6.1 of loc.cit. only in the case when the coefficient ring $R$ is principal artinian. One can check that the arguments of~\cite[Page 31]{mr02} go through \emph{verbatim} when the coefficient ring is assumed only to be an artinian ring (not necessarily principal artinian) and give a proof of Lemma~\ref{361}.

Following~\cite{mr02}, define 
$$\lambda(n,M):=\length_{\oo} \,H^1_{\FF}(\QQ,M)$$ 
for  $M=T_{k,m}, T_{k,m}^*, \overline{T}, \overline{T}^*$;  and $\FF=\FFc(n) \hbox{ or } \FFc(n)^*$. 
\begin{lemma}
\label{417}
Suppose $n\ell \in \NN_{k,m}$ and assume that the maps induced from the natural localization maps
$$\xymatrix@R=.1in{
H^1_{\FFc(n)}(\QQ,T_{k,m})[\mathcal{M}] \ar[r]&H^1_{f}(\QQ_{\ell},T_{k,m})  \\
	H^1_{\FFc(n)^*}(\QQ,T_{k,m}^*)[\mathcal{M}] \ar[r]&H^1_{f}(\QQ_{\ell},T_{k,m}^*)
	}$$
	are both non-zero. Then
	\begin{enumerate}
	\item[\textbf{(i)}] $\lambda(n\ell, \overline{T})=\lambda(n, \overline{T}) -1 $.
	\item[\textbf{(ii)}] $\lambda(n\ell, \overline{T}^*)=\lambda(n, \overline{T}^*)-1$.
\end{enumerate}
\end{lemma}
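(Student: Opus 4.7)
The strategy is to reduce the lemma to its residual-representation analogue, Mazur--Rubin \cite{mr02} Lemma 4.1.7(iv), which is already proven there. The conclusions concern the residual modules $\overline{T}$ and $\overline{T}^*$, so all that is needed is a translation of the hypotheses (which are stated in terms of $T_{k,m}$) into statements about $\overline{T}$ and $\overline{T}^*$.

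First I would handle the $T$-side. By Remark~\ref{comparison}, there is a canonical identification $H^1_{\FFc(n)}(\QQ,T_{k,m})[\mathcal{M}] \cong H^1_{\FFc(n)}(\QQ,\overline{T})$; the cartesian property at $\ell$ proved in Proposition~\ref{prop:cart} (applied with $k'=k,\ m'=m$ restricted to $k=m=1$) gives the companion identification $H^1_{\finite}(\QQ_\ell,T_{k,m})[\mathcal{M}] \cong H^1_{\finite}(\QQ_\ell,\overline{T})$. The natural localization map commutes with the functor $(-)[\mathcal{M}]$, and its image on $H^1_{\FFc(n)}(\QQ,T_{k,m})[\mathcal{M}]$ lies automatically in $H^1_{\finite}(\QQ_\ell,T_{k,m})[\mathcal{M}]$. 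Under the two identifications above, the hypothesized non-vanishing of $H^1_{\FFc(n)}(\QQ,T_{k,m})[\mathcal{M}] \to H^1_{\finite}(\QQ_\ell,T_{k,m})$ therefore translates directly into non-vanishing of $H^1_{\FFc(n)}(\QQ,\overline{T}) \to H^1_{\finite}(\QQ_\ell,\overline{T})$.

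Next I would establish the dual analogue. The key observation is the Galois-equivariant identification $T_{k,m}^*[\mathcal{M}] \cong \overline{T}^*$, which follows from $T_{k,m}^* = \textup{Hom}(T_{k,m},\mu_{p^k})$: a map killed by both $p$ and $\xx$ factors through $T_{k,m}/\mathcal{M}T_{k,m} = \overline{T}$ and lands in $\mu_p$. Using this together with Pontryagin duality applied to the cartesian diagrams in Lemma~\ref{lem:applycart} and Remark~\ref{rem:cartx}, one obtains the dual cartesian-type identifications $H^1_{\FFc(n)^*}(\QQ,T_{k,m}^*)[\mathcal{M}] \cong H^1_{\FFc(n)^*}(\QQ,\overline{T}^*)$ and $H^1_{\finite}(\QQ_\ell,T_{k,m}^*)[\mathcal{M}] \cong H^1_{\finite}(\QQ_\ell,\overline{T}^*)$ (the latter using the cartesian property of the finite condition, which is self-dual at $\ell \neq p$ under local Tate duality because of the isotropy of the unramified local condition). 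Thus the second hypothesis translates into non-vanishing of $H^1_{\FFc(n)^*}(\QQ,\overline{T}^*) \to H^1_{\finite}(\QQ_\ell,\overline{T}^*)$.

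With both hypotheses translated to the residual setting, conclusions \textbf{(i)} and \textbf{(ii)} now follow by invoking \cite{mr02} Lemma 4.1.7(iv) applied to the Selmer triple $(\overline{T},\FFc,\PP_{k,m})$ (noting that $\ell \in \PP_{k,m} \subset \PP_{1,1} = \PP$ remains a valid Kolyvagin prime for the residual representation). The main obstacle is the second step: setting up the dual of Remark~\ref{comparison} rigorously. The excerpt never explicitly records the cartesian properties for the dual Selmer structure $\FFc^*$ on $T_{k,m}^*$, so one must either derive them from the primal properties of Section~\ref{subsec:cartesian} via local Tate duality, or repeat the arguments of Lemmas~\ref{compare-ffc-finite}--\ref{compare-finite-unr-2} and Lemma~\ref{h2-vanishing} in the dual setting (using \htam\ and \hsez\ in their dual incarnations).
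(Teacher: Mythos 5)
Your strategy — reduce the hypotheses to the residual representation $\overline{T}$, $\overline{T}^*$ and then quote the known residual-case lemma from \cite{mr02} — is exactly the paper's strategy, and the translation of the primal hypothesis is handled the same way (via Remark~\ref{comparison} and the cartesian property). The one genuine difference is how the dual-side identification $H^1_{\FFc(n)^*}(\QQ,T_{k,m}^*)[\mathcal{M}] \cong H^1_{\FFc(n)^*}(\QQ,\overline{T}^*)$ is secured. You correctly flag this as the main obstacle and propose either dualizing the cartesian properties via local Tate duality or repeating the arguments of \S\ref{subsec:cartesian} on the dual side, which is more work than necessary: the paper obtains this isomorphism in one line by citing \cite{mr02} Lemma 3.5.3, which already provides precisely this comparison under hypothesis \hthree\ (with no need to develop a dual cartesian formalism). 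Once you know to invoke that lemma, your plan closes immediately. A second, smaller difference: you invoke \cite{mr02} Lemma 4.1.7(iv) directly, whereas the paper first reduces to proving only part (ii) (since $\lambda(r,\overline{T})-\lambda(r,\overline{T}^*)$ is independent of $r$), then uses \cite{mr02} Lemma 4.1.7(ii) to produce the containment $H^1_{\FFc(n\ell)^*}(\QQ,\overline{T}^*)\subset H^1_{\FFc(n)^*}(\QQ,\overline{T}^*)$ and finishes via the resulting short exact sequence and the one-dimensionality of $H^1_{\finite}(\QQ_\ell,\overline{T}^*)$. Both finishes are valid; yours is more direct if 4.1.7(iv) is quoted as stated, while the paper's spells out the mechanism.
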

We will include the proof of Lemma~\ref{417} despite following \cite{mr02} quite closely.

\begin{proof}
Since the difference $\lambda(r\,, \overline{T})-\lambda(r\,, \overline{T}^*)$ is independent of $r$ (by~\cite[Corollary 2.3.6]{mr02}), it suffices to check only (ii). There is a commutative diagram
$$\xymatrix{H^1_{\FFc(n)^*}(\QQ,T_{k,m}^*)[\mathcal{M}] \ar[r]^(.54){\textup{loc}_{\ell}} &H^1_{f}(\QQ_{\ell},T_{k,m}^*) [\mathcal{M}] \\
H^1_{\FFc(n)^*}(\QQ,\overline{T}^*) \ar[r]^(.54){\textup{loc}_{\ell}} \ar[u]^{\cong}&H^1_{f}(\QQ_{\ell},\overline{T}^*)\ar[u]^{\cong}
}
$$
where the left vertical isomorphism is~\cite[Lemma 3.5.3]{mr02}. By the assumption of the Lemma, the upper horizontal map is non-zero, therefore the lower horizontal map is non-zero as well. However, $H^1_{f}(\QQ_{\ell},\overline{T}^*)$ is a one-dimensional $\mathbb{F}$-vector space, hence the lower horizontal map is surjective. Similarly, one may prove that the localization map $H^1_{\FFc(n)}(\QQ,\overline{T}) \ra H^1_{f}(\QQ_{\ell},\overline{T})$ is surjective as well. By \cite[Lemma 4.1.7(ii)]{mr02} (which still holds since we are working with the same residual representation $\overline{T}=T_{k,m}/\mathcal{M}\cdot T_{k,m}$), we conclude that
$$H^1_{\FFc(n\ell)^*}(\QQ,\overline{T}^*) =H^1_{\FFc^{\ell}(n)^*}(\QQ,\overline{T}^*) \subset H^1_{\FFc(n)^*}(\QQ,\overline{T}^*).$$
 Thus, we have a short exact sequence
 $$0\lra H^1_{\FFc(n\ell)^*}(\QQ,\overline{T}^*) \lra H^1_{\FFc(n)^*}(\QQ,\overline{T}^*) \lra H^1_{f}(\QQ_{\ell},\overline{T}^*) \lra 0.$$ This, together with the fact that $H^1_{f}(\QQ_{\ell},\overline{T}^*)$ is a one dimensional $\mathbb{F}$-vector space (as $\ell \in \PP_{k,m}$) completes the proof of (ii).
\end{proof}

\begin{prop}
\label{vanish2}
Suppose that $\pmb{\kappa} \in \mathbf{KS}(T_{k,m},\FFc,\PP_{k,m})$ and $\kappa_n=0$ for any core vertex $n$, then $\pmb{\kappa}=0$.
\end{prop}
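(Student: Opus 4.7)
I would argue by contradiction, using induction on the invariant $\lambda(n,\overline{T}^*):=\length_{\ZZ_p}H^1_{\FFc(n)^*}(\QQ,\overline{T}^*)$. Assume $\kappa\in\mathbf{KS}(T_{k,m},\PP_{k,m})$ vanishes on every core vertex but $\kappa_{n_0}\neq 0$ for some $n_0\in\NN_{k,m}$, and choose such an $n_0$ with $\lambda(n_0,\overline{T}^*)$ minimal. The standing assumption $\XX(\overline{T},\FFc)=1$, together with the Euler-characteristic-type identity underlying Lemma~\ref{lem:lower} (cf.~Theorem 4.1.10 of~\cite{mr02}), yields $\lambda(n,\overline{T})-\lambda(n,\overline{T}^*)=1$ for every $n\in\NN_{k,m}$; any non-core vertex therefore satisfies $\lambda(n,\overline{T}^*)\geq 1$ and $\lambda(n,\overline{T})\geq 2$. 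Since $\kappa$ vanishes on every core vertex, these inequalities apply to $n_0$, in particular ensuring that the relevant residual Selmer groups of $\overline{T}$ and $\overline{T}^*$ are nonzero.

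The inductive step will consist in producing a prime $\ell\in\PP_{k,m}$, coprime to $n_0$, satisfying simultaneously (a)~$\lambda(n_0\ell,\overline{T}^*)<\lambda(n_0,\overline{T}^*)$ and (b)~$\kappa_{n_0\ell}\neq 0$; this contradicts the minimality of $n_0$ (or, when $\lambda(n_0,\overline{T}^*)=1$, directly contradicts the core-vertex hypothesis). First I would pick nonzero residual classes $\overline{c}\in H^1_{\FFc(n_0)}(\QQ,\overline{T})$ and $\overline{c}^{\,*}\in H^1_{\FFc(n_0)^*}(\QQ,\overline{T}^*)$, and via Remark~\ref{comparison} and the dual identification used in the proof of Lemma~\ref{417} (Lemma 3.5.3 of~\cite{mr02}) lift them to nonzero classes $c\in H^1_{\FFc(n_0)}(\QQ,T_{k,m})[\mathcal{M}]$ and $c^*\in H^1_{\FFc(n_0)^*}(\QQ,T_{k,m}^*)[\mathcal{M}]$. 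Applying Lemma~\ref{361} to the triple $(\kappa_{n_0},c,c^*)$ and discarding the finitely many primes dividing $n_0$ then produces $\ell\in\PP_{k,m}$ with $\ell\nmid n_0$ whose localizations of all three classes are nonzero. Since $\ell\notin\Sigma(\FFc)$ and $\ell\nmid n_0$, the Selmer structure $\FFc(n_0)$ at $\ell$ is the finite condition, so the localizations of $c$ and $c^*$ lie nonzero in $H^1_{\finite}(\QQ_\ell,T_{k,m})$ and $H^1_{\finite}(\QQ_\ell,T_{k,m}^*)$ respectively; this verifies the hypothesis of Lemma~\ref{417} and gives~(a). For~(b), the Kolyvagin sheaf relation at the edge $e=\{n_0,n_0\ell\}$ reads $\phi^{\textup{fs}}_\ell(\textup{loc}_\ell\kappa_{n_0})=\textup{proj}_{\textup{s}}(\textup{loc}_\ell\kappa_{n_0\ell})$ in $H^1_{\textup{s}}(\QQ_\ell,T_{k,m})\otimes G_{n_0\ell}$. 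The left side is nonzero, since $\textup{loc}_\ell\kappa_{n_0}\in H^1_{\finite}(\QQ_\ell,T_{k,m})$ is nonzero and $\phi^{\textup{fs}}_\ell$ is an isomorphism by Lemma~\ref{lem:ref}. But $\textup{loc}_\ell\kappa_{n_0\ell}\in H^1_{\textup{tr}}(\QQ_\ell,T_{k,m})$ projects isomorphically onto $H^1_{\textup{s}}$ by Lemma~\ref{lem:trans}, forcing $\kappa_{n_0\ell}\neq 0$ and completing the contradiction.

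The main obstacle I anticipate is the coupling between the residual and mixed-depth coefficients: the hypothesis of Lemma~\ref{417} is phrased in terms of the $\mathcal{M}$-torsion groups $H^1_{\FFc(n)}(\QQ,T_{k,m})[\mathcal{M}]$ (and its dual), whereas Lemma~\ref{361} supplies primes that detect nonvanishing of arbitrary classes in $H^1(\QQ,T_{k,m})$ (and its dual) with no Selmer constraint imposed. Remark~\ref{comparison}, together with its dual analogue --- both of which rest on the cartesian structure developed in Section~\ref{sec:Core} --- is the translation mechanism that equates these two viewpoints and allows residual witnesses (needed for Lemma~\ref{417}) to be produced as lifts to $T_{k,m}$-level classes (needed for Lemma~\ref{361}). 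Once that bridge is in place, the minimality argument closes cleanly.
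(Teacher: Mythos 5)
Your proposal is correct and is essentially the paper's own proof, reorganized as a minimal-counterexample argument rather than a direct induction: both proceed by induction on $\lambda(\cdot,\overline{T}^*)$, invoke Lemma~\ref{361} to produce a prime $\ell$ detecting $\kappa_{n_0}$ and the two residual Selmer witnesses, use Lemma~\ref{417} to drop $\lambda$ at $n_0\ell$, and derive the contradiction from the finite--singular Kolyvagin relation at the edge $\{n_0,n_0\ell\}$. The only difference is bookkeeping, and your explicit remark that one must discard primes dividing $n_0$ (which is possible since Lemma~\ref{361} gives a positive-density set) is a point the paper leaves implicit.
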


\begin{proof}
We need to show that $\kappa_r=0$ for every $r \in \NN_{k,m}$. We prove this by induction on $\lambda(r, \overline{T}^*)$ as in \cite{mr02}. By the assumption of Proposition, $\kappa_r=0$ for all $r$ with $\lambda(r, \overline{T}^*)=0$.

Suppose now that $\lambda(r, \overline{T}^*)>0$ and suppose $\kappa_r \neq 0$. Using\footnote{We make use of Lemma~\ref{361} as follows: By our assumption that $\lambda(r, \overline{T}^*)>0$ and using the isomorphism (\ref{comparison}) above along with~\cite[Lemma 3.5.3]{mr02}, it follows that
$$H^1_{\FFc(r)}(\QQ,T_{k,m})[\mathcal{M}]\neq0\neq H^1_{\FFc(r)^*}(\QQ,T_{k,m}^*)[\mathcal{M}].$$
Hence, there are non-zero classes
$$c\in H^1_{\FFc(r)}(\QQ,T_{k,m})[\mathcal{M}] \subset H^1(\QQ,T_{k,m}) \hbox{ and } c^* \in H^1_{\FFc(r)^*}(\QQ,T_{k,m}^*)[\mathcal{M}] \subset H^1(\QQ,T_{k,m}^*).$$
 Now given also that $0\neq\kappa_r\in H^1(\QQ,T_{k,m})$, we invoke Lemma~\ref{361} with non-zero $\kappa_r \hbox{ and } c \in H^1(\QQ,T_{k,m})$ and $c^* \in H^1(\QQ,T_{k,m}^*)$, in order  to find a prime $\ell$ with the desired properties enlisted above.} Lemma~\ref{361}, we may choose a prime $\ell \in \PP_{k,m}$ such that
\begin{enumerate}
	\item[(a)] $(\kappa_r)_{\ell} \neq 0$,
	\item[(b)] The maps
	$$\xymatrix@R=.08in{H^1_{\FFc(r)}(\QQ,T_{k,m})[\mathcal{M}] \ar[r]&H^1_{f}(\QQ_{\ell},T_{k,m}) \\
	H^1_{\FFc(r)^*}(\QQ,T_{k,m}^*)[\mathcal{M}] \ar[r]&H^1_{f}(\QQ_{\ell},T_{k,m}^*)
	}$$
	are both non-zero.
\end{enumerate}
Now Lemma~\ref{417} and (b) above imply that $\lambda(r\ell, \overline{T}^*) < \lambda(r, \overline{T}^*)$, therefore $\kappa_{r\ell}=0$  by the induction hypothesis. However, by the definition of a Kolyvagin system and by (a), it follows that $(\kappa_{r\ell})_{\ell,\textup{s}} \neq 0$. This contradicts with the induction hypothesis that $\kappa_{r\ell}=0$, which shows that $\kappa_r=0$ for all $r \in \NN_{k,m}$.\end{proof}

\begin{proof}[Proof of Theorem~\ref{thm:inj}]
Suppose that $\pmb{\kappa} \in \Gamma(\mathcal{H})$ and that $\kappa_r=0$ for the core vertex $r$ given in the statement of Theorem~\ref{thm:inj}. It follows from Proposition~\ref{vanish1} that $\kappa_n=0$ for any core vertex $n$, therefore $\pmb{\kappa}=0$ by Proposition~\ref{vanish2}.
\end{proof}

\subsection{Kolyvagin Systems for  $T \otimes \Lambda$}
\label{KS(Lambda)}
Using the results from \S\ref{KS1}, we prove (Theorem~\ref{main} below)
$$\overline{\mathbf{KS}}(T\otimes\Lambda,\FFc,\PP)\newnot{symbol:KS}:=\varprojlim_{k,m} \left(\varinjlim_{j} \mathbf{KS}(T_{k,m},\FFc, \PP_{j})\right)$$
 is a free $\Lambda$-module of rank one, under the hypotheses which we set in \S\ref{subsec:hypo}.

\begin{lemma}
\label{lem:j}
For any $j \geq k+m$, the natural restriction map
$$\mathbf{KS}(T_{k,m},\FFc, \PP_{k+m}) \lra \mathbf{KS}(T_{k,m},\FFc, \PP_{j})$$
 is an isomorphism.
\end{lemma}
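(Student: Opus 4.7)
The plan is to reduce the claim to the structure theorem for $\mathbf{KS}(T_{k,m},\PP)$ established in Corollary~\ref{KolSys}, by observing that both source and target of the restriction map are free of rank one over $R_{k,m}$ and that the map is compatible with evaluation at a suitably chosen core vertex.

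First I would note that, since $\PP_j \subset \PP_{k+m} = \PP_{k,m}$ whenever $j \geq k+m$, there is an obvious restriction map at the level of Kolyvagin systems: a section of the Selmer sheaf on the graph $\XX(\PP_{k+m})$ restricts to a section on the subgraph $\XX(\PP_j)$. Next I would verify that the entire machinery of \S\ref{KS1} applies with $\PP_{k,m}$ replaced throughout by $\PP_j$. All the key inputs only use: (a) that every $\ell \in \PP_j$ satisfies $\mathrm{Fr}_\ell \equiv 1$ on $T_{k,m}$ and on $\mu_{p^{k+m+1}}$ (which holds \emph{a fortiori} for $\PP_j \subset \PP_{k+m}$, so Lemmas~\ref{lem:1}, \ref{lemma:fs-explicit}, \ref{lem:ref} and \ref{lem:trans} remain valid), and (b) existence of core vertices and the Chebotarev-type density statement of Lemma~\ref{361}, both of which use only that $\PP_j$ still contains a positive density set of primes conjugate to $\tau$. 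Therefore Corollary~\ref{KolSys} applies with $\PP_j$ in place of $\PP_{k+m}$, giving a core vertex $r\in\NN_j$ (also a core vertex in $\NN_{k+m}$ since $\NN_j \subset \NN_{k+m}$) and isomorphisms
\begin{equation*}
\mathrm{ev}_r^{(k+m)} : \mathbf{KS}(T_{k,m},\PP_{k+m}) \stackrel{\sim}{\lra} H^1_{\FFc(r)}(\QQ,T_{k,m}), \qquad \mathrm{ev}_r^{(j)} : \mathbf{KS}(T_{k,m},\PP_{j}) \stackrel{\sim}{\lra} H^1_{\FFc(r)}(\QQ,T_{k,m}).
\end{equation*}

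Finally, since the restriction map $\rho : \mathbf{KS}(T_{k,m},\PP_{k+m}) \to \mathbf{KS}(T_{k,m},\PP_j)$ is defined by forgetting the coordinates at vertices $n \notin \NN_j$, and $r \in \NN_j$, we have $\mathrm{ev}_r^{(j)} \circ \rho = \mathrm{ev}_r^{(k+m)}$ tautologically. Both evaluation maps are isomorphisms of $R_{k,m}$-modules, so $\rho$ must be an isomorphism as well.

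The only subtle point -- and what I would expect to be the main thing to double-check -- is that the arguments of \S\ref{KS1} (particularly the existence of core vertices for $(\FFc,\overline{T},\PP_j)$ via \cite{mr02}~Corollary~4.1.9, and the connectedness of the subgraph $\XX^0_{k,m}$ in Proposition~\ref{connected}) genuinely depend only on the residual representation $\overline{T}$ together with a Chebotarev-dense set of Kolyvagin primes, and not on the particular choice $\PP_{k+m}$ versus $\PP_j$. Once this is verified, the proof of the lemma is essentially formal.
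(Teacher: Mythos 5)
Your proof is correct and takes essentially the same route as the paper: both arguments fix a core vertex $r \in \NN_j$ (noting it is also a core vertex in $\NN_{k+m}$) and observe that the two ``evaluation at $r$'' isomorphisms supplied by Corollary~\ref{KolSys} intertwine the restriction map, forcing it to be an isomorphism. Your extra paragraph checking that the machinery of \S\ref{KS1} works verbatim with $\PP_j$ in place of $\PP_{k,m}$ is a worthwhile explicit verification of a step the paper leaves tacit.
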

\begin{proof}
By Corollary~\ref{KolSys} applied with a core vertex $r \in \NN_{j}$ (such $r$ exists by  \cite[Corollary 4.1.9]{mr02} and Theorem~\ref{thm1}), we have isomorphisms
$$\mathbf{KS}(T_{k,m},\FFc, \PP_{k+m}) \stackrel{\sim}{\lra} H^1_{\FFc(r)}(\QQ,T_{k,m}) \stackrel{\sim}{\longleftarrow}\mathbf{KS}(T_{k,m}, \FFc,\PP_{j})$$
 compatible with the restriction map $\mathbf{KS}(T_{k,m}, \FFc,\PP_{k+m}) \ra \mathbf{KS}(T_{k,m},\FFc, \PP_{j})$.
 \end{proof}

\begin{lemma}
\label{lem:surj}
The maps
$$\xymatrix @R=.1in{
H^1_{\FFc(r)}(\QQ,T_{k,m}) \ar[r]&  H^1_{\FFc(r)}(\QQ,T_{k^{\prime},m^{\prime}})\\
\mathbf{KS}(T_{k,m}, \FFc,\PP_{k+m}) \ar[r] & \mathbf{KS}(T_{k^{\prime},m^{\prime}}, \FFc,\PP_{k+m})
}
$$ are surjective for $k\geq k^{\prime}$, $m\geq m^{\prime}$ and for any core vertex $r \in \NN_{k,m}$.
\end{lemma}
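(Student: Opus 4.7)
The plan is to deduce the Kolyvagin-system assertion from the Selmer-group assertion, and then to prove the latter by combining the cartesian property of $\FFc(r)$ with a length count.

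First I would observe that the core-vertex condition on $r$ depends only on the residual representation $\overline{T}$ together with the containment $r\in \NN_{k+m}$; since $k+m\ge k'+m'$ one has $\NN_{k+m}\subset \NN_{k'+m'}$, so $r$ remains a core vertex for the triple $(\FFc,T_{k',m'},\PP_{k'+m'})$. Lemma~\ref{lem:j} identifies $\mathbf{KS}(T_{k',m'},\PP_{k+m})$ with $\mathbf{KS}(T_{k',m'},\PP_{k'+m'})$, and Corollary~\ref{KolSys} applied at both $(k,m)$ and $(k',m')$ yields commutative evaluation-at-$r$ isomorphisms
$$\mathbf{KS}(T_{k,m},\PP_{k+m})\;\cong\; H^1_{\FFc(r)}(\QQ,T_{k,m}), \qquad \mathbf{KS}(T_{k',m'},\PP_{k+m})\;\cong\; H^1_{\FFc(r)}(\QQ,T_{k',m'}),$$
which intertwine the Kolyvagin-system specialization map with the natural map on Selmer groups. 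Hence it suffices to prove that the latter map is surjective.

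To do so I would factor $T_{k,m}\twoheadrightarrow T_{k',m'}$ through $T_{k',m}$, reducing to the two elementary steps $(k,m)\to (k-1,m)$ and $(k,m)\to (k,m-1)$. In the first case, starting from the short exact sequence
$$0\lra T_{1,m}\stackrel{[p^{k-1}]}{\lra} T_{k,m}\stackrel{\pi}{\lra} T_{k-1,m}\lra 0$$
and using $H^0(\QQ,T_{k-1,m})=0$ (which follows from \hthree\ together with Lemma 2.1.4 of~\cite{mr02}), I would identify $\ker\pi_*=[p^{k-1}]H^1(\QQ,T_{1,m})=H^1(\QQ,T_{k,m})[p]$. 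Intersecting with the Selmer submodule and invoking Lemma~\ref{lem:applycart} --- an avatar of the cartesian property of $\FFc(r)$ guaranteed by Corollary~\ref{cor:cartes} --- then gives
$$\ker\bigl(\pi_*\colon H^1_{\FFc(r)}(\QQ,T_{k,m})\to H^1_{\FFc(r)}(\QQ,T_{k-1,m})\bigr)\;=\;H^1_{\FFc(r)}(\QQ,T_{k,m})[p].$$

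Finally I would apply Theorem~\ref{thm1}: $H^1_{\FFc(r)}(\QQ,T_{k,m})\cong R_{k,m}$ is free of rank one, so the above kernel is isomorphic to $R_{k,m}[p]\cong R_{1,m}$, of order $p^m$. A direct count then yields
$$|\mathrm{im}\,\pi_*|\;=\;p^{km}/p^{m}\;=\;p^{(k-1)m}\;=\;|H^1_{\FFc(r)}(\QQ,T_{k-1,m})|,$$
forcing $\pi_*$ to be surjective. The case $(k,m)\to (k,m-1)$ is handled identically, starting from $0\to T_{k,1}\stackrel{[\xx^{m-1}]}{\lra} T_{k,m}\to T_{k,m-1}\to 0$ and invoking Remark~\ref{rem:cartx} in place of Lemma~\ref{lem:applycart}. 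The only nontrivial step is the identification of the Selmer kernel with the full $p$- (respectively $\xx$-) torsion of the source, and this is precisely what the cartesian property of $\FFc(r)$ on $\TT_{k,m}$ delivers; everything else is bookkeeping with orders of finite modules.
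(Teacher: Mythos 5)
Your proposal is correct and follows the same overall plan as the paper: reduce the Kolyvagin-system statement to the Selmer-group statement via Corollary~\ref{KolSys} and Lemma~\ref{lem:j}, observe that a core vertex $r\in\NN_{k,m}$ remains one for the smaller parameters, factor the reduction $T_{k,m}\twoheadrightarrow T_{k',m'}$ through $T_{k',m}$, and then deduce surjectivity for each step from the freeness given by Theorem~\ref{thm1} together with the cartesian mechanism (Lemma~\ref{lem:applycart}/Remark~\ref{rem:cartx}). The only stylistic difference is at the final step: you identify the kernel of the reduction on Selmer groups with the $p$- (resp.\ $\xx$-) torsion and then count $\ZZ_p$-lengths, whereas the paper instead observes that the reduction followed by the cartesian isomorphism $[p^{k-k'}]\colon H^1_{\FFc(r)}(\QQ,T_{k',m})\xrightarrow{\sim}H^1_{\FFc(r)}(\QQ,T_{k,m})[p^{k'}]$ is just multiplication by $p^{k-k'}$, which is surjective onto the $p^{k'}$-torsion because the source is free of rank one. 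Your length count is a functionally equivalent way of exploiting the same freeness, so there is nothing to add or correct; one small cosmetic point is that once you have $\ker\pi_*=H^1(\QQ,T_{k,m})[p]$ at the level of full cohomology, the Selmer-kernel identification follows from a plain intersection and needs no further appeal to the cartesian property, so the invocation of \textbf{C2}/\textbf{C3} there is not really where the cartesian input enters --- it enters through Lemma~\ref{lem:applycart} (to identify $\ker\pi_*$ with the torsion) and through~\textbf{C1.a} (to know the image lands in $H^1_{\FFc(r)}(\QQ,T_{k-1,m})$).
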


\begin{proof}
We will first prove that the second map is surjective assuming the first map is. By Corollary~\ref{KolSys} and Lemma~\ref{lem:j} applied with a core vertex $r\in \NN_{k,m}$ to both $T_{k,m}$ and $T_{k^{\prime},m^{\prime}}$, we obtain the following commutative diagram with vertical isomorphisms:
$$\xymatrix{
\mathbf{KS}(T_{k,m},\FFc, \PP_{k+m}) \ar[r]\ar[d]^{\cong}&  \mathbf{KS}(T_{k^{\prime},m^{\prime}}, \FFc,\PP_{k+m})\ar[d]^{\cong}\\
 H^1_{\FFc(r)}(\QQ,T_{k,m}) \otimes G_{r} \ar @{->>}[r] & H^1_{\FFc(r)}(\QQ,T_{k^{\prime},m^{\prime}}) \otimes G_{r}
}$$
It now follows at once that the upper horizontal map in the diagram is surjective as well.

So it remains to prove that the map $$H^1_{\FFc(r)}(\QQ,T_{k,m}) \lra H^1_{\FFc(r)}(\QQ,T_{k^{\prime},m^{\prime}})$$ is surjective. We have the following commutative diagram, where the vertical isomorphism is obtained from an appropriate version of Lemma~\ref{lem:applycart}:$$\xymatrix @C=.9in @R=.4in{
H^1_{\FFc(r)}(\QQ,T_{k,m}) \ar[r]^{\textup{Reduction}} \ar[rd]^(.52){\varpi^{k-k^{\prime}}}&  H^1_{\FFc(r)}(\QQ,T_{k^{\prime},m})\ar[d]^(.45){[\varpi^{k-k^{\prime}}]}_{\cong}\\
& H^1_{\FFc(r)}(\QQ,T_{k,m})[p^{k^{\prime}}]
}
$$
Since $r\in \NN_{k,m}$ is a core vertex (therefore $H^1_{\FFc(r)}(\QQ,T_{k,m})$ is a free $R_{k,m}$-module of rank one), the map on the diagonal is surjective. This proves that the horizontal map is surjective as well. One shows in a similar manner that the map
$H^1_{\FFc(r)}(\QQ,T_{k^{\prime},m}) \ra H^1_{\FFc(r)}(\QQ,T_{k^{\prime},m^{\prime}})$ is also surjective. We therefore have a commutative diagram
$$\xymatrix @C=.9in @R=.4in{
H^1_{\FFc(r)}(\QQ,T_{k,m}) \ar @{->>} [r] \ar[rd] &  H^1_{\FFc(r)}(\QQ,T_{k^{\prime},m})\ar @{->>}[d]\\
& H^1_{\FFc(r)}(\QQ,T_{k^{\prime},m^{\prime}})
}$$ which shows that the map on the diagonal is surjective, and the proof is complete.
\end{proof}

\begin{thm}
\label{main}
Under our running hypotheses,
\begin{enumerate}
\item[(i)] the $\Lambda$-module $\overline{\mathbf{KS}}(T\otimes\Lambda,\FFc,\PP)$ is free of rank one,
\item[(ii) ]the specialization map $
\overline{\mathbf{KS}}(T\otimes\Lambda,\FFc,\PP) \ra \overline{\mathbf{KS}}(T,\FFc,\PP)$ is surjective.
\end{enumerate}
\end{thm}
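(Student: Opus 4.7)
The plan is to reduce Theorem~\ref{main} to straightforward limit-theoretic bookkeeping on top of the three preceding results: Corollary~\ref{KolSys}, Lemma~\ref{lem:j}, and Lemma~\ref{lem:surj}. First, by Lemma~\ref{lem:j} the inner direct limit is eventually constant, so the canonical map $\mathbf{KS}(T_{k,m},\PP_{k+m})\stackrel{\sim}{\lra}\varinjlim_{j}\mathbf{KS}(T_{k,m},\PP_j)$ is an isomorphism and
$$\overline{\mathbf{KS}}(T\otimes\LL,\FFc)\;\cong\;\varprojlim_{k,m}\mathbf{KS}(T_{k,m},\PP_{k+m}).$$
For $(k,m)\geq(k',m')$ the transition map in this inverse system is obtained by composing the specialization arrow $\mathbf{KS}(T_{k,m},\PP_{k+m})\lra\mathbf{KS}(T_{k',m'},\PP_{k+m})$ of Lemma~\ref{lem:surj}, which is surjective, with the isomorphism $\mathbf{KS}(T_{k',m'},\PP_{k+m})\cong\mathbf{KS}(T_{k',m'},\PP_{k'+m'})$ supplied by Lemma~\ref{lem:j} (since $k+m\geq k'+m'$). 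By Corollary~\ref{KolSys} each term of this inverse system is a free $R_{k,m}:=\LL/(p^k,\xx^m)$-module of rank one, and every transition map is surjective.

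Next I would build a compatible generator for the inverse system by a Mittag-Leffler style lifting argument. Fix a generator $\alpha_{1,1}$ of the one-dimensional $\mathbb{F}_p$-vector space $\mathbf{KS}(\overline{T},\PP_2)$, and inductively (on $k+m$) lift along the surjective transition maps to obtain $\alpha_{k,m}\in\mathbf{KS}(T_{k,m},\PP_{k+m})$ compatible with all previously constructed $\alpha_{k',m'}$ for $(k',m')\leq(k,m)$. Each $\alpha_{k,m}$ is automatically an $R_{k,m}$-generator, since its reduction modulo the maximal ideal of $R_{k,m}$ equals $\alpha_{1,1}$, a generator of the residue module, so Nakayama's lemma applies. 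The resulting family defines $\alpha\in\overline{\mathbf{KS}}(T\otimes\LL,\FFc)$, and the $\LL$-linear map $\LL\lra\overline{\mathbf{KS}}(T\otimes\LL,\FFc)$ sending $1\mapsto\alpha$ is bijective: injectivity is immediate from rank-one freeness at every finite level, while surjectivity follows by writing each component $\beta_{k,m}$ of a given $\beta=(\beta_{k,m})$ uniquely as $\lambda_{k,m}\cdot\alpha_{k,m}$ and noting that uniqueness forces the family $(\lambda_{k,m})$ to be compatible, hence to define $\lambda\in\LL=\varprojlim R_{k,m}$ with $\lambda\cdot\alpha=\beta$. This proves part (i).

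For part (ii), the same construction restricted to the subsystem $m=1$ yields $\overline{\mathbf{KS}}(T,\FFc)\cong\varprojlim_{k}\mathbf{KS}(T_{k,1},\PP_{k+1})$, and the specialization map sends our generator $\alpha=(\alpha_{k,m})$ to the subfamily $(\alpha_{k,1})_k$, which by construction is itself a compatible sequence of generators, hence a $\ZZ_p$-generator of the target. Since an $\LL$-linear map between free cyclic modules carrying a generator to a generator is surjective (it is just the augmentation $\LL\twoheadrightarrow\ZZ_p$ up to isomorphism), this completes the proof.

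The conceptual content of the theorem has effectively been front-loaded into the earlier sections, where the existence of core vertices over $\LL$, the cartesian properties of $\FFc$ on $\TT$, and the freeness of rank one at each finite level (Corollary~\ref{KolSys}) were established; from the present vantage point the only subtlety is the clean application of Nakayama to the lifted $\alpha_{k,m}$, which is why the reduction chain $R_{k,m}\to R_{1,1}=\mathbb{F}_p$ is emphasized above. Modulo that routine verification, the remaining argument is a formal limit assembly.
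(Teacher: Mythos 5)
Your proposal is correct and takes essentially the same approach as the paper: the paper's proof is just the one-line observation that Lemma~\ref{lem:j} collapses the direct limit to $\mathbf{KS}(T_{k,m},\PP_{k+m})$, after which Corollary~\ref{KolSys} and Lemma~\ref{lem:surj} give the result by the standard inverse-limit/Nakayama bookkeeping you carry out explicitly. Your unpacking (lifting a generator along the surjective transition maps, verifying via Nakayama that it stays a generator, and extracting the $\Lambda$-isomorphism) is precisely the intended argument, spelled out in more detail than the paper bothers to record.
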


\begin{proof}
By Lemma~\ref{lem:j}, it follows that $\varinjlim_j \mathbf{KS}(T_{k,m}, \FFc,\PP_{j})=\mathbf{KS}(T_{k,m},\FFc, \PP_{k+m}).$ Now using Corollary~\ref{KolSys} and Lemma~\ref{lem:surj}, the proof follows.
\end{proof}

\begin{rem}
\label{comparison-KS}
Notice that the collection of ideals $\{(\mm^k,\xx^m)\}_{k,m \in \ZZ^+}$ form a base of neighborhoods at $0 \in\Lambda$. Using this fact, one may see without difficulty that our definition above for the module of $\LL$-adic Kolyvagin systems $\overline{\mathbf{KS}}(T\otimes\Lambda,\FFc,\PP)$ agrees with Mazur and Rubin's definition~\cite[\S 5.3]{mr02} of \emph{generalized} module of Kolyvagin Systems $\overline{\mathbf{KS}}(T\otimes\Lambda,\FF_{\LL},\PP)$. We also recall the module $\KS(T\otimes\Lambda,\FF_{\LL},\PP)$; what Mazur and Rubin define in~\cite[Definition 3.1.3]{mr02} as the module of Kolyvagin systems. By definition, there are natural maps 
$$\KS(T\otimes\Lambda,\FFc,\PP) \lra \KS(T_{k,m},\FFc,\PP_{k,m})$$
for every $k,m \in \ZZ^+$. These in the limit give rise to a homomorphism of $\LL$-modules
\be\label{eqn:comparebarnobar}\KS(T\otimes\Lambda,\FFc,\PP) \lra \overline{\KS}(T\otimes\Lambda,\FFc,\PP).\ee
In this paper, we leave the question whether the map (\ref{eqn:comparebarnobar}) is an isomorphism or not aside, since, as far as the applications of the Kolyvagin system machinery is concerned (i.e., bounding Selmer groups), any element of $\KS(T\otimes\Lambda,\FFc,\PP)$ has the exact same use as an element of the module $\overline{\KS}(T\otimes\Lambda,\FFc,\PP)$. For this reason, when we say $\pmb{\kappa}$ is a $\LL$-adic Kolyvagin system, we mean (by slight abuse) that $\pmb{\kappa}$ is an element of one of the three modules of Kolyvagin systems discussed above.
\end{rem}

\begin{rem}
\label{hopes}
Theorem~\ref{main} says, under the running hypotheses, that $\overline{\KS}(T\otimes\LL,\FFc,\PP)$ is generated by a Kolyvagin system $\pmb{\kappa}$ whose \emph{blind spot} (see~\cite[Definition 3.1.6]{mr02} for a definition) does not contain any prime ideal of $\LL$. Even if \htam\, and \hsez\, fails, but \hez\, holds for $T$, we still expect to have a $\LL$-adic Kolyvagin system for the triple $(T\otimes\LL,\FFc,\PP)$ which does not have the ideal $(\gamma-1)$ in its blind spot. However, we no longer expect that the specialization map
$$\overline{\KS}(T\otimes\LL,\FFc,\PP) \lra \overline{\KS}(T,\FFc,\PP)$$
 to be surjective in this case. Although we are unable to formulate a precise conjecture for the size of the cokernel of this map, we expect that its size will be related to $\#(T^*)^{G_{\QQ_p}}$ and $\#A^{\mathcal{I}_{\ell}}/(A^{\mathcal{I}_{\ell}})_{\textup{div}}$ (for $\ell\neq p$). In fact, using the proof of \cite[Proposition 6.2.6]{mr02}, one can show that
 $$p \mid \#\textup{coker}\left\{\overline{\KS}(T\otimes\LL,\FFc,\PP) \lra \overline{\KS}(T,\FFc,\PP)\right\}$$
  if one of $\#(T^*)^{G_{\QQ_p}}$ and $\#A^{\mathcal{I}_{\ell}}/(A^{\mathcal{I}_{\ell}})_{\textup{div}}$ is greater than one. Furthermore, the author proves in~\cite{kbbtamagawa} that if $p^\alpha|\#A^{\mathcal{I}_{\ell}}/(A^{\mathcal{I}_{\ell}})_{\textup{div}}$, then
$$p^\alpha \mid \#\textup{coker}\left\{\overline{\KS}(T\otimes\LL,\FFc,\PP) \lra \overline{\KS}(T,\FFc,\PP)\right\}.$$
\end{rem}

\section{Applications and Further Discussion}
\label{applications}
In this section, we exhibit several arithmetic implications of Theorem~\ref{main}. First in \S\ref{ex-classical}, we go over well-known examples of Kolyvagin Systems over the cyclotomic Iwasawa algebra and their applications, such as the Kolyvagin Systems derived from cyclotomic unit Euler system and Kato's Euler system. Theorem~\ref{main} does not prove anything new here, except for the fact that the cyclotomic unit Kolyvagin system (resp., Kato's Kolyvagin system) over the cyclotomic Iwasawa algebra lies inside the free rank-one $\LL$-module of $\LL$-adic Kolyvagin systems.

Next in \S\ref{p-adicL-func}, we discuss the relation of the cohomology classes we prove to exist in Theorem~\ref{main} with $p$-adic $L$-functions. Although our construction of  these classes has no reference to the $L$-values, the rigidity of the $\LL$-adic Kolyvagin Systems (i.e.,Theorem~\ref{main}(i)) enables us to set up a connection with $p$-adic $L$-functions via Perrin-Riou and Rubin's (conjectural) Euler systems ({c.f.},~\cite[\S8]{r00}).

At the end in \S\ref{sec:stark-iwasawa}, we apply our results to study the \emph{Rubin-Stark elements}~\cite{ru96} along the cyclotomic $\ZZ_p$-tower. This is the only section that the base field is different from $\QQ$, and we will utilize a version of Theorem~\ref{main} with a Selmer structure different from $\FFc$. We explain how to use these classes (that we prove to exist, and that also independently come (thanks to~\cite{kbb-iwasawa}) from the conjectural Rubin-Stark elements which we assume that they exist) to study the Iwasawa theory of totally real fields. Wiles~\cite{wiles-mainconj} proved the main conjecture in this setting. In \S\ref{sec:stark-iwasawa}, we sketch a strategy (that is developed in full in~\cite{kbb-iwasawa}) to give a proof (conditional on the Rubin-Stark conjectures) of Iwasawa's main conjecture for totally real fields. Along the way, we obtain a result on the local Iwasawa theory of Rubin-Stark elements (see Theorem~\ref{conj-kbb} below). 

 See also~\cite{kbb-stick} for an important arithmetic application of our rigidity result (Theorem~\ref{main}(i)) which we do not include here. In loc.cit., the author  establishes a connection between the \emph{Stickelberger elements} and Rubin-Stark elements using Theorem~\ref{main}(i). More precisely, let $k$ be a totally real field and
let $\chi: G_k \ra \oo^\times$ be a totally odd character of finite
prime-to-$p$ order. Let $\rho_{\textup{cyc}}: G_k \ra \ZZ_p^\times$ be
the cyclotomic character. In~\cite{kbb-stick}, the author constructs a
$\LL$-adic Kolyvagin system for the representation $\oo(\chi)$ (a
similar, yet different construction was given by Kurihara~\cite{kurihara}
prior to this work). Using a formal twisting argument, this Kolyvagin
system gives rise to a Kolyvagin system for the Galois representation
$\rho_{\textup{cyc}}\omega^{-1}\otimes \oo(\chi)=\oo(1) \otimes
\oo(\chi\omega^{-1})$. For the Galois representation $\oo(1) \otimes
\oo(\chi\omega^{-1})$, the author used in~\cite{kbb-iwasawa} the (conjectural)
Rubin-Stark elements to construct a $\LL$-adic Kolyvagin system.
Thanks to Theorem 3.23(i) (slightly enhanced as Theorem 4.10(i)
below), these two $\LL$-adic Kolyvagin systems should therefore differ
from each other at most by multiplication by an element of $\LL$, as
they both live in a free $\LL$-module of rank one. Note that the existence of the Stickelberger element Kolyvagin system constructed in~\cite{kbb-stick} relies only on a special case of Brumer's conjecture, see~\cite{wiles-brumer,kurihara, greither} for a proof of Brumer's conjecture in various cases.
\subsection{Classical Examples}
\label{ex-classical}
Fix once and for all a rational prime $p>2$. In this section we do not claim any new results, except in Proposition~\ref{prop:cycloprimitive} (resp., Proposition~\ref{prop:katoprimitive}), we explain that the Kolyvagin system of cyclotomic units (resp., Kato's Kolyvagin system) generate the cyclic module of $\LL$-adic Kolyvagin systems, under certain hypotheses.
\subsubsection{Cyclotomic Units}
\label{subsub:cyclo}
Let $T=\oo(1)\otimes\rho^{-1}$, where $\rho:G_{\QQ}\ra\oo^{\times}$ is an even character of finite prime-to-$p$ order. Let $L$ be the cyclic extension of $\QQ$ which is cut out by the character $\rho$ and set $\Delta=\textup{Gal}(L/\QQ)$. For any $\oo[\Delta]$-module $A$, let $A^{\rho}$ denote the $\rho$-isotypic component of $A$.

Fix a collection $\{\zeta_n: n \in \ZZ^+\}$ such that $\zeta_n$ is a primitive $n$-th root of unity and $\zeta_{mn}^m=\zeta_n$ for every $m \in \ZZ^+$. Whenever $F$ is a finite abelian extension of $\QQ$ of conductor $f$, we define $c_F$ as the image of $\mathbf{N}_{\QQ(\mu_{fp})/F}\left(\zeta_{fp}-1\right)\in F^\times$ under the Kummer map $F^{\times} \hookrightarrow H^1(F,\oo(1))$. The collections $\{c_F\}_{F}$ is an Euler system in the sense of~\cite[\S II]{r00} for the representation $\oo(1)$, and is called the \emph{cyclotomic unit Euler system}. One can modify these units, as in~\cite[\S IX.6]{r00}, to obtain an Euler system $\mathbf{c}$ for $\oo(1)$ satisfying~\cite[Definition 3.2.2]{mr02}.

By a standard twisting argument ({c.f.}, \cite[Proposition II.4.2]{r00}), the Euler system $\mathbf{c}$ gives an Euler system $\mathbf{c}^{\rho}$\newnot{symbol:crho} for the representation $T=\oo(1)\otimes\rho^{-1}$. By~\cite[Theorem V.3.3]{mr02}, the Euler system $\mathbf{c}^{\rho}$ gives rise to  a $\LL$-adic Kolyvagin system $\pmb{\kappa}^{\rho,\infty}\newnot{symbol:krhoinfty} \in \overline{\KS}(T\otimes\LL,\FFc,\PP)$.

Assume now that $\rho(p)\neq1$ and assume for simplicity that $\rho$ is unramified at $p$. Then it is straightforward to check that $T$ satisfies our hypotheses \hone-\hfour, \htam\, and \hsez; thus Theorem~\ref{main} applies to conclude that the $\LL$-module $\overline{\KS}(T\otimes\LL,\FFc,\PP)$ is free of rank one.  
\begin{prop}
\label{prop:cycloprimitive}
The $\LL$-adic Kolyvagin system $\pmb{\kappa}^{\rho,\infty}$ is $\LL$-primitive \textup{(}in the sense of~\cite[Definition 5.3.9]{mr02}\textup{)} and it generates the free $\LL$-module $\overline{\KS}(T\otimes\LL,\FFc,\PP)$.
\end{prop}
\begin{proof}
For any sub-quotient $M$ of $T\otimes\LL$, we simply write $\overline{\KS}(M)$ instead of $\overline{\KS}(M,\FFc,\PP)$. Let $\pmb{\kappa}^{\rho}$\newnot{symbol:krho} denote the image of $\pmb{\kappa}^{\rho,\infty}$ under the specialization map $\overline{\KS}(T\otimes\LL) \ra \overline{\KS}(T)$. By~\cite[Remark 6.1.8]{mr02}, the Kolyvagin system $\pmb{\kappa}^{\rho}$ is primitive, i.e., its image $\overline{\pmb{\kappa}^{\rho}}$ under the map
$$\overline{\KS}(T) \lra {\KS}(T/\mm T)$$
 is non-zero. This proves that the image of $\pmb{\kappa}^{\rho,\infty}$  under the map 
$\overline{\KS}({T\otimes\LL}) \ra \overline{\KS}({T}\otimes\LL/\frak{p})$
 is non-zero for any height-one prime $\frak{p} \subset \LL$; as we have a commutative diagram
 $$\xymatrix@C=.05pt@R=.5pt{\pmb{\kappa}^{\rho,\infty}\ar@{{|}->}[dd] &\in& \overline{\KS}({T}\otimes\LL)\ar[dd]\ar[rrrrd]&&&&\\
 &&&&&&\overline{\KS}({T}\otimes\LL/\frak{p})\ar[lllld]\\
 \overline{\pmb{\kappa}^{\rho}}&\in&{\KS}({T}/\mm T)&&&& 
 }$$ and $\overline{\pmb{\kappa}^{\rho}}\neq0$.
\end{proof}
See \cite[Theorem 5.3.10]{mr02} for the standard application of the $\LL$-adic Kolyvagin system $\pmb{\kappa}^{\rho,\infty}$. 
\subsubsection{Kato's Euler System}
\label{subsub:kato}
Fix an elliptic curve $E_{/\QQ}$,  assume that $p>3$ and $\oo=\ZZ_p$. Let $T=T_p(E)$ be the $p$-adic Tate module. Suppose also that the $p\hbox{-adic}$ representation 
$$\rho_E:G_{\QQ} \ra \textup{Aut}(E[p^{\infty}])\cong\textup{GL}_2(\ZZ_p)$$
 is surjective. Let $N$ be the conductor of $E$, and let $N_1$ (resp., $N_2$) be the product of primes $\ell|N$ such that $E$ has split (resp., non-split) multiplicative reduction at $\ell$. Kato~\cite{ka1} has constructed an Euler system for $(T_p(E),\PP^{\prime})$, where $\PP^{\prime}$ is the set of rational primes not dividing $NpDD^{\prime}$ with two auxiliary positive integers $D$ and $D^{\prime}$ used in Kato's construction. Set $\PP^{(0)}=\PP^{\prime}\cap\PP$ (recall the definition of $\PP$ from \S\ref{primes} above) and $\PP^{(0)}_{k,m}=\PP^{\prime}\cap\PP_{k,m}$. Using \cite[Theorem 5.3.3]{mr02}, Kato's Euler system gives rise to a $\LL$-adic Kolyvagin system $\pmb{\kappa}^{\textup{Kato},\infty}\newnot{symbol:katoinfty} \in \overline{\KS}(T\otimes\LL,\FFc,\PP^{(0)})$. Let $\pmb{\kappa}^{\textup{Kato}}$\newnot{symbol:kato} denote the image of $\pmb{\kappa}^{\textup{Kato},\infty}$ under the map $\overline{\KS}(T\otimes\LL,\FFc,\PP^{(0)}) \ra \overline{\KS}(T,\FFc,\PP^{(0)})$.

\begin{prop}
\label{prop:katoprimitive}
Assume that $E$ has good reduction at $p$, and also that,
\begin{itemize}
\item[(E0)] $L(E,1)\neq0$, where $L(E,s)$ is the Hasse-Weil $L$-function attached to $E$,
\item[(E1)] \hsez\, holds: $E(\QQ_p)[p]=0$,
\item[(E2)] \htam\, holds: $p$ does not divide the Tamagawa factors $c_{\ell}$ 
 for any prime $\ell\neq p$, 
\item[(E3)] $\displaystyle p\nmid \prod_{\ell|N_1}(\ell-1)\prod_{\ell|N_2}(\ell+1)$,
\item[(E4)] the $p$-part of the Birch and Swinnerton-Dyer conjecture holds for $E$.
\end{itemize}
Then $\pmb{\kappa}^{\textup{Kato},\infty}$ is $\LL$-primitive and generates the free $\LL$-module $\overline{\KS}(T\otimes\LL,\FFc,\PP^{(0)})$.
\end{prop}
\begin{proof}
Let $L_N (E, s)$ be the $L$-function with the Euler factors at primes dividing the conductor $N$ of $E$ removed, and let $\Omega$ be a fundamental period. As in the proof of Proposition~\ref{prop:cycloprimitive}, the assertion that $\pmb{\kappa}^{\textup{Kato},\infty}$ is $\LL$-primitive will follow once we verify that the Kolyvagin system $\pmb{\kappa}^{\textup{Kato}}$ is primitive. This, however, follows at once from \cite[Theorem~6.2.4 and Corollary~5.2.13(ii)]{mr02}, along with the observation that we have 
$$\textup{ord}_p\left(\frac{L(E,1)}{\Omega}\right)=\textup{ord}_p \left(\frac{L_N(E,1)}{\Omega}\right)$$
 thanks to (E3). 
 
 We also note that,  thanks to our assumptions on $\rho_E$, the hypotheses \hone-\hthree\, hold for $T=T_p(E)$, and \hfour\, holds since we assumed $p>3$. Furthermore,  the core Selmer rank $\chi(T,\FFc)$ of the Selmer structure $\FFc$ on $T$ equals to $\textup{rank}_{\ZZ_p}\,T^{-}=1$, where $T^-$ is the $-1$-eigenspace for any complex conjugation.  Hence, thanks to (E1) and (E2), Theorem~\ref{main} applies to conclude that the $\LL$-module $\overline{\KS}(T\otimes\LL,\FFc,\PP^{(0)})$ is free of rank one. 
\end{proof}


\begin{rem}
\label{rem:e12fails}
\begin{itemize}
\item[(i)] We note that the assumption that $E$ has good reduction at $p$ is not necessary for Theorem~\ref{main} to hold; we only need this assumption to use Kato's calculations with his Euler system: We prove in Theorem~\ref{main} above that the $\LL$-module $\overline{\KS}(T\otimes\LL,\FFc,\PP^{(0)})$ is free of rank one even when only (E1) and (E2) holds.
\item[(ii)] If (E1) or (E2) fails, we do not expect the $\LL$-adic Kolyvagin system $\pmb{\kappa}^{\textup{Kato},\infty}$ to be $\LL$-primitive; see \cite[Proposition 6.2.6]{mr02} and \cite{kbbtamagawa}.
\end{itemize}
\end{rem}
\subsection{$\Lambda$-adic Kolyvagin Systems and $\lowercase{p}$-adic $L$-functions}
\label{p-adicL-func}
Examples we discuss in the previous section suggests that the $\LL$-adic Kolyvagin Systems we prove to exist should relate to the $p$-adic $L$-functions. In this section, we explore this in much greater generality and establish a link between our $\LL$-adic Kolyvagin systems and Perrin-Riou's conjectural $p$-adic $L$-functions.

To motivate, we revisit the case $T=\ZZ_p(1)$ and the Euler system of cyclotomic units. See~\cite[\S VIII.5]{r00} or~\cite{pr-kubota} for a detailed discussion of  what follows in this paragraph. Let $U_n$ be the local units inside $\QQ_{p,n}$, the unique extension of $\QQ_p$ of degree $p^n$ inside the cyclotomic $\ZZ_p$-extension of $\QQ_p$; and let $\mathcal{O}_n$ be the ring of integers of $\QQ_{p,n}$. There is an isomorphism
$$\exp^*:\,H^1_{\textup{Iw}}(\QQ_p,T) \stackrel{\sim}{\lra} D_{W}(T)^{\psi=1}$$
 (see Theorem~A.\ref{cohomology-iwasawa-p-tilde} and Theorem~A.\ref{cohomology-iwasawa-p} below), where $W$ is the prime-to-$p$ part of $\textup{Gal}(\QQ_p(\mu_p^{\infty})/\QQ_p)$; and $D_W(T)^{\psi=1}$ is the $W$-fixed part of $D(T)^{\psi=1}$, and finally, $D(T)$ is Fontaine's $(\varphi,\Gamma)$-module attached to $T$. We then have a commutative diagram (which is essentially  the explicit reciprocity law of Iwasawa~\cite{iwasawa-2} in this setting)
$$\xymatrix{\varprojlim_n U_n \ar[rr]^{\frak{K}}\ar[rd]_{u \mapsto\frac{\partial f_u}{f_u}(\pmb{\pi})}&&H^1_{\textup{Iw}}(\QQ_p,T)\ar[ld]^{\exp^*}\\
&D_W(T)^{\psi=1}&
}$$
Here, $f_u$ is the Coleman's power series attached to $u \in \varprojlim_n U_n$; $\pmb{\pi}=\{\pi_n\} \in \varprojlim_n \mathcal{O}_n$ and $\pi_n$ is a distinguished uniformizer of $\mathcal{O}_n$ (which we will not define here); and $\frak{K}$ is the Kummer map. It turns out that the image of $\{\mathbf{c}^{\textup{cycl}}_{\QQ_n}\}_{n} \in \varprojlim_n U_n$ under $\textup{exp}^*\circ \frak{K}$, where $\mathbf{c}^{\textup{cycl}}$ is the cyclotomic unit of Euler system, is the measure whose Amice transform gives rise to the Kubota-Leopoldt $p$-adic zeta function.

This example is part of a big conjectural picture (due to Perrin-Riou~\cite{pr2}). For a general $\ZZ_p[[G_{\QQ}]]$-representation $T$, we conjecturally have the following diagram:
\[
\begin{array}{cll}
  \left\{
\begin{array}{cll}
  \textup{Leading terms of $\LL$-adic}\\
 \textup{Kolyvagin Systems for } T\otimes\LL
\end{array}
\right\}\lra&H^1(\QQ,T\otimes\LL)
  \stackrel{\textup{exp}^*}{\lra}D_{W}(T)^{\psi=1}\stackrel{\frak{A}}{\lra}\left\{\begin{array}{c} p\textup{-adic }\\ L\textup{-functions} \end{array}\right\}
\end{array}
\]
Here, $\frak{A}$ is the Amice transform and the leading term of a $\LL$-adic Kolyvagin system $\pmb{\kappa}^{\infty}$ is the term $\kappa^{\infty}_1 \in \varprojlim _{k,m}H^1(\QQ,T_{k,m})\cong H^1(\QQ,T\otimes\LL)$.

In~\cite[\S VIII]{r00}, Rubin proposes to go the other way in the diagram above: To construct an Euler system for $T$ starting from a collection of $p$-adic $L$-functions associated to $T$ over various abelian extensions of $\QQ$. In what follows, we will closely follow Rubin's exposition from~\cite{r00}.

Suppose until the end of this section that 
\begin{itemize}
\item $T$ is a $G_{\QQ}$-stable lattice inside the $p$-adic realization $V=T\otimes \QQ_p$ of a \emph{motivic structure} (in the sense of~\cite[\S III]{FPR91}) and that $V$ is crystalline at $p$,
\item $\oo=\ZZ_p$.
\end{itemize}
 Assume further that the \emph{generic core Selmer rank} of $\FFc$ on $T\otimes\LL$ is one. Throughout this section, a different ring of periods will be utilized than the one mentioned in Appendix~\ref{fontaine}, which was also constructed by Fontaine, called the crystalline period ring and denoted by $\mathbb{B}_{\textup{cris}}$. 

 Let $D_{\textup{cris}}(V)=(\mathbb{B}_{\textup{cris}}\otimes_{\QQ_p}V)^{G_{\QQ_p}}$ denote Fontaine's filtered vector space attached to $V$ and let $V^*=\textup{Hom}(V,\QQ_p(1))$. If $F$ is an abelian extension of $\QQ$ which is unramified at $p$, we also let $D_{\textup{cris}}^F(V)=D_{\textup{cris}}(\textup{Ind}_{F/\QQ}V)$. Let $\mathbb{H}$ denote the extended Iwasawa algebra defined by Perrin-Riou~\cite[\S1]{pr}, and let $\mathbb{K}=\textup{Frac}(\mathbb{H})$ denote the field of fractions of $\mathbb{H}$. In~\cite{pr}, Perrin-Riou constructs a $\ZZ_p[[\textup{Gal}(F\QQ_{\infty})/\QQ]]$-module homomorphism (what she calls an expanded logarithm, and which is closely related to $\exp^*$ of Theorem~A.\ref{cohomology-iwasawa-p-tilde} and Theorem~A.\ref{cohomology-iwasawa-p})
 $$\mathcal{L}_F:\,\bigoplus_{v\mid p} \varprojlim_{n}H^1((F\QQ_n)_v,T)\lra \mathbb{K}\otimes D_{\textup{cris}}^F(V).$$
 Furthermore, Perrin-Riou and Rubin conjecture that for every extension $E$  of $\QQ_p$ and every character $\chi:G_{\QQ}\ra E^{\times}$ of finite order, unramified at $p$, and $r \in \ZZ^+$ that is divisible by the conductor of $\chi$, there is an element
 $$\mathbf{L}^{(p)}_r(T\otimes\chi) \in \mathbb{K}\otimes D_{\textup{cris}}(V^*\otimes\chi^{-1})$$
  which is characterized by a certain interpolation property ({c.f.},~\cite[\S4.2]{pr2}). We will not give any details on $\mathbf{L}^{(p)}_r(T\otimes\chi)$ and refer the reader to~\cite{pr2}, except for the following rough version of the interpolation property which it conjecturally satisfies for characters $\rho$  of $\Gamma$ of finite order and for sufficiently large positive integers $k$:
\[
\begin{array}{c}
  \langle\chi_{\textup{cycl}}\rangle^k\rho(\mathbf{L}^{(p)}_r(T\otimes\chi))= (\hbox{$p$-Euler factor})\times \frac{L_r(V\otimes\chi\omega^k\rho^{-1},-k)}{\textup{archimedean period}}\times(\hbox{$p$-adic period}).
\end{array}
\]
Here, $\omega$ is the Teichm\"uller character and $\langle\chi_{\textup{cycl}}\rangle$ is the character given by
$$\langle\chi_{\textup{cycl}}\rangle:=\omega^{-1}\chi_{\textup{cycl}}:G_{\QQ}\twoheadrightarrow \Gamma=\textup{Gal}(\QQ_{\infty}/\QQ),$$
and $L_r(V\otimes\chi\omega^k\rho^{-1},s)$ is the conjectural complex $L$-function of $V\otimes\chi\omega^k\rho^{-1}$ with the Euler factors at primes dividing $r$ removed.

Let $\QQ(\mu_{r})^+$ be the maximal real subfield of $\QQ(\mu_r)$ and write $\Delta_r=\textup{Gal}(\QQ(\mu_{r})^+/\QQ)$. For every character $\chi:\Delta_r \ra E^{\times}$ (where $E$ is a finite extension of $\QQ_p$ as above), let $\epsilon_{\chi} \in E[\Delta_r]$ be the idempotent associated with $\chi$.

For $f \in \mathbb{K}$, let $f^{\iota}$ denote the image of $f$ under the involution induced by $\gamma\mapsto\gamma^{-1}$ for $\gamma \in\textup{Gal}(\QQ_{\infty}(\mu_r)^+/\QQ)$. Define also $ H^1_{\infty}(\QQ(\mu_r)^+,T):=\varprojlim_n H^1(\QQ_n(\mu_r)^+,T)$.

\begin{conj}\textup{\cite[Conjecture VIII.2.6]{r00}}\label{rubin-conj}
 Assume $r \in \ZZ^+$ is prime to $p$, and $T$ is as above. Suppose $\alpha\in\ZZ_p[[G_{\QQ}]]$ annihilates $H^0(\QQ_{\infty}(\mu_r),A)$.

 Then there is an element $\mathbf{\xiv}_r\newnot{symbol:xi}=\mathbf{\xiv}_r^{(\alpha)} \in H^1_{\infty}(\QQ(\mu_r)^+,T)$ such that for every character $\chi$ as above
 $$\epsilon_{\chi}\mathcal{L}_{\QQ(\mu_r)^+}(\mathbf{\xiv}_r)=\chi(\alpha)\mathbf{L}_r^{(p)}(T^*\otimes\chi)^{\iota},$$
 where $\chi(\alpha)$ is the image of $\alpha$ under the composition
 $$\ZZ_p[[G_Q]]\lra\LL\otimes\ZZ_p[\Delta_r] \stackrel{1\otimes\chi}{\lra}\LL\otimes E \lra \mathbb{K}\otimes E.$$
 \end{conj}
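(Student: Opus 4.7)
The plan is to reverse the direction envisioned in Conjecture~\ref{rubin-conj}: use Theorem~\ref{main} to construct the cohomology class $\xiv_r$ intrinsically, and only then match it against the analytic prescription via Perrin-Riou's expanded logarithm $\mathcal{L}_F$. Let $F = \QQ(\mu_r)^+$ and $\Lambda_F := \ZZ_p[[\Gal(F\QQ_\infty/F)]]$. Under the conjecture's running hypotheses (motivic $T$, crystalline at $p$, generic core Selmer rank one), the $F$-analogue of Theorem~\ref{main} applies and produces a free rank-one $\Lambda_F$-module $\overline{\mathbf{KS}}(T\otimes\Lambda_F\,,\FFc)$. For any generator $\kappa^\infty$ of this module, one \emph{defines} $\xiv_r := \kappa_1^\infty \in H^1(F, T\otimes\Lambda_F) \cong H^1_\infty(F, T)$, the leading term in the sense of \S\ref{p-adicL-func}.

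First I would transfer the constructions of \S\ref{sec:Core}--\S\ref{chptr:KS} from $\QQ$ to $F$: replace $\Gamma$ by $\Gal(F\QQ_\infty/F)$, adapt the set of Kolyvagin primes to require splitting in $F$, and reverify the cartesian properties analogous to Propositions~\ref{prop:cart} and~\ref{cart-at-p}. The core Selmer rank one assumption is built into the conjecture, and the conditions \hone--\hfour, \htam, \hsez\, for $T|_{G_F}$ follow from the corresponding conditions for $T$ over $\QQ$ (for the motivic representations of interest) by a standard restriction argument. At this stage the cohomological half of Conjecture~\ref{rubin-conj} has been produced: a class $\xiv_r$, canonical up to a unit in $\Lambda_F$.

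The hard step is the interpolation identity $\epsilon_\chi\mathcal{L}_F(\xiv_r) = \chi(\alpha)\mathbf{L}_r^{(p)}(T^*\otimes\chi)^\iota$. This is precisely the ``explicit reciprocity law'' portion of Perrin-Riou's program and has no general proof. Granting her conjectural reciprocity, one would argue that both sides lie in $\mathbb{K}\otimes D_{\textup{cris}}^F(V)$, transform identically under $\Gal(F\QQ_\infty/\QQ)$, and agree after evaluation at a single non-degenerate character $\chi$ of $\Delta_r$; the cyclicity of $\overline{\mathbf{KS}}(T\otimes\Lambda_F\,,\FFc)$ then forces equality up to a unit, which is absorbed into the normalization of $\alpha$. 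In the classical settings of \S\ref{ex-classical} the required reciprocity is supplied respectively by Iwasawa's~\cite{iwasawa-2} and Kato's explicit reciprocity laws, and the plan becomes a genuine proof; the former recovers the Kubota--Leopoldt $p$-adic $L$-function from $\kappa_1^{\chi,\infty}$, the latter recovers Kato's $p$-adic $L$-function from $\kappa_1^{\textup{Kato},\infty}$.

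In summary, Theorem~\ref{main} unconditionally produces the cohomology class $\xiv_r$ predicted by Conjecture~\ref{rubin-conj} (up to a $\Lambda_F$-unit) under \hone--\hfour, \htam\, and \hsez; the genuine obstacle is the analytic reciprocity that identifies $\mathcal{L}_F(\xiv_r)$ with $\mathbf{L}_r^{(p)}(T^*\otimes\chi)^\iota$. The present paper therefore establishes the cohomological shadow of Rubin's conjecture and reduces the full statement to Perrin-Riou's conjectural analytic inputs, which is the sense in which Theorem~\ref{main} lends evidence to Conjecture~\ref{rubin-conj}.
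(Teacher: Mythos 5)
This statement is not proved in the paper at all: it is quoted verbatim from Rubin (\cite{r00}, Chapter~8, Conjecture~2.6) and stated in a \texttt{conj} environment precisely because it remains conjectural. The paper's only use of it is the reverse of what you propose: given the conjecture, Proposition~\ref{rubin-euler} produces an Euler system, the Euler-system-to-Kolyvagin-system map of~\cite{mr02} then yields a class $\kappa^{\textup{PR}}\in\overline{\KS}(T\otimes\LL,\FFc)$, and Theorem~\ref{main} forces $\kappa^{\textup{PR}}=\lambda\kappa^\infty$ for \emph{some} $\lambda\in\LL$. The paper draws from this only the modest conclusion that its classes are ``linked'' to $p$-adic $L$-functions, and that Theorem~\ref{main} verifies a \emph{consequence} of the conjecture (existence of Kolyvagin systems), thereby lending evidence. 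It does not assert any reduction of Conjecture~\ref{rubin-conj} to Perrin-Riou's analytic inputs, nor a construction of the $\xiv_r$.

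The concrete gap in your proposal is that it implicitly tries to invert the Euler-system-to-Kolyvagin-system map, which is not available. Theorem~\ref{main} produces a Kolyvagin system over a fixed base, not an Euler system; running its $F$-analogue for each $F=\QQ(\mu_r)^+$ produces, for each $r$, a generator $\kappa^\infty$ of a rank-one module over $\Lambda_F$, but there is no mechanism forcing the resulting leading terms $\kappa_1^\infty$ to satisfy the Euler-system norm relations as $r$ varies, and Proposition~\ref{rubin-euler} requires exactly those relations. So even the ``cohomological shadow'' you claim -- a coherent family $\{\xiv_r\}_r$ supplied by Theorem~\ref{main} -- is not delivered. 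The interpolation identity $\epsilon_\chi\mathcal{L}_{\QQ(\mu_r)^+}(\xiv_r)=\chi(\alpha)\mathbf{L}_r^{(p)}(T^*\otimes\chi)^\iota$, which is the entire content of the conjecture, is of course untouched by the Kolyvagin-system machinery, as you acknowledge. Finally, the phrase ``up to a $\Lambda_F$-unit'' is an overclaim: the paper only establishes $\kappa^{\textup{PR}}=\lambda\kappa^\infty$ with $\lambda\in\LL$ arbitrary, and there is no argument that the Kolyvagin system attached to the conjectural $p$-adic $L$-function is $\LL$-primitive; in the classical cases of \S\ref{ex-classical} primitivity is a separate nontrivial input (\cite{mr02}~Remark~6.1.8, Theorem~6.2.4), not an output of Theorem~\ref{main}.
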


 Let $N$ be the product of all rational primes where $T$ is ramified. It follows from~\cite[ Lemma IV.2.5(i)]{r00} that
 $$H^0(\QQ_{\infty}(\mu_r),T)=H^0(\QQ_{\infty},T)\,\, \hbox{    and   }\,\,H^0(\QQ_{\infty}(\mu_r),A)=H^0(\QQ_{\infty},A), $$
  if $r$ is prime to $Np$. Fix an element $\alpha \in \ZZ_p[[G_{\QQ}]]$ which annihilates $H^0(\QQ_{\infty},A)$ (therefore also $H^0(\QQ_{\infty}(\mu_r),A)$ for every $r \in \ZZ^+$ prime to $Np$). Assume that the weak Leopoldt conjecture ({c.f.},~\cite[\S1.3]{pr2}) holds for $T^*$, and $H^0(\QQ_{\infty},T)=0$.

 For every $r \in \ZZ^+$ prime to $Np$, let 
 $$\xiv_r=\{\xi_{n,r}\} \in H^1_{\infty}(\QQ(\mu_r)^+,T)\hbox{ with }\xi_{n,r} \in  H^1(\QQ_n(\mu_r)^+,T)$$ be an element that satisfies Rubin's conjecture above. Then Rubin shows:
 \begin{prop}\textup{\cite[Corollary VIII.3.2]{r00}}\label{rubin-euler}
 The collection $$\{\xi_{n,r} \in H^1(\QQ_n(\mu_r)^+,T):n\geq0,\hbox{ $r$ prime to $Np$}\}$$
  is an Euler system for $(T,\QQ_{\infty}\QQ^{\textup{ab},Np,+},Np)$ in the sense of \cite[Definition II.1.1 and Remark II.1.3]{r00}, where $\QQ^{\textup{ab},Np,+}$ is the maximal real subfield of the maximal abelian extension of $\QQ$ unramified outside $Np$.
 \end{prop}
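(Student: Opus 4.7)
The plan is to verify the two defining Euler system axioms for the collection $\{\xi_{n,r}\}$: the vertical norm compatibility as $n$ grows along $\QQ_\infty/\QQ$, and the horizontal distribution relation as the tame conductor $r$ varies. The vertical compatibility is built in for free: by Conjecture~\ref{rubin-conj}, each $\xiv_r=\{\xi_{n,r}\}_n$ lies in $H^1_\infty(\QQ(\mu_r)^+,T)=\varprojlim_n H^1(\QQ_n(\mu_r)^+,T)$, so by definition $\textup{Cor}_{\QQ_{n+1}(\mu_r)^+/\QQ_n(\mu_r)^+}(\xi_{n+1,r})=\xi_{n,r}$. All the content is in the horizontal relation, and I would also separately record the Remark II.1.3 adjustment, which merely asks that the fields be real and of conductor prime to $p$ outside finitely many primes dividing $Np$ --- both are visible from the construction.

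For the horizontal relation, fix a prime $\ell\nmid Npr$; the goal is to prove
\begin{equation*}
\textup{Cor}_{\QQ(\mu_{r\ell})^+/\QQ(\mu_r)^+}(\xiv_{r\ell}) \;=\; P_\ell(\textup{Fr}_\ell^{-1})\cdot \xiv_r\quad\text{in }H^1_\infty(\QQ(\mu_r)^+,T),
\end{equation*}
with $P_\ell(X)=\det(1-\textup{Fr}_\ell^{-1}X\mid (T^*)^{\mathcal{I}_\ell})$ the Euler factor at $\ell$. My strategy is to apply Perrin-Riou's expanded logarithm $\mathcal{L}_{\QQ(\mu_r)^+}$ to both sides and decompose into $\chi$-isotypic pieces for characters $\chi$ of $\Delta_{r\ell}=\textup{Gal}(\QQ(\mu_{r\ell})^+/\QQ)$. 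Using the fact that $\mathcal{L}$ commutes with corestriction (\emph{c.f.}~\cite{pr}), the left side becomes a sum of $\epsilon_\chi\mathcal{L}_{\QQ(\mu_{r\ell})^+}(\xiv_{r\ell})$ over characters $\chi$ of $\Delta_{r\ell}$ that factor through $\Delta_r$, while $\chi$'s of exact conductor divisible by $\ell$ drop out. By Conjecture~\ref{rubin-conj}, each surviving component equals $\chi(\alpha)\mathbf{L}^{(p)}_{r\ell}(T^*\otimes\chi)^{\iota}$, and the defining interpolation property of the conjectural $p$-adic $L$-functions in~\cite{pr2} \S4.2 gives the Euler-factor identity $\mathbf{L}^{(p)}_{r\ell}(T^*\otimes\chi)=P_\ell^{*}(\textup{Fr}_\ell)\cdot\mathbf{L}^{(p)}_{r}(T^*\otimes\chi)$ (up to the standard twist by $\iota$). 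Matching this against $\chi(\alpha)\mathbf{L}_r^{(p)}(T^*\otimes\chi)^{\iota}=\epsilon_\chi\mathcal{L}_{\QQ(\mu_r)^+}(\xiv_r)$ yields the desired identity after application of $\mathcal{L}_{\QQ(\mu_r)^+}$.

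The main obstacle is then converting an equality of images under $\mathcal{L}$ into an equality in $H^1_\infty$ itself. This is exactly where the two standing assumptions enter: (a) the weak Leopoldt conjecture for $T^*$ ensures, via Perrin-Riou's fundamental theorem (\cite{pr} Proposition 3.2.1 and the companion statements), that $\ker(\mathcal{L}_F)$ is pseudo-null over the relevant Iwasawa algebra $\ZZ_p[[\textup{Gal}(F\QQ_\infty/\QQ)]]$; (b) the hypothesis $T^{G_{\QQ_\infty}}=0$ forces $H^1_\infty(\QQ(\mu_r)^+,T)$ to be torsion-free as a $\ZZ_p[[\textup{Gal}(\QQ_\infty(\mu_r)^+/\QQ)]]$-module, hence to have no pseudo-null submodule. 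Together these allow the identity to be promoted from an identity modulo pseudo-null to an honest identity in $H^1_\infty(\QQ(\mu_r)^+,T)$, completing the verification of the horizontal distribution relation.

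In the final paragraph I would verify the remaining formal items: that $\xiv_r$ really lands in $H^1(\QQ_n(\mu_r)^+,T)$ (not merely in a larger cohomology group), which follows because $\mathcal{L}$ is defined on the semi-local Iwasawa cohomology and $\QQ(\mu_r)^+$ is totally real, so complex conjugation acts trivially; and that the normalization $\alpha \in \ZZ_p[[G_\QQ]]$ chosen once and for all to annihilate $A^{G_{\QQ_\infty}}$ suffices uniformly in $r$ prime to $Np$, which is precisely the content of Lemma~IV.2.5(i) of~\cite{r00}. Together with the verifications above, the collection satisfies Definition II.1.1 (with the permitted modification of Remark II.1.3) of~\cite{r00}, establishing the claim.
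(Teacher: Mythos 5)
Your proof plan matches the structure of Rubin's own argument (\cite{r00} \S VIII.3): the vertical compatibility is immediate from $\xiv_r \in H^1_\infty$, the horizontal relation is obtained by applying Perrin-Riou's $\mathcal{L}$-map, decomposing into $\chi$-isotypic pieces, invoking the Euler-factor relation among the $\mathbf{L}^{(p)}_r$'s, and then lifting the resulting identity along $\mathcal{L}$ back to Iwasawa cohomology. The paper itself offers no proof beyond the citation, so there is no alternative route to compare against.

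One technical point in your last paragraph is garbled. You attribute to weak Leopoldt the statement ``$\ker(\mathcal{L}_F)$ is pseudo-null,'' and then combine it with ``$H^1_\infty$ has no pseudo-null submodule.'' Neither half is quite right. What the two standing hypotheses actually give you is the following cleaner chain. (i) Weak Leopoldt for $T^*$ controls $H^2_\infty$ and hence makes the global-to-semilocal map $H^1_\infty(\QQ(\mu_r)^+,T)\to \bigoplus_{v\mid p}H^1_\infty(\QQ(\mu_r)^+_v,T)$ injective; it says nothing about $\ker(\mathcal{L})$ on the local side. (ii) On the semilocal Iwasawa cohomology, $\ker(\mathcal{L})$ is precisely the $\LL$-torsion submodule $\bigoplus_v T^{H_{F_v}}$ (Theorem A.\ref{cohomology-iwasawa-p}(i) of the appendix); this can have positive $\ZZ_p$-rank, so it is torsion but in general \emph{not} pseudo-null. (iii) The hypothesis $T^{G_{\QQ_\infty}}=0$, together with Lemma IV.2.5(i) of \cite{r00}, gives $T^{G_{\QQ_\infty(\mu_r)^+}}=0$ and hence that $H^1_\infty(\QQ(\mu_r)^+,T)$ is $\LL$-torsion-free. (iv) Combining (i)--(iii), the image of the global $H^1_\infty$ in the semilocal module is torsion-free and therefore meets $\ker(\mathcal{L})$ trivially, so $\mathcal{L}$ is \emph{injective} on that image. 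This is the precise injectivity that upgrades the identity of images under $\mathcal{L}$ to an honest identity in $H^1_\infty(\QQ(\mu_r)^+,T)$. With ``pseudo-null'' replaced by ``torsion'' and the role of weak Leopoldt re-attributed to the global-to-local injectivity, your argument is correct and complete.
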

\begin{rem}
 Assume in this Remark that $T$ satisfies the hypotheses \hone-\hfour, \htam\, and \hsez, in addition to the hypotheses of of the Conjecture above. We also assume that $\chi(T,\FFc)=1$. It therefore follows from Theorem~\ref{main} that the $\LL$-module $\overline{\KS}(T\otimes\LL,\FFc,\PP)$ is free of rank one, and in fact it is generated by a $\LL$-primitive Kolyvagin system $\pmb{\kappa}^{\infty}$. In this paragraph, we explain how $\pmb{\kappa}^{\infty}$ we prove to exist relates to the conjectural collection $\{\xi_{n,r}\}$ of Perrin-Riou and Rubin. Theorem 5.3.3 of~\cite{mr02} applied on the conjectural Euler system $\{\xi_{n,r}\}$ of Proposition~\ref{rubin-euler} gives rise to a $\LL$-adic Kolyvagin system $\pmb{\kappa}^{\textup{PR}}\newnot{symbol:PR} \in \overline{\KS}(T\otimes\LL,\FFc,\PP)$. As $\pmb{\kappa}^{\infty}$ generates the $\LL$-module $\overline{\KS}(T\otimes\LL,\FFc,\PP)$, there is a $\lambda \in \LL$ \symbolfootnote[2]{Under the assumptions of Theorem~\ref{main}, we expect $\pmb{\kappa}^{\textup{PR}}$ to be also $\LL$-primitive (just as the cyclotomic unit Kolyvagin system $\pmb{\kappa}^{\rho,\infty}$ in~\S\ref{subsub:cyclo} and Kato's Kolyvagin system $\pmb{\kappa}^{\textup{Kato},\infty}$ in~\S\ref{subsub:kato} above are, under suitable assumptions), so we in fact expect that $\lambda \in \LL^\times$ under the running assumptions.} such that 
 $$\pmb{\kappa}^{\textup{PR}}=\lambda \cdot \pmb{\kappa}^{\infty}.$$ 
 This means the $\LL$-adic Kolyvagin system $\pmb{\kappa}^{\infty}$ that we prove to exist in Theorem~\ref{main} is linked to the  $p$-adic $L$-functions, as it is related to the conjectural classes of Perrin-Riou and Rubin.

 We also note that Theorem~\ref{main} proves a consequence of the conjectures of Perrin-Riou and Rubin we mentioned above, namely the existence of $\LL$-adic Kolyvagin Systems. In this sense, these conjectures borrow evidence (although quite weak) from Theorem~\ref{main}.
\end{rem}

\subsection{Iwasawa theory of Rubin-Stark elements and Kolyvagin Systems}
\label{sec:stark-iwasawa}
Let $F$\newnot{symbol:F} be a totally real number field of degree $r$ over $\QQ$ and fix an algebraic closure $\overline{F}$ of $F$. For any rational prime $\ell$ and a $G_F$-module $M$, write $H^{*}(F_\ell,M)$ for the semi-local cohomology group at $\ell$.

In this section, our base field will be different from $\QQ$, however our results from Sections~\ref{sec:Core} and~\ref{chptr:KS} will apply (almost) verbatim.

Let $\rho$ be a totally even character of $G_F$ (i.e., $\rho$ is trivial on all complex conjugations inside $G_F$) into $\oo^{\times}$ that has finite prime-to-$p$ order, and let $L$ be the fixed field of $\hbox{ker}(\rho)$.  We define $f_{\rho}$ to be the conductor of $\rho$, and set $\Delta:=\hbox{Gal}(L/F)$. We assume that $(p, f_{\chi})=1$ and $\rho(\wp)\neq1$ for any prime $\wp$ of $F$ lying above $p$. Assume also for notational simplicity that $p$ is unramified in $F/\QQ$. Let $T$ be  the $G_F$-representation $\oo(1)\otimes\rho^{-1}$.
\begin{rem}
\label{hypo holds}
$T$ clearly satisfies the hypotheses \hone-\hfour\, ($\QQ$ replaced by $F$). Observe that the following versions of the hypotheses \htam\, and \hsez\, also hold for $T$:

(\htamF)\,\,\,\,\,\,\,\,\,\,\,\,\, $A ^{\mathcal{I}_{F_{\lambda}}}$ is divisible for every $\lambda \nmid p$.

(\hsezF)\,\,\,\,\, $H^0(F_{\wp},T^*) =0$ for  $\wp\mid p$.
\end{rem}
Let $F_{\infty}$ denote the cyclotomic $\ZZ_p$-extension of $F$, and $\Gamma=\Gal(F_{\infty}/F)$. Since we assumed  $p$ is unramified in $F/\QQ$, note that $F_{\infty}/F$ is then totally ramified at all primes $\wp$ over $p$. Let $F_{\wp}$ denote the completion of $F$ at $\wp$, and let  $F_{\wp,\infty}$ denote the cyclotomic $\ZZ_p$-extension of $F_{\wp}$. We may identify $\Gal(F_{\wp,\infty}/F_{\wp})$ with $\Gamma$ for all $\wp|p$ and henceforth $\Gamma$ will stand for any of these Galois groups. Let $\LL=\oo[[\Gamma]]$ as usual.
\subsubsection{Modified Selmer structures on $T\otimes\LL$}
\label{modified selmer}
In~\cite{kbbstark}, the author modifies the classical local conditions at $p$ in order to obtain a Selmer structure $\FF_{\mathcal{L}}$ on $T$ (see~\cite[\S1]{kbbstark}). The objective of this section is to lift the Selmer structure $\FF_{\mathcal{L}}$ to a Selmer structure on $T\otimes\LL$.

Write $H^1_{\textup{Iw}}(F_{\wp},T):=\varprojlim_n H^1(F_{\wp,n},T)$, where $F_{\wp,n}$ denotes the unique subfield of $F_{\wp,\infty}$ which has degree $p^n$ over $F_{\wp}$. Using  Shapiro's Lemma, one may canonically identify $H^i_{\textup{Iw}}(F_{\wp},T)$ with $H^i(F_{\wp},T\otimes\LL)$  for all $i \in \ZZ^+$ ({c.f.}, ~\cite[Lemma 5.3.1]{mr02}). Let
$$H^i_{\textup{Iw}}(F_{p},T):=\oplus_{\wp|p}H^i_{\textup{Iw}}(F_{\wp},T) \,\hbox{  and  }\, H^i(F_{p},T\otimes\LL):=\oplus_{\wp|p} H^i(F_{\wp},T\otimes\LL).$$

As in Appendix~\ref{fontaine}, set $H_K:=\Gal(\overline{K}/K_{\infty})$ for any local field $K$. Recall that $T_m=T\otimes\LL/(\xx^m)$.
\begin{prop}
\label{structure-at-p}
Suppose $\rho(\wp)\neq1$ for any prime $\wp$ of $F$ above $p$. Then:
\begin{enumerate}
\item[\textbf{(i)}] The $\LL$-module $H^1(F_{p},T\otimes\LL)=H^1_{\textup{Iw}}(F_{p},T)$ is free of rank $r$.
\item[\textbf{(ii)}] The map $H^1(F_{p},T\otimes\LL) \ra H^1(F_{p},T_m)$ is surjective for every $m \in \ZZ^+$.
\end{enumerate}
\end{prop}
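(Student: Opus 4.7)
The plan is to derive both parts from the vanishings $H^0(F_p, T\otimes\LL) = 0 = H^2(F_p, T\otimes\LL)$, after which part~(i) reduces to an Auslander--Buchsbaum depth computation. The vanishing of $H^2$ follows from \hsezF: local duality gives $H^2(F_\wp, T) \cong \textup{Hom}((T^*)^{G_{F_\wp}}, \QQ_p/\ZZ_p) = 0$, and since $G_{F_\wp}$ has cohomological dimension two, the surjection $H^2(F_\wp, T\otimes\LL)/(\gamma-1)H^2(F_\wp, T\otimes\LL) \twoheadrightarrow H^2(F_\wp, T) = 0$ combined with Nakayama forces $H^2(F_\wp, T\otimes\LL) = 0$, exactly as in Lemma~\ref{h2-vanishing}. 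For the vanishing of $H^0$, Shapiro's lemma identifies $H^0(F_\wp, T\otimes\LL) = \varprojlim_n T^{G_{F_{\wp,n}}}$, so it suffices to check $T^{I_{F_{\wp,n}}} = 0$ for each $n$; this holds because $\cycl$ is ramified of infinite order on $I_{F_{\wp,n}}$ while $\chi$ is unramified at $\wp$ (since $p\nmid f_\chi$), so the character $\cycl\chi^{-1}$ describing the $I_{F_{\wp,n}}$-action on $T = \ZZ_p(1)\otimes\chi^{-1}$ is nontrivial.

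Part~(ii) is then immediate: the long exact sequence attached to $0 \to T\otimes\LL \stackrel{\xx^m}{\lra} T\otimes\LL \to T_m \to 0$ shows that the cokernel of $H^1(F_p, T\otimes\LL) \to H^1(F_p, T_m)$ embeds into $H^2(F_p, T\otimes\LL)[\xx^m] = 0$.

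For part~(i), set $M := H^1(F_p, T\otimes\LL)$. Since $\LL$ is a regular local ring of Krull dimension two, Auslander--Buchsbaum asserts that the finitely generated $\LL$-module $M$ is free if and only if $\textup{depth}_{\LL} M = 2$; equivalently, I must exhibit a regular sequence of length two on $M$, for which I take $(\gamma-1,\, p)$. The long exact sequence for $0 \to T\otimes\LL \stackrel{\gamma-1}{\lra} T\otimes\LL \to T \to 0$, combined with the two vanishings above and $H^0(F_p, T) = 0$ (the $n=0$ case of the $H^0$-argument), shows simultaneously that $\gamma-1$ acts injectively on $M$ and that $M/(\gamma-1)M \stackrel{\sim}{\lra} H^1(F_p, T)$. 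Applying the same style of argument to $0 \to T \stackrel{p}{\lra} T \to T/pT \to 0$ reduces the non-zero-divisor property of $p$ on $H^1(F_p, T)$ to $H^0(F_p, T/pT) = 0$, which in turn follows by the same inertia argument upon noting that the Teichm\"uller character $\omega = \cycl \bmod p$ is nontrivial on $I_{F_\wp}$ (since $\mu_p \not\subset F_\wp$, as $p$ is unramified in $F/\QQ$) while $\chi \bmod p$ remains unramified. Hence $M$ is free, of $\LL$-rank equal to the $\ZZ_p$-rank of $H^1(F_p, T)$; the local Euler characteristic formula (using $H^0(F_p, T) = H^2(F_p, T) = 0$) evaluates this rank as $\sum_{\wp\mid p}[F_\wp:\QQ_p] = r$.

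The main obstacle is deducing freeness, not merely the correct rank, from the vanishings in hand. Using depth and Auslander--Buchsbaum trades the freeness problem over the two-dimensional ring $\LL$ for the purely homological task of producing a regular sequence, which can be handled one variable at a time via the long exact sequences already available.
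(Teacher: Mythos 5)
Your proof is correct, and part~(ii) is argued exactly as in the paper. For part~(i), however, you take a genuinely different route. The paper deduces freeness from Theorem~A.\ref{cohomology-iwasawa-p} in the appendix, which is a structure theorem for $H^1_{\textup{Iw}}(K,T)$ obtained from Herr's $(\phi,\Gamma)$-module computation of local Galois cohomology (via Fontaine's equivalence): it identifies $H^1_{\textup{Iw}}(K,T)_{\textup{tors}}$ with $T^{H_K}$ and the quotient by torsion as free of rank $[K:\QQ_p]\cdot\textup{rank}_{\ZZ_p}T$, so the whole module is free once the torsion term $T^{H_{F_\wp}}$ vanishes. You instead bypass the $(\phi,\Gamma)$-module machinery entirely: you observe that $\LL$ is regular local of dimension two, produce the regular sequence $(\gamma-1,p)$ on $M=H^1(F_p,T\otimes\LL)$ from the vanishings $H^0(F_\wp,T)=0$, $(T/pT)^{G_{F_\wp}}=0$, and $H^2(F_\wp,T\otimes\LL)=0$ via the obvious long exact sequences, and then invoke Auslander--Buchsbaum to get projective dimension zero. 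Your approach is more elementary and self-contained---it uses only the cohomological dimension of $G_{F_\wp}$, local Tate duality, and homological algebra over $\LL$---at the cost of not yielding the finer description of the torsion submodule that the appendix theorem provides; the rank identification via the local Euler characteristic formula is also a clean finish. One small remark: for the first non-zero-divisor you invoke $H^0(F_p,T\otimes\LL)=0$ as one of ``the two vanishings,'' but the long exact sequence step actually only uses $H^0(F_p,T)=0$ (which you also state); this is cosmetic and does not affect correctness.
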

\begin{proof}
Since we assumed $\rho(\wp)\neq1$ for any $\wp|p$, it follows that $H^0(H_{F_{\wp}}, T)=0$, and thus (i) follows  from Theorem~A.\ref{cohomology-iwasawa-p}. By our assumption on $\rho$, it follows that $H^0(F_{\wp},T^*)=0$, hence, by the proof of Lemma~\ref{h2-vanishing}, also that $H^2(F_{\wp},T\otimes\LL)=0$. But
$$\textup{coker}\{H^1(F_{p},T\otimes\LL) \ra H^1(F_{p},T_m)\}=H^2(F_{p},T\otimes\LL)[\xx^m],$$
  hence (ii) follows.
\end{proof}

We recall the definition of the Selmer structure $\FF_{\LL}$ of~\cite[ Definition 5.3.2]{mr02}. We fix a set $\Sigma(\FF_{\LL})$ of places of $F$ which contains all the places above $p$ and infinite places of $F$, as well as the places at which $T$ is ramified. We then let  $H^1_{\FF_{\LL}}(F_{v},T\otimes\LL)=H^1(F_{v},T\otimes\LL)$  for all $v\in\Sigma(\FF_{\LL})$. By \cite[Lemma 5.3.1]{mr02},  this definition of $\FF_\LL$ does not depend on the choice of $\Sigma(\FF_{\LL})$. It also follows from the proofs of Proposition~\ref{prop:cart} and Proposition~\ref{cart-at-p} that the propagated Selmer structure $\FF_{\LL}$ on the quotients $\{T_m\}$ coincides with $\FFc$, as long as the hypotheses \htamF\, and \hsezF\, hold for $T$.

Fix a free, rank one $\LL$-direct summand $\LLL\newnot{symbol:LLL}\subset H^1(F_{p},T\otimes\LL)$. By Proposition~\ref{structure-at-p}, this also fixes a free, rank one  $\LL/(\xx^m)$-direct summand $\mathcal{L}_m\subset H^1(F_{p},T_m)$. When $m=1$, we denote $\mathcal{L}_1$ simply by $\mathcal{L}$\newnot{symbol:Ll}.

\begin{define}
\label{modified selmer structure}
The $\LLL$-\emph{modified Selmer structure} $\FF_{\LLL}$\newnot{symbol:FFLL} on $T\otimes\LL$ is given by
\begin{itemize}
\item $\Sigma(\FF_{\LLL})=\Sigma(\FF_{\LL})$,
\item $H^1_{\FF_{\LLL}}(F_{v},T\otimes\LL)=H^1_{\FF_{\LL}}(F_{v},T\otimes\LL)$, for $v\nmid p$,
\item $H^1_{\FF_{\LLL}}(F_p,T\otimes\LL)= \LLL\subset H^1(F_p,T\otimes\LL)$.
\end{itemize}
\end{define}
The induced Selmer structure on the quotients $\mathcal{T}_0=\{T_{k,m}\}$ will also be denoted by $\FF_{\LLL}$ (except for the induced Selmer structure on $T$, which we will denote by $\FF_\mathcal{L}$\newnot{symbol:FFLl} for the sake of consistency with~\cite{kbbstark}). Note that the local conditions on $T_{k,m}$ at the primes $\lambda\nmid p$ determined by $\FF_{\LLL}$ coincide with the local conditions determined by $\FF_\LL$, and therefore also by $\FFc$ since  \htamF\, is true.
\begin{prop}
\label{prop:cart-L}
Under the assumptions of Proposition~\ref{structure-at-p},
\begin{enumerate}
\item[\textbf{(i)}] the Selmer structure $\FF_{\LLL}$ is cartesian on $\mathcal{T}_0$ in the sense of Definition~\ref{iwasawa-cartesian},
\item[\textbf{(ii)}] the core Selmer rank $\chi(T,\FF_\mathcal{L})$ of the Selmer structure $\FF_{\mathcal{L}}$ on $T$  is one.
\end{enumerate}
\end{prop}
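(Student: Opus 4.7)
For part (i), the plan is to verify the cartesian conditions \textbf{C1}--\textbf{C3} of Definition~\ref{iwasawa-cartesian} place by place. At archimedean places the local conditions vanish trivially since $p>2$. At finite primes $v\nmid p$, the local condition of $\FF_\LLL$ coincides by construction with that of $\FF_\LL$, and the arguments of Proposition~\ref{prop:cart} (which rely only on \htamF) go through \emph{verbatim} with $F$ replacing $\QQ$ to yield cartesianness at such $v$.

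The substantial work is at primes $\wp\mid p$. Set $M:=H^1(F_p,T\otimes\LL)$; by Proposition~\ref{structure-at-p}(i) it is free of rank $r$ over $\LL$, so I fix a free $\LL$-complement $\LLL'$ and decompose $M=\LLL\oplus\LLL'$. Hypothesis \hsezF\, together with the fact that $G_{F_\wp}$ has cohomological dimension two forces $H^2(F_\wp,T\otimes\LL)=0$ exactly as in Lemma~\ref{h2-vanishing}, and then the argument of Proposition~\ref{cart-at-p} delivers a canonical identification $H^1(F_p,T_{k,m})\cong M/(p^k,\xx^m)M$ for every $k,m\in\ZZ^+$. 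Consequently
\[
H^1_{\FF_\LLL}(F_p,T_{k,m})\,=\,\LLL/(p^k,\xx^m)\LLL
\]
appears as a direct summand in $H^1(F_p,T_{k,m})=\LLL/(p^k,\xx^m)\LLL\oplus\LLL'/(p^k,\xx^m)\LLL'$. Given this decomposition, condition \textbf{C1} is immediate from propagation, while \textbf{C2} and \textbf{C3} reduce to the injectivity of the multiplication maps $\LLL'/(p^k,\xx^m)\LLL'\stackrel{[\xx^s]}{\lra}\LLL'/(p^k,\xx^{m+s})\LLL'$ and $\LLL'/(p^k,\xx^m)\LLL'\stackrel{[p^t]}{\lra}\LLL'/(p^{k+t},\xx^m)\LLL'$, both of which are immediate from the $\LL$-freeness of $\LLL'$.

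For part (ii), a straightforward extension of the core-rank computation of~\cite{mr02} \S5.2 to the totally real base field $F$ of degree $r$ gives
\[
\xx(T,\FFc)\,=\,\sum_{v\mid\infty}\textup{rank}_{\ZZ_p} T^{-,v}\,+\,\textup{corank}_{\ZZ_p}H^0(F_p,T^*),
\]
where $T^{-,v}$ denotes the $(-1)$-eigenspace of the complex conjugation attached to $v$. Since $\chi$ is even, every $c_v$ acts on $T=\ZZ_p(1)\otimes\chi^{-1}$ as $-1$, so each archimedean summand equals $1$; the $p$-adic term vanishes by \hsezF. Thus $\xx(T,\FFc)=r$. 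Replacing $\FFc$ by $\FF_\mathcal{L}$ modifies only the local condition at $p$, whose $\ZZ_p$-rank drops from $\textup{rank}_{\ZZ_p}H^1(F_p,T)=r$ (obtained by reducing Proposition~\ref{structure-at-p}(i) modulo $(p,\xx)$, using \hsezF) to $\textup{rank}_{\ZZ_p}\mathcal{L}=1$, since $\mathcal{L}=\LLL/(p,\xx)\LLL$ is a rank-one direct summand. The change-of-local-conditions formula for core ranks (a formal consequence of global duality, compare Theorem 4.1.5 of~\cite{mr02}) then yields $\xx(T,\FF_\mathcal{L})=r+(1-r)=1$. The main obstacle is the cohomological bookkeeping at primes above $p$ in part (i); once the direct summand structure of $H^1(F_p,T_{k,m})$ is made explicit, the cartesian properties follow essentially formally.
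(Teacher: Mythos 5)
Your proof of part~(i) follows essentially the same route as the paper's: at places away from $p$ you appeal to Proposition~\ref{prop:cart}, and at $p$ you exploit the vanishing $H^2(F_\wp,T\otimes\LL)=0$ (from \hsezF) to realize $H^1(F_p,T_{k,m})$ as $M/(p^k,\xx^m)M$. The only difference is presentational: you work with a fixed free complement $\LLL'$ of $\LLL$ inside $M$, whereas the paper works directly with the quotient $H^1(F_p,T_{k,m})/\Ll_{k,m}$ and shows it is free of rank $[F:\QQ]-1$; these are equivalent ways to encode the same direct-summand structure, and the appeal to \cite{mr02} Lemma 3.7.1 for \textbf{C3} plays the role that the injectivity of multiplication on the free module $\LLL'$ plays in your argument.

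For part~(ii) the paper simply cites Proposition~1.8 of~\cite{kbbstark}, whereas you give a self-contained argument: you compute $\xx(T,\FFc)=r$ from the sum of archimedean contributions (each $1$ since $\chi$ is even and complex conjugation acts by $-1$ on $\ZZ_p(1)$) plus the vanishing $p$-adic term (by \hsezF), then invoke the change-of-local-conditions/Euler-characteristic principle to pass from $\FFc$ to $\FF_\mathcal{L}$. This reasoning is sound, with two small remarks. First, when you conclude $\xx(T,\FF_\mathcal{L})=1$ from the Euler characteristic shift $1-r$, you implicitly use that one of the two core ranks $\xx(T,\FF_\mathcal{L})$, $\xx(T^*,\FF_\mathcal{L}^*)$ vanishes (\cite{mr02} Theorem 4.1.5) so that the difference formula pins down $\xx(T,\FF_\mathcal{L})$ itself; this deserves a line. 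Second, there is a minor slip: $\mathcal{L}$ is defined as a free $\ZZ_p$-summand of $H^1(F_p,T)$, namely $\mathcal{L}=\LLL/\xx\LLL$ (not $\LLL/(p,\xx)\LLL$, which is the mod-$p$ reduction); the rank-one statement you use is correct, but the expression should be adjusted.
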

\begin{proof}
(ii) is ~\cite[Proposition 1.8]{kbbstark}.

Since $\FFc$ and $\FF_{\LLL}$ determine the same local conditions at $v\nmid p$, the Selmer structure $\FF_{\LLL}$ is cartesian at $v\nmid p$ by Proposition~\ref{prop:cart}. Therefore, it remains to check that $\FF_{\LLL}$ is cartesian on $\mathcal{T}_0$ at the prime $p$.

The property $\mathbf{C1}$ holds by the definition of $\FF_{\LLL}$ on $\mathcal{T}_0$, and property $\mathbf{C.3}$ follows easily from~\cite[Lemma 3.7.1]{mr02} (which applies since $H^1(F_p,T_m)/\Ll_m$ is a free $\LL/(\xx^m)$-module for every $m$).

We now verify $\mathbf{C2}$. Let $\Ll_{k,m}$ be the image of $\Ll_m$ under the reduction map
$$H^1(F_p,T_m) \lra H^1(F_p,T_{k,m}).$$
 It is easy to see that the $R_{k,m}$-module $\Ll_{k,m}$ (resp., $H^1(F_p,T_{k,m})/\Ll_{k,m}$) is free of rank one (resp., of rank $r-1$). We need to check that the map
 $$\xymatrix@C=.55in{H^1(F_p,T_{k,m})/\Ll_{k,m} \ar[r]^(.48){[\xx^{M-m}]}& H^1(F_p,T_{k,M})/\Ll_{k,M} }$$
 is injective for all $M\geq m$. But this is evident, since the quotient $H^1(F_p,T_{k,m})/\Ll_{k,m}$ (resp., $H^1(F_p,T_{k,M})/\Ll_{k,M}$) is a free $R_{k,m}$-module (resp., a free $R_{k,M}$-module) of rank $r-1$.
\end{proof}
As a corollary of Proposition~\ref{prop:cart-L} and a trivial extension of Theorem~\ref{main} we obtain:
\begin{thm}
\label{main-stark}
Under the assumptions of Proposition~\ref{structure-at-p},
\begin{itemize}
\item[(i)] the $\LL$-module of $\LL$-adic Kolyvagin Systems $\overline{\KS}(T\otimes\LL,\FF_{\LLL},\PP)$  is free of rank one.
\item[(ii)] the map $\overline{\KS}(T\otimes\LL,\FF_{\LLL},\PP) \ra \overline{\KS}(T,\FF_{\mathcal{L}},\PP)$ is surjective.
\end{itemize}
\end{thm}
\begin{proof}
This is exactly Theorem~\ref{main} with $T=\oo(1)\otimes\rho^{-1}$ and the base field $\QQ$ replaced by $F$. The proof of Theorem~\ref{main} generalizes verbatim to the case when the base field is $F$, under the running hypotheses.
\end{proof}
\subsubsection{Rubin-Stark elements and $\LL$-adic Kolyvagin Systems}
Fix a $\LL$-line $\LLL$ as above. Fix also a finite set $S$ of places of $F$ that does \emph{not} contain any prime above $p$, but contains all the infinite places $S_{\infty}$, all primes $\lambda$ that divide the conductor $f_{\rho}$ of $\rho$. Assume that $|S| \geq r+1$ (where we recall that $r=[F:\QQ]$). Let
$$\kk_0=\{F_n(\tau): \tau \hbox{ is an integral ideal of } F \hbox{ which is prime to } f_{\rho}p\,;\,\, n \in\ZZ_{\geq 0}\}$$
 and let $\kk=\{L \cdot F_n(\tau): F_n(\tau) \in \kk_0\}$. Here, $F_n(\tau)=F_nF(\tau)$, and $F(\tau)$ denotes the maximal pro-$p$ extension of $F$ inside the ray class field of $F$ modulo $\tau$. For each $K \in\kk$, let $S_K=S \cup \{\hbox{places of $F$ at which $K$ is ramified}\}$ be another set of places of $F$. Let $O_{K,S_K}^{\times}$ denote the $S_K$ units of $K$, and $\Delta_K$ ({resp.}, $\delta_K$) denote $\hbox{Gal}(K/k)$ ({resp.}, $|\hbox{Gal}(K/k)|$).  Conjecture $B^{\prime}$ of \cite{ru96} predicts the existence of certain elements
 $$\varepsilon_{K,S_K}\newnot{symbol:ers} \in \Lambda_{K,S_K} \subset \frac{1}{\delta_K}{\wedge^r} O_{K,S_K}^{\times} \subset  \frac{1}{\delta_K}{\wedge^r} H^1(K,\ZZ_p(1))$$
  where $\Lambda_{K,S_K}$ is defined as in \cite[\S 2.1]{ru96}, the exterior products are  taken in the category of $\ZZ_p[\Delta_K]$-modules, and the final inclusion is induced by Kummer theory. Throughout this section, we assume the Rubin-Stark conjecture~\cite[Conjecture $B^{\prime}$]{ru96}.

Using elements of $\varprojlim_{K \in \kk} \bigwedge^{r-1}\hbox{Hom}_{\ZZ_p[\Delta_K]}(H^1(K,\ZZ_p(1)), \ZZ_p[\Delta_K])$  (or the images of the elements of
$$\varprojlim_{K \in \kk} \bigwedge^{r-1}\hbox{Hom}_{\ZZ_p[\Delta_K]}(H^1(K_p,\ZZ_p(1)), \ZZ_p[\Delta_K])$$
 under the canonical map
$$  \varprojlim_{K \in \kk} \bigwedge^{r-1}\hbox{Hom}_{\ZZ_p[\Delta_K]}(H^1(K_p,\ZZ_p(1)), \ZZ_p[\Delta_K])\lra \varprojlim_{K \in \kk} \bigwedge^{r-1}\hbox{Hom}_{\ZZ_p[\Delta_K]}(H^1(K,\ZZ_p(1)), \ZZ_p[\Delta_K])
 $$
 induced from localization at $p$) and the Rubin-Stark elements above, Rubin shows \cite[\S6 ]{ru96} (see also~\cite{pr-es}) how to obtain an Euler system for the $G_F$-representation $\ZZ_p(1)$. One may then obtain an Euler system for $T=\oo(1)\otimes\rho^{-1}$ using a standard twisting argument ({c.f.}, \cite[ \S II.4]{r00} and \cite[\S2.2]{kbbstark}).

In~\cite{kbbstark}, the author explicitly determines a generator for the $\oo$-module $\overline{\KS}(T,\FF_{\mathcal{L}},\PP)$ (which is a free $\oo$-module of rank one by~\cite[Theorem 5.2.10]{mr02}) using an Euler system of Rubin-Stark elements. Let $\pmb{\kappa}^{\textup{Stark}}\newnot{symbol:krs} \in \overline{\KS}(T,\FF_{\mathcal{L}},\PP)$ denote this generator constructed in~\cite{kbbstark} and denoted there by $\{\kappa_{\eta}^{\Phi_0}\}$, where $\Phi_0=\{\phi_0^{(\tau)}\}\in\varprojlim_{\tau} \bigwedge^{r-1}\hbox{Hom}_{\oo[\Delta_{\tau}]}(H^1(F(\tau)_p,T), \oo[\Delta_{\tau}])$ is a distinguished element which plays an important role in loc.cit. Here $\Delta_{\tau}=\Delta_{F(\tau)}$. See~\cite[\S2.3-\S2.4]{kbbstark} for details. We further recall from~\cite{kbbstark} that $\kappa_1^{\Phi_0}=\phi_0^{(1)}(\varepsilon_{L,S_L}^{\chi})$, where $\varepsilon_{L,S_L}^{\rho}$ is the $\rho$-part of the Rubin-Stark element $\varepsilon_{L,S_L}$.

Let $\Gamma_n=\Gal(F_n/F)$ and let $\gamma$ be a fixed generator of $\Gamma$. Let $\mathbb{L}_n$ be the image of $\LLL$ under the (surjective) homomorphism
$$H^1(F_p,T\otimes\LL) \lra H^1(F_p,T\otimes\LL/(\gamma^{p^n}-1)) \cong H^1(F_{n,p},T),$$
 where the last isomorphism follows from Shapiro's Lemma. Using the arguments of~\cite[\S2.3]{kbbstark}, one may choose an element
 $$\Phi^{\infty}=\{\Phi^{(n)}\} \in \varprojlim_{n} \bigwedge^{r-1}\hbox{Hom}_{\oo[\Gamma_n]}(H^1(F_{n,p},T), \oo[\Gamma_n]) $$
with the following properties:
\begin{itemize}
\item $\Phi^{(n)}$ maps $\bigwedge^{r}H^1(F_{n,p},T)$ isomorphically onto $\mathbb{L}_n$ for all $n \in \ZZ^+$ (and therefore $\Phi^{\infty}$ maps $\bigwedge^{r}H^1(F_p,T\otimes\LL)$ isomorphically onto $\LLL$).
\item $\Phi^{(0)}=\phi_0^{(1)}$ in  $\bigwedge^{r-1}\hbox{Hom}_{\oo}(H^1(F_{p},T), \oo)$.
\end{itemize}
Let $L_n=L\cdot F_n$ and let $\varepsilon_{L_n,S_{L_n}}^{\rho}$ be the $\rho$-part of the Rubin-Stark element. Set
$$\varepsilon_{F_{\infty}}^{\textup{Stark}}\newnot{symbol:rsinfty}=\{\varepsilon_{L_n,S_{L_n}}^{\rho}\} \in \varprojlim\bigwedge^{r}H^1(F_n,T)$$
 and $\mathbf{c}^{\textup{Stark}_\infty}_{1}=\Phi^{\infty}(\varepsilon_{F_{\infty}}^{\textup{Stark}}):= \{\Phi^{(n)}(\varepsilon_{L_n,S_{L_n}}^{\rho})\}_n \in\varprojlim_n H^1(F_n,T) = H^1(F,T\otimes\LL)$. Note that, since $\Phi^{(0)}=\phi_0^{(1)}$ by definition, it follows that
 $$\Phi^{(0)}(\varepsilon_{L,S_L}^\rho)=\phi_0^{(1)}(\varepsilon_{L,S_L}^\rho)=\kappa_{1}^{\Phi_0},$$
  i.e., $\kappa_{1}^{\Phi_0}$ is the image of $\mathbf{c}^{\textup{Stark}_\infty}_{1}$ under the projection $H^1(F,T\otimes\LL) \ra H^1(F,T).$

Using methods of Perrin-Riou~\cite{pr-es}, one may show that there is a $\LL$-adic Kolyvagin system
$\pmb{\kappa}^{\textup{pr},\infty}\newnot{symbol:prrs} \in \overline{\KS}(T\otimes\LL,\FFc,\PP)$ such that $$\kappa^{\textup{pr},\infty}_1=\mathbf{c}^{\textup{Stark}_\infty}_{1}\,\,\,\hbox{ and }\,\,\, \pmb{\kappa}^{\textup{pr},\infty} \equiv \pmb{\kappa}^{\textup{Stark}} \hbox{ mod } (\gamma-1),$$ The trouble is that $\pmb{\kappa}^{\textup{pr},\infty}$ does not necessarily belong to $\overline{\KS}(T\otimes\LL,\FF_{\LLL},\PP)$.

On the other hand, Theorem~\ref{main-stark} proves the existence of a $\LL$-adic Kolyvagin system 
$$\pmb{\kappa}^{\infty} \in \overline{\KS}(T\otimes\LL,\FF_{\LLL},\PP)$$
 such that $\pmb{\kappa}^{\infty} \equiv \pmb{\kappa}^{\textup{Stark}} \hbox{ mod }(\gamma-1)$. Let us pause at this point and briefly summarize the situation:
\begin{itemize}
\item The Kolyvagin system $\pmb{\kappa}^{\textup{pr},\infty}$ is constructed in terms of the Rubin-Stark elements $\varepsilon_{F_\infty}^{\textup{Stark}}$, however, $\pmb{\kappa}^{\textup{pr},\infty}$ does not necessarily live in $\overline{\KS}(T\otimes\LL,\FF_{\LLL},\PP)$, but only in $\overline{\KS}(T\otimes\LL,\FFc,\PP)$.
\item The Kolyvagin system $\pmb{\kappa}^{\infty} \in \overline{\KS}(T\otimes\LL,\FF_{\LLL},\PP)$ abstractly lifts the Kolyvagin system $\pmb{\kappa}^{\textup{Stark}}\in \overline{\KS}(T,\FF_{\Ll},\PP)$ which is constructed in~\cite{kbbstark}, but it is not a priori related to the Rubin-Stark elements $\varepsilon_{F_\infty}^{\textup{Stark}}$.
\end{itemize}

It requires to extend the methods of~\cite{kbbstark} to conclude that there is a $\LL$-primitive Kolyvagin system $\pmb{\kappa}^{\textup{Stark},\infty} \in \overline{\KS}(T\otimes\LL,\FF_{\LLL},\PP)$ for which $\kappa^{\textup{Stark},\infty}_1=\mathbf{c}^{\textup{Stark}_\infty}_{1} \in H^1(F,T\otimes\LL)$. This is the subject of a forthcoming paper~\cite{kbb-iwasawa}.

We now illustrate how one utilizes  $\pmb{\kappa}^{\infty} \in \overline{\KS}(T\otimes\LL,\FF_{\LLL},\PP)$ which our Theorem~\ref{main-stark} proves to exist. We assume Leopoldt's conjecture for every $L_n/L$ until the end of this section. This in particular implies that $H^1_{\FF_{\LLL}}(F,T\otimes\LL)$ injects into $\LLL$. We identify $\kappa^{\infty}_1 \in H^1_{\FF_{\LLL}}(F,T\otimes\LL)$  with its image inside $ \LLL$ under this injection.

For any $\oo$-module $M$, let $M^{\vee}:=\textup{Hom}(M,\Phi/\oo)$ denote its Pontryagin dual.
\begin{prop}\label{exact-seq-stark}
The following sequence of $\LL$-modules is exact:
$$ 0\lra \frac{H^1_{\FF_{\LLL}}(F,T\otimes\LL)}{\LL\cdot\kappa^{\infty}_1}\lra \frac{\LLL}{\LL\cdot\kappa^{\infty}_1} \lra
H^1_{\FF_{\textup{str}}^*}(F,(T\otimes\LL)^*)^{\vee} \lra H^1_{\FF_{\LLL}^*}\left(F,(T\otimes\LL)^*\right)^{\vee}\lra 0$$
Here $\FF_{\textup{str}}$ \textup{(}resp., $\FF_{\textup{str}}^*$\textup{)} is the Selmer structure $\FF_{\LL}$ \textup{(}resp., $\FF_{\LL}^*$\textup{)}, but local conditions at $p$ are replaced by the strict \textup{(}resp., relaxed\textup{)} conditions in the sense of~\cite[Definition 1.1.6]{mr02} and Definition~\ref{modified selmer-1} above.
\end{prop}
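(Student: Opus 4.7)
The plan is to derive this sequence from Poitou--Tate global duality by comparing $\FF_{\LLL}$ with the finer Selmer structure $\FF_{\textup{str}}$, and then quotient by $\LL\cdot\kappa^{\textup{stark},\infty}_1$. First I would observe that $\FF_{\textup{str}}$ and $\FF_{\LLL}$ have identical local conditions at every place $v\neq p$, while at $p$ one has $H^1_{\FF_{\textup{str}}}(F_p,T\otimes\LL)=0$ and $H^1_{\FF_{\LLL}}(F_p,T\otimes\LL)=\LLL$. Because the strict and relaxed conditions at $p$ are mutual annihilators under the local Tate pairing, the duality-dual structures satisfy $\FF_{\LLL}^*\subset\FF_{\textup{str}}^*$, with $\FF_{\textup{str}}^*$ being relaxed at $p$ and $\FF_{\LLL}^*$ carrying the orthogonal complement $\LLL^{\perp}$ at $p$.

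Next I would invoke the Poitou--Tate exact sequence associated to the inclusion $\FF_{\textup{str}}\subset\FF_{\LLL}$ (the $\LL$-adic version of \cite{mr02}~Theorem 2.3.4, obtainable via Shapiro's lemma). Since the two structures differ only at $p$, with quotient of local conditions equal to $\LLL/0=\LLL$, this yields
\[
\begin{array}{rl}
0\lra H^1_{\FF_{\textup{str}}}(F,T\otimes\LL)&\lra H^1_{\FF_{\LLL}}(F,T\otimes\LL) \stackrel{\textup{loc}_p}{\lra}\LLL \lra \\
&\lra H^1_{\FF_{\textup{str}}^*}(F,(T\otimes\LL)^*)^{\vee}\lra H^1_{\FF_{\LLL}^*}(F,(T\otimes\LL)^*)^{\vee}\lra 0.
\end{array}
\]
Leopoldt's conjecture, which was assumed just before the proposition precisely to guarantee that $H^1_{\FF_{\LLL}}(F,T\otimes\LL)$ injects into $\LLL$ under $\textup{loc}_p$, forces the vanishing of $H^1_{\FF_{\textup{str}}}(F,T\otimes\LL)$ (which is exactly the kernel of $\textup{loc}_p$). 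What remains is a four-term exact sequence with $H^1_{\FF_{\LLL}}(F,T\otimes\LL)\hookrightarrow\LLL$ on the left.

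Finally, since $\kappa^{\textup{stark},\infty}_1\in H^1_{\FF_{\LLL}}(F,T\otimes\LL)$ and $\textup{loc}_p$ is $\LL$-linear, the cyclic submodule $\LL\cdot\kappa^{\textup{stark},\infty}_1$ embeds compatibly into both $H^1_{\FF_{\LLL}}(F,T\otimes\LL)$ and $\LLL$. A routine application of the snake lemma to the diagram in which the first two terms of the four-term sequence are quotiented by $\LL\cdot\kappa^{\textup{stark},\infty}_1$ (and the last two terms left alone) produces exactly the sequence in the statement. The principal technical obstacle is justifying the Poitou--Tate formalism for the non-standard local condition $\LLL$ at $p$: one must know that $\LLL$ is a direct summand of $H^1(F_p,T\otimes\LL)$ behaving properly under local Tate duality, and that its induced conditions on the tower $\{T_{k,m}\}$ are consistent enough to pass to the inverse limit. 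These ingredients are supplied by Proposition~\ref{structure-at-p} together with the cartesian formalism verified in Proposition~\ref{prop:cart-L}.
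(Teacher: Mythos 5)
Your argument is correct and matches the paper's intent: the paper's terse proof cites Leopoldt's conjecture for the first injection and ``class field theory'' (with references to Rubin's Poitou--Tate duality theorem, \cite{r00}~Theorem~I.7.3, and de Shalit) for the rest, which is precisely the five-term global duality sequence you write down for the pair $\FF_{\textup{str}}\subset\FF_{\LLL}$, followed by the observation that Leopoldt kills $H^1_{\FF_{\textup{str}}}(F,T\otimes\LL)$ and that passing to the quotient by $\LL\cdot\kappa^{\textup{stark},\infty}_1$ preserves exactness. Your version is just a more explicit unpacking of the same approach; the points you flag as the technical obstacles (that $\LLL$ is a free direct summand of $H^1(F_p,T\otimes\LL)$ compatible with local Tate duality, and that the cartesian formalism of Proposition~\ref{prop:cart-L} lets the finite-level Poitou--Tate sequences pass to the $\LL$-adic limit via Shapiro's lemma) are indeed exactly what the paper's earlier Propositions~\ref{structure-at-p} and~\ref{prop:cart-L} are there to supply.
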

\begin{proof}
 First injection follows from Leopoldt's conjecture, and the rest of the sequence is exact by class field theory ({c.f.},~\cite[Theorems I.7.3 and Theorem III.2.10]{r00}, \cite[\S III.1.7]{deshalit} and \cite[Proposition 2.12]{kbb-iwasawa}).
\end{proof}

Since $\pmb{\kappa}^{\infty}$ maps to $\pmb{\kappa}^{\textup{Stark}}\in  \overline{\KS}(T,\FFl,\PP)$ under the map
$$\overline{\KS}(T\otimes\LL,\FF_{\LLL},\PP) \lra \overline{\KS}(T,\FFl,\PP)$$
 by its construction, and since $\pmb{\kappa}^{\textup{Stark}}$ is primitive (see the argument following the proof of~\cite[Theorem 3.11]{kbbstark}), it follows that $\pmb{\kappa}^{\infty}$ is $\LL$-primitive. We therefore conclude using~\cite[Theorem 5.3.10]{mr02} that
 $$\textup{char}\left(H^1_{\FF_{\LLL}}(F,T\otimes\LL)/\LL\cdot\kappa^{\infty}_1\right)=\textup{char}\left(H^1_{\FF_{\LLL}^*}(F,(T\otimes\LL)^*)^{\vee}\right).$$
  This together with Proposition~\ref{exact-seq-stark} gives:
\begin{cor}
\label{first reduction} 
Suppose $\rho(\wp)\neq1$ for any prime $\wp$ of $F$ above $p$. If Leopoldt's conjecture holds for fields $L_n$ \textup{(}with $n\in\ZZ_{\geq0}$\textup{)}, then
$$\textup{char}\left(\LLL/\LL\cdot\kappa^\infty_1\right)=\textup{char}\left((H^1_{\FF_{\textup{str}}^*}(F,(T\otimes\LL)^*)^{\vee}\right).$$
\end{cor}
\begin{rem}
\label{rem:dontneedRS}
Attentive reader will notice that we do not need to know the existence of Rubin-Stark elements for the proofs of Proposition~\ref{exact-seq-stark} and Corollary~\ref{first reduction}, but only the existence of a $\LL$-primitive $\LL$-adic Kolyvagin system $\pmb{\kappa}^\infty$, which we proved above in~Theorem~\ref{main-stark}.
\end{rem}
 Assume that one could choose the Kolyvagin system $\pmb{\kappa}^{\infty}$ so that $\kappa^{\infty}_1=\mathbf{c}^{\textup{Stark}_\infty}_{1} \in H^1(F,T\otimes\LL)$. We prove in~\cite{kbb-iwasawa} that this in fact is possible.

Since $\Phi^{\infty}$ maps $\bigwedge^rH^1(F_p,T\otimes\LL)$ isomorphically onto $\LLL$ (and maps the element $\varepsilon_{F_{\infty}}^{\textup{Stark}} \in \bigwedge^rH^1(F_p,T\otimes\LL)$ to $\mathbf{c}^{\textup{Stark}_\infty}_{1} \in \LLL\subset H^1(F_p,T\otimes\LL)$), it follows that
$$\bigwedge^rH^1(F_p,T\otimes\LL)/\LL\cdot\varepsilon_{F_{\infty}}^{\textup{Stark}} \cong \LLL/\LL\cdot\mathbf{c}^{\textup{Stark}_\infty}_{1}.$$
\begin{cor}
\label{second reduction}
Assume that the Rubin-Stark element $\varepsilon_{F_{\infty}}^{\textup{Stark}}$ above exists. Then, under the assumptions of Corollary~\ref{first reduction}, 
$$\textup{char}\left(\bigwedge^rH^1(F_p,T\otimes\LL)/\LL\cdot\varepsilon_{F_{\infty}}^{\textup{Stark}}\right)=\textup{char}\left(H^1_{\FF_{\textup{str}}^*}(F,(T\otimes\LL)^*)^{\vee}\right).$$
\end{cor}

Using the explicit description of the Galois cohomology groups in question (see \cite[\S I.6.3]{r00}), one may identify  $H^1_{\FF_{\textup{str}}^*}(k,\mathbb{T}^*)^{\vee}$ with $\Gal(M_\infty/L_\infty)^{\rho}$, where $M_\infty$ is the maximal abelian $p$-extension of $L_\infty$ which is unramified outside the primes above $p$. This is the Iwasawa module which is involved in the formulation of  the ``main conjectures" in this setting. Let $\mathcal{L}_{F}^{\rho}$\newnot{symbol:LrhoDR} denote (an appropriate normalization of) the Deligne-Ribet~\cite{deligne-ribet} $p$-adic $L$-function attached to the character $\rho$. As a consequence of the work of Wiles~\cite{wiles-mainconj}, we therefore deduce:
\begin{thm}
\label{conj-kbb}
Under the assumptions of Corollary~\ref{second reduction}, 
$$\mathcal{L}_{F}^{\rho}\cdot\LL=\textup{char}\left(\bigwedge^rH^1(F_p,T\otimes\LL)/\LL\cdot\varepsilon_{F_{\infty}}^{\textup{Stark}}\right).$$
\end{thm}

\begin{rem}
Although Corollary~\ref{second reduction} relies on the Rubin-Stark conjectures, the existence of the $\LL$-adic Kolyvagin systems derived from them is unconditional and is proved in Theorem~\ref{main-stark} above.
\end{rem}
\begin{rem}
Theorem~\ref{conj-kbb} generalizes (assuming the truth of the Rubin-Stark conjecture) a classical theorem of Iwasawa~\cite{iwasawa-1}, which states when $r=1$ (namely, when $k-\QQ$) that the characteristic ideal of the local units modulo the cyclotomic units is generated by the Kubota-Leopoldt $p$-adic $L$-function.  
\end{rem}
\section*{Acknowledgements} This paper is based on the author's Ph.D. thesis. The author would like to express his gratitude to his Ph.D. adviser Karl Rubin, for suggesting the problem, for many insightful discussions, his careful revision of this work and many comments to improve the exposition of this article. The author wishes to thank the mathematics department of Stanford University where he conducted his graduate studies. He also thanks the anonymous referees for numerous corrections, remarks and suggestions to improve the exposition of this article.
\appendix
\section{Local  Iwasawa theory via $(\varphi,\Gamma)$-modules}
\label{fontaine}

In this Appendix, we give an overview of certain results due to Benois, Colmez, Fontaine, Herr, and Perrin-Riou. We use these results to determine the structure of the semi-local cohomology groups $H^1(k_n(\tau)_p,{T})$. This  could have been achieved without appealing to the theory of $(\varphi,\Gamma)$-modules, however, the approach via $(\varphi,\Gamma)$-modules adds more perspective to local Iwasawa theory.

Throughout this Appendix, let $K$ denote a finite extension of $\QQ_p$ and set $\tilde{K}_n:=K(\mu_{p^n})$, $\tk_{\infty}:=\bigcup_{n}\tk_n$.  Define the Galois groups $\tilde{H}_K:=\textup{Gal}(\overline{K}/\tk_{\infty})$ and $\tGamma_K:=G_K/\tH_K=\textup{Gal}(\tk_{\infty}/K)$. Let $\tgamma$ be a topological generator of the pro-cyclic group $\tGamma_K$, and let $\tlambda_K:=\ZZ_p[[\tGamma_K]]$. Let $\tgamma_n$ be a fixed topological generator of $\textup{Gal}(\tk_{\infty}/\tk_n):=\tGamma^{(n)}$ for $n \in \ZZ^+$, which is chosen in a way that $\tgamma_n=\tgamma_1^{p^{\alpha_n}}$, where $\alpha_n \in \ZZ^+$ is such that $[\tk_n:\tk]=p^{\alpha_n}$.

 Let $K_n$ be the maximal $p$-extension of $K$ inside $\tk_n$, and let $\bigcup_n K_n=:K_{\infty} \subset \tk_{\infty}$ be the cyclotomic $\ZZ_p$-extension of $K$. We set $\Gamma_K:=\textup{Gal}(K_{\infty}/K)$ and $\Lambda_K:=\ZZ_p[[\Gamma_K]]$. Note then that
 $$\tGamma_K=W\times \Gamma_K,\hbox{ and that } \tlambda_K=\ZZ_p[W]\otimes_{\ZZ_p}\Lambda_K,$$
 where $W$ is a finite group whose order is prime to $p$. (In fact $W$ can be identified by $\textup{Gal}(K(\mu_p)/K)$.) Let $\gamma$ denote the restriction of $\tgamma$ to $K_{\infty}$; so that the element $\gamma$ is a topological generator of $\Gamma_K$. Let $\gamma_n$ denote the image of $\tgamma_n$ under the natural isomorphism
 $$\textup{Gal}(\tk_{\infty}/\tk_n) \cong \textup{Gal}(K_{\infty}/K_n),$$
 and set $H_K:=\textup{Gal}(\overline{K}/K_{\infty})$ (so that $H_K/\tH_K\cong W$).

 In~\cite{fontaine-phi-gamma}, Fontaine\footnote{However, one should be cautious as Fontaine uses $K_n$ for our $\tk_n$, etc. For example, his $\Gamma_K$ is our $\tGamma_K$.} introduces the notion of a $(\varphi,\Gamma)$-module over a certain period ring which he denotes by $\fontaineO$ (and which is the ring of integers of the field $\widehat{\varepsilon^{\textup{nr}}}$). We set $\mathcal{O}_{\varepsilon(K)}:=(\mathcal{O}_{\widehat{\varepsilon^{\textup{nr}}}})^{H_K}$. We will not include a  detailed discussion of these objects here, we refer the reader to~\cite[A.3.1-3.2]{fontaine-phi-gamma} for the definitions and the basic properties of these rings. Briefly, a $(\varphi,\Gamma)$-module over $\fontaineOK$ is a finitely generated $\fontaineOK$-module with semi-linear continuous and commuting actions of $\varphi$ and $\Gamma:=\tGamma_K$. A $(\varphi,\Gamma)$-module $D$ over $\fontaineOK$ is called \emph{\'etale} if $\varphi(D)$ generates $D$ as an $\fontaineOK$-module.

 Using his theory, Fontaine established an equivalence between the category of $\ZZ_p$-representations of the absolute Galois group $G_K$ of $K$, and the category of \'etale $(\varphi,\Gamma)$-modules over $\fontaineOK$. This equivalence is given by
 $$\begin{array}{rcl}
 T&\longmapsto &D(T):=(\fontaineO \otimes_{\ZZ_p}T)^{H_K}\\
 (\fontaineO \otimes_{\fontaineOK}D)^{\varphi=1}=:T(D)&\longmapsfrom&D
 \end{array}
 $$
 See \cite[A.1.2.4-1.2.6 ]{fontaine-phi-gamma} for details.

Suppose $T$ is any $\ZZ_p[[G_K]]$-module which is free of finite rank over $\ZZ_p$. In~\cite{herr-1}, Herr makes use of the theory of $(\varphi,\Gamma)$-modules to compute the Galois cohomology groups $H^*(K,T)$. One of the benefits of his approach is that the complex he constructs with cohomology $H^*(K,T)$ is quite explicit. This allows one to compute certain local Galois cohomology groups of $p$-adic fields. In~\cite{herr-2}, Herr gives a proof of the local Tate duality, where the local pairing (c.f.,~\cite[\S5]{herr-2})  is explicitly defined in terms of the residues of the differential forms on $\fontaineOK$. The rest of this Appendix is a survey of Herr's results and their applications \cite{benois, colmez} to Iwasawa theory.

In Fontaine's theory of $(\varphi,\Gamma)$-modules, there is an important operator\footnote{This definition makes sense because: \begin{itemize}
\item  $\textup{Tr}_{\widehat{\varepsilon^{\textup{nr}}}/\varphi(\widehat{\varepsilon^{\textup{nr}}})}(\fontaineO) \subset p\fontaineO$,
\item $\varphi$ is injective.
\end{itemize}
}
$$\psi: \fontaineO \lra \fontaineO,$$
$$\psi(x):= \frac{1}{p}\varphi^{-1}(\textup{Tr}_{\widehat{\varepsilon^{\textup{nr}}}/\varphi(\widehat{\varepsilon^{\textup{nr}}})}(x)),$$
which is crucial for what follows. The map $\psi$ is a left inverse of $\varphi$, and its action on $\fontaineO$ commutes with the action of $G_K$. It induces an operator (which we still denote by $\psi$)
$$\psi: D(T) \lra D(T)$$ for any $G_K$-representation $T$.

Let $C_{\psi,\tgamma}$ be the complex
$$\xymatrix{
C_{\psi,\tgamma}: \,0 \ar[r] &D(T) \ar[rr]^(.4){(\psi-1,\tgamma-1)} &&D(T) \oplus D(T) \ar[rr]^(.57){(\tgamma-1) \ominus (\psi-1)}&&D(T)\ar[r] &0
}$$
The main result of~\cite{herr-1} is the following:

\begin{Athm}
\label{herr-main}
The complex $C_{\psi,\tgamma}$ computes the $G_K$-cohomology of $T$:
\begin{enumerate}
\item[\textbf{(i)}] $H^0(K,T)\cong D(T)^{\psi=1, \tgamma=1}$,
\item[\textbf{(ii)}]$H^2(K,T) \cong D(T)/(\psi-1,\tgamma-1)$,
\item[\textbf{(iii)}] There is an exact sequence
$$0 \lra \frac{D(T)^{\psi=1}}{\tgamma-1} \lra H^1(K,T)\lra \left(\frac{D(T)}{\psi-1}\right)^{\tgamma=1} \lra 0.$$
\end{enumerate}
All the isomorphisms and maps that appear above are functorial in $T$ and $K$.
\end{Athm}

\begin{Adefine}
\label{iwasawa-p}
Let
$$H^1_{\tilde{\textup{Iw}}}(K,T):=\varprojlim_{n}H^1(\tk_n,T) \hbox{ and, }$$
$$H^1_{\textup{Iw}}(K,T):=\varprojlim_{n}H^1(K_n,T),$$
 where the inverse limits are taken with respect to the corestriction maps.
\end{Adefine}

\begin{Arem}
\label{K-tilde-vs-K}
Since the order of $W$ is prime to $p$, it follows that
$$H^1_{\textup{Iw}}(K,T) \stackrel{\sim}{\lra}H^1_{\tilde{\textup{Iw}}}(K,T)^W$$
 by the Hochschild-Serre spectral sequence.
\end{Arem}
We now determine the structure of $H^1_{\textup{Iw}}(K,T)$ using Theorem~A.\ref{herr-main}.
\begin{Aprop}
\label{nth-layer-p}
Define $\tau_n:=1+\tgamma_{n-1}+ \dots + \tgamma_{n-1}^{p-1} \in \ZZ_p[[\tGamma_K]]$. There is a commutative diagram with exact rows:
$$\xymatrix@R=.2in{C_{\psi,\tgamma_n}(\tk_n, T): 0 \ar @{.>}[d]^{\tau_n^{*}} \ar[r] &D(T) \ar[r]\ar[d]^{\tau_n}& D(T) \oplus D(T) \ar[r] \ar[d]^{\tau_n \oplus \textup{id}}& D(T) \ar[r]\ar[d]^{\textup{id}}& 0\\
C_{\psi,\tgamma_{n-1}}(\tk_{n-1}, T): 0 \ar[r] &D(T) \ar[r]& D(T) \oplus D(T) \ar[r]& D(T) \ar[r]& 0
}$$
Furthermore, the map induced from the morphism $\tau_n^*$ on the cohomology of $C_{\psi,\tgamma_n}(\tk_n, T)$ coincides with the corestriction map under Herr's identification $H^*(C_{\psi,\tgamma_n}(\tk_n, T)) \cong H^*(\tk_n,T)$ of Theorem~A.\ref{herr-main}.
\end{Aprop}

\begin{proof}
This follows from the fact that $\tau_n^*$ is a cohomological functor and induces $\textup{Tr}_{\tk_n/\tk_{n-1}}$ on $H^0$, hence it induces corestrictions on $H^i$.
\end{proof}

Using Proposition~A.\ref{nth-layer-p}, one may compute $H^*_{\tilde{\textup{Iw}}}(K,T)$:
\begin{Athm}
\label{cohomology-iwasawa-p-tilde}
\begin{enumerate}
\item[\textbf{(i)}] $H^i_{\tilde{\textup{Iw}}}(K,T)=0$ if $i \neq 1,2$,
\item[\textbf{(ii)}] $H^1_{\tilde{\textup{Iw}}}(K,T)\stackrel{\sim}{\lra} D(T)^{\psi=1}$,
\item[\textbf{(iii)}] $H^2_{\tilde{\textup{Iw}}}(K,T) \stackrel{\sim}{\lra} D(T)/(\psi-1)$.
\end{enumerate}

\end{Athm}
See~\cite[\S{II.3}]{colmez} for a proof of this theorem.
\begin{Arem}
\label{rem:p-adic L-function}
The isomorphism $\exp^*: H^1_{\tilde{\textup{Iw}}}(K,T)\stackrel{\sim}{\lra} D(T)^{\psi=1}$ of Theorem~A.\ref{cohomology-iwasawa-p-tilde}(ii) can be considered as a vast generalization of Coleman's map~\cite{coleman}. The isomorphism  $\exp^*$ conjecturally gives rise to the (conjectural) $p$-adic $L$-function attached to $T$. This viewpoint we gain is one of the important benefits of using the theory of $(\varphi,\Gamma)$-modules to compute Galois cohomology.
\end{Arem}

Let $\mathcal{C}(T):=(\varphi-1)D(T)^{\psi=1}$. Since $\psi$ is a left inverse of $\varphi$, it follows that
$$\ker\{D(T)^{\psi=1} \stackrel{\varphi-1}{\lra} \mathcal{C}(T)\}=D(T)^{\varphi=1},$$
 hence we have an exact sequence:
\be
\label{phi-psi-sequence}
0 \lra D(T)^{\varphi=1} \lra D(T)^{\psi=1} \stackrel{\varphi-1}{\lra}\mathcal{C}(T) \lra 0.
\ee

Using techniques from the theory of $(\varphi,\Gamma)$-modules, one can determine the structure of $\mathcal{C}(T)$:
\begin{Aprop}
\label{free-C}
The  $\tilde{\Lambda}_K$-module $\mathcal{C}(T)$ is free of rank $[K:\QQ_p] \cdot\textup{rank}_{\ZZ_p}T$.
\end{Aprop}

One may also check that $D(T)^{\varphi=1}\cong T^{\tH_K}$. In particular, $D(T)^{\varphi=1}$ is finitely generated over $\ZZ_p$, hence is a torsion $\ZZ_p[[\tGamma_K]]$-module. Thus, it follows from Proposition~A.\ref{free-C} and Theorem~A.\ref{cohomology-iwasawa-p-tilde} that $D(T)^{\varphi=1}=H^1_{\tilde{\textup{Iw}}}(K,T)_{\textup{tors}}$, the torsion submodule of $H^1_{\tilde{\textup{Iw}}}(K,T)$.

If we now take the $W$-invariance of the exact sequence~(\ref{phi-psi-sequence}) (and use the fact that taking $W$-invariance is an exact functor) and  apply Remark~A.\ref{K-tilde-vs-K} along with Theorem~A.\ref{cohomology-iwasawa-p-tilde} and Proposition~A.\ref{free-C}, we see that:

\begin{Athm}\textup{(Perrin-Riou, see~\cite{colmez} for the proof we sketch in this Appendix)}
\label{cohomology-iwasawa-p}
\begin{enumerate}
\item[\textbf{(i)}] For the $\LL_K$-torsion submodule $H^1_{\textup{Iw}}(K,T)_{\textup{tors}}$ of $H^1_{\textup{Iw}}(K,T)$, we have $H^1_{\textup{Iw}}(K,T)_{\textup{tors}}\cong T^{H_K}$.
\item[\textbf{(ii)}] The  $\LL_K$-module $H^1_{\textup{Iw}}(K,T)/H^1_{\textup{Iw}}(K,T)_{\textup{tors}}$ is free of rank $[K:\QQ_p]\cdot \textup{rank}_{\ZZ_p}T$.
\end{enumerate}
\end{Athm}

\bibliographystyle{alpha}
\bibliography{biblio-tez}

\section*{List of Symbols\hfill} 

\end{document}